\documentclass[12pt,a4paper]{amsart}
\usepackage{a4wide}
\usepackage{amsmath, amssymb, amsfonts,enumerate}
\usepackage[all]{xy}
\usepackage{amscd}
\usepackage{comment}
\usepackage{mathtools}
\usepackage{tikz-cd} 
\usepackage{url}

\newtheorem{theorem}{Theorem}[section]
\newtheorem{lemma}[theorem]{Lemma}
\newtheorem{corollary}[theorem]{Corollary}
\newtheorem{proposition}[theorem]{Proposition}

 \theoremstyle{definition}
 \newtheorem{definition}[theorem]{Definition}
 \newtheorem{remark}[theorem]{Remark}

 \newtheorem{example}[theorem]{Example}

\newtheorem{question}{Question}

\numberwithin{equation}{section}
\newcommand {\K}{\mathbb{K}} 
\newcommand {\N}{\mathbb{N}} 
\newcommand {\Z}{\mathbb{Z}} 
\newcommand {\R}{\mathbb{R}} 
\newcommand {\C}{\mathbb{C}} 


\newcommand{\CC}{\mathcal{C}}

\newcommand{\FF}{\mathcal{F}}

\newcommand{\OO}{\mathcal{O}}

\newcommand{\A}{\mathbb{A}} 
\newcommand{\Proj}{\mathbb{P}}


\DeclareMathOperator{\mdim}{mdim}

\DeclareMathOperator{\im}{Im}

\DeclareMathOperator{\Id}{Id}

\DeclareMathOperator{\Spec}{Spec}

\begin{document}
\title[On the Garden of Eden theorem]{On the Garden of Eden theorem for endomorphisms of symbolic algebraic varieties}
\author[T.Ceccherini-Silberstein]{Tullio Ceccherini-Silberstein}
\address{Universit\`a del Sannio, I-82100 Benevento, Italy}
\email{tullio.cs@sbai.uniroma1.it}
\author[M.Coornaert]{Michel Coornaert}
\address{Universit\'e de Strasbourg, CNRS, IRMA UMR 7501, F-67000 Strasbourg, France}
\email{michel.coornaert@math.unistra.fr}
\author[X.K.Phung]{Xuan Kien Phung}
\address{Universit\'e de Strasbourg, CNRS, IRMA UMR 7501, F-67000 Strasbourg, France}
\email{phung@math.unistra.fr}
\subjclass[2010]{37B15, 14A10, 14A15, 37B10, 43A07, 68Q80}
\keywords{algebraic cellular automaton, complete algebraic variety, amenable group, Krull dimension, algebraic mean dimension, pre-injectivity,  Garden of Eden theorem} 
\begin{abstract}
Let $G$ be an amenable group  and let $X$ be an irreducible complete algebraic variety over an algebraically closed field $K$. 
Let $A$ denote the set of $K$-points of $X$ and let
$\tau \colon A^G \to A^G$ be an algebraic cellular automaton over $(G,X,K)$, 
that is, a cellular automaton over the group $G$ and the alphabet $A$ whose local defining map is 
induced by a morphism of $K$-algebraic varieties. 
We introduce a weak notion of pre-injectivity for algebraic cellular automata, namely $(*)$-pre-injectivity, and prove that $\tau$  is surjective
if and only if it is $(*)$-pre-injective. 
In particular, $\tau$ has the Myhill property, i.e., 
is surjective whenever it is pre-injective.
Our result gives a positive answer to a question raised by Gromov in~\cite{gromov-esav} 
and yields an analogue of the classical Moore-Myhill Garden of Eden theorem. 

\end{abstract}
\date{\today}
\maketitle

\setcounter{tocdepth}{1}
\tableofcontents

\section{Introduction}

In~\cite{gromov-esav}, Gromov brought out fascinating  connections between algebraic geometry and symbolic dynamics.
At the end of~\cite{gromov-esav}, he asked the following:
\begin{quote}
8.J. Question. Does the Garden of Eden theorem generalize to the 
proalgebraic category?
First, one asks if pre-injective  $\implies$  surjective, while the reverse implication 
needs further modification of definitions.
\end{quote}
Our goal here is to present some positive answers to Gromov's question.
Before stating our main results,
we need to recall a few facts related to the classical Garden of Eden theorem, symbolic dynamics, and algebraic geometry
(see Section~\ref{sec:background} for more details and  references).    
\par
Fix a set $A$, called the \emph{alphabet},  and a group  $G$, called the \emph{universe}.
The set   $A^G \coloneqq  \{c \colon G \to A\}$,
consisting of all maps from $G$ to $A$, is called the set of \emph{configurations}. 
Equip $A^G$ with the $G$-\emph{shift}, i.e.,
the  action of  $G$  
defined by
the map $G \times A^G \to A^G$, $(g,c) \mapsto g c$, 
where $(gc)(h) \coloneqq  c(g^{-1}h)$ for all 
$g,h \in G$ and $c \in A^G$.
\par
Given a configuration $c \in A^G$ and a subset $\Omega \subset G$, we write $c\vert_\Omega$ for the restriction of $c$ to $\Omega$, i.e., the element $c\vert_\Omega \in A^\Omega$ defined by
$c\vert_\Omega(g) \coloneqq  c(g)$ for all $g \in \Omega$.
\par
A \emph{cellular automaton} over  the group $G$ and the alphabet $A$ is a map
$\tau \colon A^G \to A^G$ satisfying the following property:
there exist a finite subset $M \subset G$
and a map $\mu \colon A^M \to A$ such that 
\begin{equation} 
\label{e:local-property}
(\tau(c))(g) = \mu((g^{-1} c )\vert_M)  \quad  \text{for all } c \in A^G \text{ and } g \in G.
\end{equation}
Such a set $M$ is then called a \emph{memory set} of $\tau$ and $\mu$ is called the associated \emph{local defining map}
(see~\cite{ca-and-groups-springer}).
Note that it immediately follows from~\eqref{e:local-property} that every cellular automaton
$\tau \colon A^G \to A^G$ is $G$-\emph{equivariant}, i.e., satisfies
$\tau(g c) = g \tau(c)$ for all $c \in A^G$ and $g \in G$,
and continuous with respect to the prodiscrete topology, that is, the product topology on $A^G$ obtained by taking the discrete topology on each factor $A$ of $A^G$.   
 \par
Two configurations $c_1,c_2  \in A^G$ are said to be \emph{almost equal} if  the set  $\{g \in G : c_1(g) \not= c_2(g) \}$ is finite.
 A cellular automaton $\tau \colon A^G \to A^G$ is called \emph{pre-injective} if
 $\tau(c_1) = \tau(c_2)$ implies $c_1 = c_2$ whenever $c_1, c_2 \in A^G$ are almost equal.
 \par
 In 1963, Myhill~\cite{myhill} proved that if $A$ is a finite set and $G = \Z^d$ 
 ($d \in \N$), then every pre-injective cellular automaton
 $\tau \colon A^G \to A^G$ is surjective.
 Together with the converse implication, which had been established shortly before by Moore~\cite{moore}, this yields the celebrated Garden of Eden theorem of Moore and Myhill stating that a cellular automaton with finite alphabet over the group $\Z^d$ is pre-injective if and only if it is surjective.
 The Garden of Eden theorem was subsequently extended to cellular automata with finite alphabet over amenable groups in~\cite{ceccherini}.
 There is also a linear version of the Garden of Eden theorem.
 More precisely, it is shown 
  in~\cite{csc-garden-linear} (see also \cite[Theorem~8.9.6]{ca-and-groups-springer}) that if $A$ is a finite-dimensional vector space over a field $K$ and $G$ is an amenable group,
  then a $K$-linear cellular automaton $\tau \colon A^G \to A^G$ is pre-injective if and only if it is surjective.
 \par
Consider now an algebraic variety $X$ over a field $K$,
i.e., a scheme of finite type over $K$, 
and let $A \coloneqq X(K)$ denote the set of $K$-points of $X$, that is,
the set consisting of all $K$-scheme morphisms $\Spec(K) \to X$.
We say that a cellular automaton $\tau \colon A^G \to A^G$ is an \emph{algebraic cellular automaton}
over $(G,X,K)$
if $\tau$ admits a memory set $M$ 
with local defining map $\mu \colon A^M \to A$
such that $\mu$ is induced by some $K$-scheme morphism 
$f \colon X^M \to X$,
where $X^M$ denotes the $K$-fibered product of a family of copies of $X$ indexed by $M$
(cf.~Definition~1.1 in~\cite{cscp-alg-ca}).
\par
In the present paper, we shall first establish a version of the 
Myhill part of the Garden of Eden theorem for certain algebraic cellular automata.
This yields a positive answer to the first part of Gromov's question.
More specifically, we shall prove the following result (cf. Theorem~\ref{t:goe-alg} for a more general statement). 

\begin{theorem}
\label{t:main-theorem-1}
Let $G$ be an amenable group and let $X$ be an irreducible complete algebraic variety over an algebraically  closed field $K$. 
Let $A \coloneqq X(K)$ denote the set of $K$-points of $X$.
Then every pre-injective  algebraic cellular automaton $\tau \colon A^G \to A^G$
over $(G,X,K)$ is surjective.
\end{theorem}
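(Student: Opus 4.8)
The plan is to transpose the classical Moore--Myhill entropy argument to the algebraic setting, with a Krull-dimensional algebraic mean dimension $\mdim$ replacing topological entropy and completeness of $X$ guaranteeing that all the relevant images are closed subvarieties. Write $d := \dim X$ and, for a finite $\Omega \subseteq G$, set $\Omega^+ := \Omega M$, the finite set on which $\tau(c)|_\Omega$ depends in view of~\eqref{e:local-property}. The local maps then assemble into a $K$-morphism $f^{(\Omega)} \colon X^{\Omega^+} \to X^\Omega$ with $\tau(A^G)|_\Omega = \im f^{(\Omega)}(K)$. Since $X$ is complete, $X^{\Omega^+}$ is proper and $X^\Omega$ separated over $K$, so $f^{(\Omega)}$ is a closed map; hence $\tau(A^G)|_\Omega$ is the set of $K$-points of a closed subvariety $Z_\Omega \subseteq X^\Omega$, and I set $D(\Omega) := \dim Z_\Omega$. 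The same properness, via a Noetherian inverse-limit argument over the complete varieties $Z_\Omega$ with their proper transition maps, shows that $\tau$ is surjective if and only if $Z_\Omega = X^\Omega$ for every finite $\Omega$.

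From $Z_{\Omega_1 \sqcup \Omega_2} \hookrightarrow Z_{\Omega_1} \times Z_{\Omega_2}$ one reads off that $D$ is subadditive, and $G$-equivariance of $\tau$ makes it invariant, so the Ornstein--Weiss lemma produces the algebraic mean dimension $\mdim(\tau(A^G)) := \lim_n D(F_n)/|F_n|$ along any F{\o}lner sequence $(F_n)$, independently of the sequence. Since $\mdim(A^G) = d$, it suffices to prove the two inequalities (i) $\tau$ pre-injective $\implies \mdim(\tau(A^G)) = d$, and (ii) $\tau$ not surjective $\implies \mdim(\tau(A^G)) < d$: together they force a pre-injective non-surjective $\tau$ to satisfy $d = \mdim(\tau(A^G)) < d$, which is absurd.

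For (i), fix $F := F_n$ and a reference configuration, and restrict $\tau$ to the slice of configurations agreeing with it off $F$; since $\tau(c)|_{G \setminus F^+}$ is already determined off $F$, two such configurations have equal image precisely when they agree on $F^+$, so pre-injectivity says exactly that the induced $K$-morphism $X^F \to X^{F^+}$ is injective on $K$-points. Over $K = \overline{K}$ an injective finite-type morphism has singleton fibres, so by the fibre-dimension theorem its image has dimension $\dim X^F = d|F|$; as this image lies in $Z_{F^+}$ we get $D(F^+) \ge d|F|$, and dividing by $|F^+|$ with $|F^+|/|F| \to 1$ gives $\mdim(\tau(A^G)) \ge d$. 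For (ii), non-surjectivity and the structural remark yield a finite $E$ with $Z_E \subsetneq X^E$; as $X$ is irreducible and $K = \overline{K}$, $X^E$ is irreducible of dimension $d|E|$, forcing $\dim Z_E \le d|E| - 1$, and equivariance gives $Z_{gE} = g \cdot Z_E$ with the same bound. A greedy packing, using amenability, of $N \ge c|F_n|$ pairwise disjoint translates $g_iE \subseteq F_n$ (with $c > 0$ fixed) yields $Z_{F_n} \subseteq (\prod_i Z_{g_iE}) \times X^{F_n \setminus \bigcup_i g_iE}$, hence $D(F_n) \le d|F_n| - N \le d|F_n| - c|F_n|$ and $\mdim(\tau(A^G)) \le d - c < d$.

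The main obstacle is step (i): one must check that pre-injectivity, a priori a purely combinatorial statement about almost-equal configurations, translates exactly into injectivity on $K$-points of the slice morphism $X^F \to X^{F^+}$, and that injectivity of a finite-type morphism over an algebraically closed field really does preserve dimension, i.e.\ that the generic fibre is a single reduced point. Nearly as delicate is the structural input from completeness underlying both the closedness of $Z_\Omega$ and the reduction of surjectivity to the finite windows: without properness the image of $\tau$ need not be closed and the inverse-limit argument collapses, which is precisely why the irreducible complete hypothesis on $X$ cannot be dropped.
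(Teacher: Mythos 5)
Your proposal is correct and follows essentially the same route as the paper: a Krull-dimension algebraic mean dimension, the closed-image/properness argument reducing surjectivity to the finite windows (Theorem~\ref{t:sur-dim} together with the tiling Lemma~\ref{l:inv-mdim-non-max}), and the fibre-dimension theorem applied to the slice morphism $X^F \to X^{F^+}$ to show pre-injectivity forces full mean dimension (Propositions~\ref{p:pre-injectivity-*} and~\ref{p:**-implies-mdim}). The only differences are organizational: the paper works with a $\limsup$ rather than invoking Ornstein--Weiss, and routes the argument through the intermediate notion of $(**)$-pre-injectivity, which it needs anyway for the converse (Moore) direction.
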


As injectivity trivially implies pre-injectivity,  
an immediate consequence of Theorem~\ref{t:main-theorem-1} is the following.

\begin{corollary}
\label{c:main-theorem-inj}
Let $X$ be an irreducible  complete algebraic variety over an algebraically  closed field $K$ and let $G$ be an amenable group. 
Let $A \coloneqq X(K)$ denote the set of $K$-points of $X$.
Then every injective  algebraic cellular automaton $\tau \colon A^G \to A^G$
over $(G,X,K)$ is surjective.
\end{corollary}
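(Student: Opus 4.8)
The plan is to deduce this directly from Theorem~\ref{t:main-theorem-1}, so the only point requiring verification is the elementary implication that injectivity entails pre-injectivity. First I would recall the two definitions side by side: the map $\tau$ is injective if $\tau(c_1) = \tau(c_2)$ forces $c_1 = c_2$ for \emph{all} configurations $c_1, c_2 \in A^G$, whereas $\tau$ is pre-injective if the same conclusion is demanded only under the additional hypothesis that $c_1$ and $c_2$ are almost equal. Since pre-injectivity is a statement about a restricted class of pairs, it is logically weaker than injectivity, and any injective map satisfies it automatically. Concretely: suppose $c_1, c_2 \in A^G$ are almost equal and $\tau(c_1) = \tau(c_2)$; then injectivity of $\tau$ gives $c_1 = c_2$, which is exactly the conclusion that pre-injectivity requires. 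Hence every injective algebraic cellular automaton is in particular pre-injective.

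Having established that $\tau$ is pre-injective, I would then invoke Theorem~\ref{t:main-theorem-1}. The standing hypotheses of the corollary --- namely $G$ amenable, $X$ an irreducible complete algebraic variety over an algebraically closed field $K$, and $A = X(K)$ --- are precisely the hypotheses of that theorem, and $\tau$ is an algebraic cellular automaton over $(G,X,K)$ by assumption. The theorem then yields the surjectivity of $\tau$ at once. There is no genuine obstacle at this stage: the entire mathematical content resides in Theorem~\ref{t:main-theorem-1}, and the corollary is a purely formal consequence of the trivial observation that injectivity is stronger than pre-injectivity. Accordingly, I would present the argument in a single short sentence rather than a structured proof.
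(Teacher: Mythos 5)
Your proposal is correct and coincides with the paper's own argument: the corollary is stated there as an immediate consequence of Theorem~\ref{t:main-theorem-1}, justified by exactly the observation that injectivity trivially implies pre-injectivity. Nothing further is needed.
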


It is shown in \cite[Theorem~1.2]{cscp-alg-ca}
that if $X$ is a complete (possibly not irreducible) algebraic variety over an algebraically closed field $K$ and $G$ is a locally residually finite group, then 
every injective algebraic cellular automaton  
over $(G,X,K)$ is surjective.
Therefore, Corollary~\ref{c:main-theorem-inj} remains true if the hypothesis that 
$G$ is \emph{amenable} is replaced by the hypothesis that $G$ is \emph{locally residually finite}.
We shall see in Example~\ref{ex:free-not-myhill} that if $G$ is a free group on two generators, then, given any algebraically closed field $K$, there exist  an irreducible  complete $K$-algebraic variety $X$ and 
an algebraic cellular automaton  over $(G,X,K)$ that is pre-injective but not surjective. As a free group on two generators is residually finite, we deduce  that 
Theorem~\ref{t:main-theorem-1} becomes false if \emph{amenable} is replaced by  
\emph{residually finite} in its hypotheses.
\par
Let us note that, as implicitly stated  in Gromov's question,  the converse implication, i.e., the analogue of the Moore implication, does not hold under the hypotheses of 
Theorem~\ref{t:main-theorem-1}.
For example, if $K$ is an  algebraically closed field  whose characteristic is not equal to $2$,
the projective line $\Proj_K^1$ is an irreducible complete $K$-algebraic variety
and the morphism $f \colon \Proj_K^1 \to \Proj_K^1$ given by $(x:y) \mapsto (x^2:y^2)$ is surjective but not injective. Taking $A \coloneqq \Proj_K^1(K) $, 
we deduce  that, for any group $G$,
the map $\tau \colon A^G \to A^G$ defined by 
$(\tau(c))(g) \coloneqq  f(c(g))$ for all $c \in A^G$ and $g \in G$,
is an algebraic cellular automaton over $(G,X,K)$ that is surjective but not 
pre-injective.     
\par
In order to formulate a version of the Garden of Eden theorem for algebraic cellular automata, we introduce a weak notion of pre-injectivity for them, namely $(*)$-\emph{pre-injectivity}
(see Definition~\ref{def:*-**} below).
We shall prove that Theorem~\ref{t:main-theorem-1}
 remains valid if we replace the hypothesis that $\tau$ is pre-injective by the weaker hypothesis that 
 $\tau$ is $(*)$-pre-injective. 
This weak form of pre-injectivity also allows us to establish a version of the Moore part of the Garden of Eden theorem for algebraic cellular automata. 

\begin{theorem}
\label{t:main-theorem-3}
Let $G$ be an amenable group and let $X$ be an irreducible algebraic variety over an algebraically closed field $K$. 
Let $A \coloneqq X(K)$ denote the set of $K$-points of $X$. 
Then every surjective algebraic cellular automaton $\tau \colon A^G \to A^G$
over $(G,X,K)$ is $(*)$-pre-injective.
\end{theorem}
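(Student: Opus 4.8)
The plan is to prove the contrapositive by means of the algebraic mean dimension $\mdim$: assuming that $\tau$ fails to be $(*)$-pre-injective, I would show that the image $\tau(A^G)$ has algebraic mean dimension strictly smaller than $\dim X$, whereas the full shift satisfies $\mdim(A^G) = \dim X$. Since $\mdim$ is monotone under inclusion of algebraic subshifts, this forces $\tau(A^G) \neq A^G$, i.e.\ $\tau$ is not surjective. Throughout, write $d \coloneqq \dim X$, fix a F\o{}lner net $(F_j)$ for the amenable group $G$, let $M$ be a memory set of $\tau$ with defining morphism $f \colon X^M \to X$, and for a finite $\Omega \subset G$ let $\pi_\Omega \colon A^G \to A^\Omega$ denote the restriction map.

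First I would extract a quantitative local defect from the failure of $(*)$-pre-injectivity. By Definition~\ref{def:*-**}, the negation of $(*)$-pre-injectivity furnishes a finite subset $E \subset G$ together with a positive-dimensional family of configurations that pairwise agree outside $E$ and share a common image under $\tau$. Concretely, this says that the fiber of the locally induced morphism over a generic output pattern, restricted to inputs varying only on $E$, has Krull dimension at least some $\delta > 0$. The point of working with $(*)$-pre-injectivity rather than ordinary pre-injectivity is precisely that its failure yields a collapse of positive dimension $\delta$, not merely a pair of distinct configurations, which would give a $0$-dimensional and hence negligible defect.

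Next I would propagate this defect using the amenability of $G$. For each $j$, pack $F_j$ with pairwise disjoint right-translates $gE$ whose $M$-thickenings $gEM$ remain inside $F_j$; a standard packing argument in amenable groups yields at least $\alpha' |F_j|$ such interior translates, for a constant $\alpha' > 0$ bounded away from $0$ along the net. On each translate the input may be varied independently within its $\delta$-dimensional collapsing family, and because the translates are disjoint the corresponding fibers multiply: the generic fiber of the morphism $X^{F_j M} \to X^{F_j}$ inducing $\pi_{F_j} \circ \tau$ contains a product of these families, and so has dimension at least $\delta \alpha' |F_j|$. By the fiber-dimension theorem for dominant morphisms, together with $\dim X^{F_j M} = d\,|F_j M|$ and the F\o{}lner estimate $|F_j M| = |F_j| + o(|F_j|)$, I obtain
\begin{equation*}
\dim\bigl(\pi_{F_j}(\tau(A^G))\bigr) \le d\,|F_j M| - \delta \alpha' |F_j| = (d - \delta\alpha')\,|F_j| + o(|F_j|).
\end{equation*}
Dividing by $|F_j|$ and passing to the limit gives $\mdim(\tau(A^G)) \le d - \delta\alpha' < d = \mdim(A^G)$, contradicting surjectivity. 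Note that completeness of $X$ is not needed here, since we only bound the image dimension from above.

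The main obstacle I anticipate is the additivity step: justifying that the local collapses on disjoint translates contribute independently to the generic fiber dimension, so that the per-site defects genuinely add up rather than interfere. This rests on the product structure of $X^{F_j}$ together with a careful choice of translates whose memory-thickened supports are disjoint and interior to $F_j$, so that the defect on one translate is insensitive to the input values governing the others. A secondary technical point is to ensure that the packing density $\alpha'$ stays uniformly positive along the F\o{}lner net and that the boundary contribution from the memory set is absorbed into the $o(|F_j|)$ term; both follow from the F\o{}lner condition.
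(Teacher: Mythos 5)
Your proposal is correct in its top-level strategy, and that strategy is in fact the same as the paper's: the paper also derives the theorem from ``surjective $\Rightarrow \mdim_\FF(\tau(A^G))=\dim(X)$'' (Proposition~\ref{p:prop-faciles-mdim}.(i)) together with ``$\mdim_\FF(\tau(A^G))=\dim(X)$ and $X$ irreducible $\Rightarrow$ $(*)$-pre-injective'' (Proposition~\ref{p:mdim-implies-*-pre}). Where you genuinely diverge is in the proof of the second implication. The paper never estimates fibers: starting from $\tau((A^E)_p)=\tau(H_p)$ with $H\subsetneq A^E$ closed, it takes an $(E,E')$-tiling $T$, forms $\Gamma\coloneqq A^{G\setminus TE}\times\prod_{t\in T}H_t$, proves $\tau(A^G)=\tau(\Gamma)$ by a Zorn's-lemma exhaustion that replaces the pattern on one tile at a time by a pattern in $H_t$ without changing the image, and then concludes via $\mdim_\FF(\tau(A^G))\le\mdim_\FF(\Gamma)<\dim(X)$ (Proposition~\ref{p:aut-decroit-mdim} and Lemma~\ref{l:inv-mdim-non-max}). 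You instead bound the fibers of the induced morphisms $X^{F_jM}\to X^{F_j}$ from below and apply the fiber-dimension theorem. Your route can be completed, but two points need tightening. First, the local defect must hold over \emph{every} output pattern and \emph{every} exterior configuration, not merely generic ones: the restriction of a generic global output to a tile, with the exterior data coming from the other tiles, need not be generic for the local map, so your ``over a generic output pattern'' formulation is too weak for the multiplication step. The repair is Proposition~\ref{p:dim-fiber}.(ii): since $A^E$ is irreducible (this is where irreducibility of $X$ and $K=\overline{K}$ enter, giving $\dim(H)<\dim(A^E)$ by Proposition~\ref{p:prod-alg-var-alg-closed}.(iv) and Proposition~\ref{p:dim-subsets}.(ii)), \emph{every} nonempty fiber of $A^E\to\overline{\tau^+_{EM}(A^E\times\{q\})}$ over a closed point has dimension at least $1$, for every $q$. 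Second, disjointness of the translates $gE$ alone does not give independence of the local collapses; you need $g_iE\cap g_kEM^2=\varnothing$ for $i\ne k$, which you anticipate and which an $(EM^2,\,EM^2(EM^2)^{-1})$-tiling together with Proposition~\ref{p:folner-tiling} supplies. The paper's detour through $\Gamma$ buys exactly the avoidance of these two issues --- no genericity discussion and a dimension count on a product of closed sets rather than on fibers --- at the cost of the Zorn's-lemma argument; your version is more direct but needs the ``all closed fibers'' form of the dimension inequality and the separation of thickened supports.
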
  

Note that $X$ is not assumed to be complete in Theorem~\ref{t:main-theorem-3}.
Combining these results, 
we obtain the following version of the Garden of Eden theorem 
(see Theorem~\ref{t:goe-alg}) for algebraic cellular automata.  

\begin{theorem}
\label{t:main-theorem-2}
Let $G$ be an amenable group and let $X$ be an irreducible complete algebraic variety over an algebraically  closed field $K$. 
Let $A \coloneqq X(K)$ denote the set of $K$-points of $X$
and let  $\tau \colon A^G \to A^G$ be an algebraic cellular automaton over $(G,X,K)$.
Then the following conditions are equivalent:
\begin{enumerate}[\rm(a)]
\item
$\tau$ is surjective;
\item
$\tau$ is $(*)$-pre-injective.
\end{enumerate} 
\end{theorem}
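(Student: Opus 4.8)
The plan is to reduce the equivalence to a single numerical invariant. The implication (a) $\Rightarrow$ (b) is precisely Theorem~\ref{t:main-theorem-3}, so nothing new is needed there. For (b) $\Rightarrow$ (a) one cannot simply invoke Theorem~\ref{t:main-theorem-1}, since $(*)$-pre-injectivity (Definition~\ref{def:*-**}) is strictly weaker than pre-injectivity; instead I would prove the strengthened Myhill statement ``$(*)$-pre-injective $\Rightarrow$ surjective'' directly. The guiding idea, mirroring the entropy proof of the classical Garden of Eden theorem, is to introduce an \emph{algebraic mean dimension} for the image of $\tau$ and to show that \emph{both} conditions (a) and (b) are equivalent to this invariant attaining its maximal value $\dim X$.

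Concretely, for each finite $E \subset G$ I would let $X_\tau(E) \subseteq X^E$ be the variety cut out by the patterns $(\tau c)\vert_E$. Because $X$ is complete, so is every finite power $X^{E}$, and the morphism of complete varieties whose image is $X_\tau(E)$ is closed; thus $X_\tau(E)$ is a genuine closed subvariety of $X^E$, and---using $K = \overline{K}$---its set of $K$-points is exactly the set of admissible image patterns on $E$. The function $E \mapsto \dim X_\tau(E)$ is non-negative, $G$-invariant, and subadditive under disjoint unions, so by the Ornstein--Weiss lemma the limit
\[
\mdim_\tau \coloneqq \lim_{n} \frac{\dim X_\tau(F_n)}{\lvert F_n\rvert}
\]
exists along every F{\o}lner net $(F_n)$ in the amenable group $G$ and equals the infimum of the averages $\dim X_\tau(F)/\lvert F\rvert$.

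Next I would characterize surjectivity. Since each $\dim X_\tau(F) \le \lvert F\rvert \dim X$, the average is bounded above by $\dim X$, and because $\mdim_\tau$ equals the infimum of these averages, the equality $\mdim_\tau = \dim X$ forces $\dim X_\tau(F) = \lvert F\rvert \dim X$ for \emph{every} finite $F$. As $X$ is irreducible over the algebraically closed field $K$, the product $X^F$ is irreducible, so a closed subvariety of full dimension must be all of $X^F$; hence $X_\tau(F) = X^F$ for all $F$. A standard inverse-limit argument---again using completeness to guarantee that the ``solution'' subvarieties $\{\,c\vert_{F'} : \tau(c)\vert_F = y\vert_F\,\}$ are non-empty and proper---then patches local preimages of any target $y \in A^G$ into a global one, yielding surjectivity. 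Conversely surjectivity trivially gives $X_\tau(F) = X^F$ and hence $\mdim_\tau = \dim X$, so condition (a) is equivalent to full mean dimension.

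It remains to show that $(*)$-pre-injectivity is likewise equivalent to $\mdim_\tau = \dim X$, and this is where I expect the main obstacle. In the contrapositive direction, a failure of $(*)$-pre-injectivity (Definition~\ref{def:*-**}) should produce a positive-dimensional ``collapse'' among the configurations differing only inside a fixed finite window $\Omega$; quasi-tiling a large F{\o}lner set $F$ by disjoint $G$-translates of $\Omega$ (Ornstein--Weiss) and using subadditivity, each tile contributes an independent dimension drop $\delta > 0$, so $\dim X_\tau(F) \le \lvert F\rvert\dim X - \delta \cdot(\#\text{tiles})$ and $\mdim_\tau \le \dim X - \delta/\lvert\Omega\rvert < \dim X$. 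The reverse direction is the delicate one: from a deficit $\mdim_\tau < \dim X$ I would extract some finite $F$ on which the induced map onto $X_\tau(F)$ has a positive-dimensional generic fiber (by the fibre-dimension theorem), and then---crucially because $K = \overline{K}$, so a positive-dimensional $K$-variety has infinitely many $K$-points---realize two distinct, almost equal configurations differing only inside $F$ with the same image, contradicting $(*)$-pre-injectivity. The hard part is precisely this conversion of a Krull-dimension deficit of the image variety into the \emph{finitely supported} collapse demanded by Definition~\ref{def:*-**}: one must track fiber dimensions carefully, ensure the relevant fibers are defined and positive-dimensional over $K$ rather than merely over the function field, and verify that the weakening from pre-injectivity to $(*)$-pre-injectivity is exactly what makes ``positive-dimensional generic fiber'' and ``$(*)$-non-pre-injective'' coincide. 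Irreducibility, completeness, and algebraic closedness of $K$ all enter decisively at this step.
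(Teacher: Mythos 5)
Your overall architecture coincides with the paper's: both (a) and (b) are reduced to the single invariant ``the image of $\tau$ has maximal algebraic mean dimension $\dim X$,'' with (a)~$\Rightarrow$~(b) quoted from Theorem~\ref{t:main-theorem-3} and the equivalence of surjectivity with maximal mean dimension obtained from completeness (closedness of the image varieties), irreducibility (a full-dimensional closed subset of $X^F$ is all of $X^F$), and a patching/closed-image argument. Those parts are sound in outline. One small caveat: for a general amenable group the Ornstein--Weiss limit of a subadditive invariant function need not equal the infimum of the averages $\dim X_\tau(F)/\lvert F\rvert$, so you cannot directly conclude that maximal mean dimension forces $\dim X_\tau(F)=\lvert F\rvert\dim X$ for \emph{every} $F$; but the implication you actually need (a deficit on one finite window forces a deficit of the mean dimension) follows from the tiling estimate, exactly as in Lemma~\ref{l:inv-mdim-non-max}, so this is repairable.

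The genuine gap is in the step you yourself flag as delicate: ``mean-dimension deficit $\Rightarrow$ failure of $(*)$-pre-injectivity.'' Your proposed mechanism --- extract a positive-dimensional fiber and ``realize two distinct, almost equal configurations differing only inside $F$ with the same image, contradicting $(*)$-pre-injectivity'' --- does not contradict $(*)$-pre-injectivity; it contradicts pre-injectivity. Since pre-injectivity implies $(*)$-pre-injectivity (Proposition~\ref{p:pre-injectivity-*}.(i)), refuting pre-injectivity is strictly weaker than refuting $(*)$-pre-injectivity, so this step proves nothing about condition (b). To negate $(*)$-pre-injectivity as in Definition~\ref{def:*-**} you must exhibit a single proper Zariski-closed subset $H\subsetneq A^\Omega$, \emph{independent of the exterior condition} $p$, with $\tau(H_p)=\tau((A^\Omega)_p)$ for \emph{every} $p\in A^{G\setminus\Omega}$: a fixed small closed set realizing every attainable image pattern no matter what the boundary data are. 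The paper manufactures such an $H$ in Lemma~\ref{l:chow-**}: by Chow's lemma take a projective model $\tilde{X}\twoheadrightarrow X^\Omega$ and a hyperplane section $\tilde{H}$; the projective dimension theorem guarantees that $\tilde{H}$ meets every positive-dimensional fiber of any morphism onto a target of strictly smaller dimension, so the image $H$ of $\tilde{H}$ surjects onto that target --- uniformly in $p$, precisely because $H$ is built from $X^\Omega$ alone before the boundary condition is chosen. This is the one idea your proposal does not contain, and without it the implication (d)~$\Rightarrow$~(b) (equivalently, the route from a dimension deficit to non-$(*)$-pre-injectivity, or in the paper's organization the implication from non-$(**)$- to non-$(*)$-pre-injectivity) is not established.
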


The paper is organized as follows. 
The next  section collects 
background material on algebraic varieties and amenable groups.
Section~\ref{sec:alg-ca} contains some preliminary results on
algebraic cellular automata. 
In Section~\ref{sec:mean-dim}, we introduce the \emph{algebraic mean dimension}
$\mdim_\FF(\Gamma)$ of a subset $\Gamma \subset A^G$, where $G$ is an amenable group equipped with a F\o lner net $\FF$ and
$A$ is the set of $K$-points of an algebraic variety $X$ over an algebraically closed field $K$.
The definition of algebraic mean dimension is anologous to that of topological entropy.
Here $\mdim_\FF(\Gamma)$ is obtained as a  limit of the average Krull dimension of the projection of $\Gamma$ along the F\o lner net. It follows in particular that
 $\mdim_\FF(\Gamma)$ is always bounded above by the dimension of the variety $X$
 and equality holds if $\Gamma = A^G$.
 In Section~\ref{sec:surjectivity}, we prove that if $X$ is irreducible and complete, then $\tau$ is surjective if and only if its image  has maximal algebraic mean dimension
 (Theorem~\ref{t:sur-dim}).
In Section~\ref{sec:weeak-pre-inj}, we introduce the notions of $(*)$-pre-injectivity and 
$(**)$-pre-injectivity, which are both implied by pre-injectivity.
In the trivial case when $A$ is finite, that is, $X$ is $0$-dimensional, 
every cellular automaton $\tau \colon A^G \to A^G$ is algebraic over $(G,X,K)$ and
$(*)$-pre-injectivity is equivalent to pre-injectivity (cf. Example~\ref{ex:reducible}).
We show that if $X$ is irreducible and complete,
then $\tau$ is $(*)$-pre-injective if and only if it is $(**)$-pre-injective
(see Assertion~(iii) in Proposition~\ref{p:pre-injectivity-*}).
We also establish relations between $(*)$-pre-injectivity, $(**)$-pre-injectivity, and the fact
that the image of $\tau$ has maximal algebraic mean dimension.
In Section~\ref{sec:main-results},
we combine the results of the two previous sections to obtain Theorem~\ref{t:main-theorem-3} and
Theorem~\ref{t:goe-alg},
which extends Theorem~\ref{t:main-theorem-1} as well as Theorem~\ref{t:main-theorem-2}.
Another result in this section says that, under suitable conditions,
the surjectivity  of  an algebraic cellular automaton, provided it is defined over an amenable group and an algebraically closed field, is a property that is invariant under  base change of the ground field 
(Theorem~\ref{t:base-change-surj}).
Several counterexamples are presented in Section~\ref{sec:counterexamples} showing that the hypotheses in our results are reasonably optimal. 
Some open questions are formulated    in the final section.

Let $G$ be an amenable group with a F\o lner net $\FF$. 
Let $X$ be an algebraic variety over an algebraically closed field $K$. 
Let $A\coloneqq X(K)$ and suppose that $\tau \colon A^G \to A^G$ is an algebraic cellular automaton over $(G,X,K)$. 
We can summarize our results in the following diagram relating the various properties of the algebraic cellular automaton $\tau$:
\[
\begin{tikzcd}
[cells={nodes={draw=black}}, column sep=2.7em, row sep=4em]
 & \text{ Surjectivity } 
 \arrow[d,"\text{Prop.~\ref{p:prop-faciles-mdim}.(i) }", Rightarrow, shift right=1ex, swap]  
 & &  & \\
 & \mdim_\FF(\tau(A^G))=\dim(A) 
 \arrow[u, "X \text{ irred., complete (Thm.~\ref{t:sur-dim})}", dashrightarrow, shift right=1ex, swap] 
 \arrow[d, "\text{Prop.~\ref{p:**-implies-mdim} }", Leftarrow, swap] 
 \arrow[drrrr, "X \text{ irred.} \text{ (Prop.~\ref{p:mdim-implies-*-pre})} ", dashrightarrow, bend left=15] 
&  & & &\\
& (**)\text{-pre-injectivity} 
\arrow[rrrr,"X\text{ irred.} \text{ (Prop.~\ref{p:pre-injectivity-*}.(ii)})", dashrightarrow, shift right=0.8ex, swap] 
& & & & (*)\text{-pre-injectivity}   
\arrow[llll, "X\text{ irred., complete} \text{ (Prop.~\ref{p:pre-injectivity-*}.(iii)})", dashrightarrow, shift right=0.8ex,swap]  
\arrow[dllll, "\text{X finite (Ex. \ref{ex:reducible})}", dashrightarrow,  shorten <= -0.8em , shift left=1.0ex, bend left=17] \\ 
& \text{ Pre-injectivity }   
\arrow[u, "\text{Prop.~\ref{p:pre-injectivity-*}.(i) }", Rightarrow] 
\arrow[urrrr, "\text{Prop.~\ref{p:pre-injectivity-*}.(i)}", Rightarrow, shorten <= 0.05em, shift left=0.4ex, bend right=16] 
& & & &\\
& \text{ Injectivity }  
\arrow[u, Rightarrow] & & & &
\end{tikzcd}
\]  

\section{Background material and preliminary results}
\label{sec:background}

\subsection{Krull dimension and Jacobson spaces}
Let $X$ be a topological space.
Given a subset $Y \subset X$, we denote by $\overline{Y}$ the closure of $Y$ in $X$.
\par
A point $x \in X$ is said to be  a \emph{closed} (respectively, \emph{generic}) point of $X$ 
if   $\overline{\{x\}} = \{x\}$  
(respectively, ~$\overline{\{x\}} = X$).  
\par
One says that $X$ is \emph{irreducible} if every non-empty open subset of $X$ is dense in $X$.
This amounts to saying that if $X = Y \cup Z$, where $Y$ and $Z$ are closed subsets of  $Y$, then 
$X = Y$ or $X = Z$. 
\par
A subset $Y \subset X$ is called an \emph{irreducible component} of $X$ if $Y$ is 
irreducible (for the induced topology) and maximal for inclusion among all irreducible subsets of $X$.
As the closure of an irreducible subset of $X$ is irreducible,
every irreducible component of $X$ is closed in $X$.
By Zorn's lemma, every irreducible subset of $X$ is contained in  some irreducible component of $X$.
Since every singleton of $X$ is irreducible, it follows that $X$ is the union of its irreducible components.
\par
The topological space $X$ is called \emph{Noetherian} if every descending chain of closed subsets of $X$ is stationary. 
Every subset of a Noetherian topological space is Noetherian for the induced topology.
If $X$ is Noetherian, then $X$ is quasi-compact and admits only finitely many irreducible components.
\par
The \emph{Krull dimension} of $X$, denoted by $\dim(X)$, is defined as being the supremum of the lengths of all the strictly ascending  chains of closed irreducible  subsets of $X$.

\begin{proposition}
\label{p:dim-subsets}
Let $X$ be a topological space.
Then the following hold:
\begin{enumerate}[\rm (i)]
\item
if $Y$ is a subset of $X$, then $\dim(Y) \leq \dim(X)$;
\item
if $X$ is irreducible with $\dim(X) < \infty$ and $Y$ is a closed subset of $X$ such that 
$\dim(Y) = \dim(X)$, 
then one has $Y = X$;
\item
if $(U_\lambda)_{\lambda \in \Lambda}$ is an open cover of $X$, then
one has $\dim(X) = \sup_{\lambda \in \Lambda} \dim(U_\lambda)$;
\item
one has $\dim(X) = \sup_{Y \in \CC(X)}  \dim(Y)$, where $\CC(X)$ denotes the set of all irreducible components of $X$;
\item
if $X$ is the union of a finite family $(Z_i)_{i \in I}$ of closed irreducible subsets of $X$, then every irreducible component of $X$ is equal to one of the $Z_i$ and one has
$\dim(X) = \max_{i \in I} \dim(Z_i)$.
 \end{enumerate} 
\end{proposition}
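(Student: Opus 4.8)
The plan is to derive all five assertions from a single mechanism: the behaviour of chains of closed irreducible subsets under restriction to a subspace and under closure. The basic observation I would record first is that for any subset $Y \subset X$, equipped with the induced topology, the operation $W \mapsto \overline{W}$ sends closed irreducible subsets of $Y$ to closed irreducible subsets of $X$, injectively and preserving strict inclusions. Indeed, the closure of an irreducible set is irreducible, and for $W$ closed in $Y$ one has $\overline{W} \cap Y = W$, since the trace on $Y$ of the closure in $X$ is the closure computed inside $Y$. Consequently $W_0 \subsetneq W_1$ in $Y$ forces $\overline{W_0} \subsetneq \overline{W_1}$ in $X$: equality of the closures would, upon intersecting with $Y$, give $W_0 = W_1$. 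A chain of length $n$ in $Y$ thus yields one of length $n$ in $X$, which is precisely (i). For (ii) I would use that $Y$ is closed, so a closed irreducible subset of $Y$ is already closed and irreducible in $X$; a chain $Z_0 \subsetneq \dots \subsetneq Z_n$ realizing $\dim(Y) = \dim(X) = n$ then lives in $X$ with $Z_n \subseteq Y \subseteq X$. If $Z_n \neq X$, irreducibility of $X$ lets me append $X$ to obtain a chain of length $n+1$, contradicting $\dim(X) = n$; hence $Z_n = X$, forcing $Y = X$.

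For (iii), the inequality $\sup_\lambda \dim(U_\lambda) \le \dim(X)$ is immediate from (i). For the reverse I would take an arbitrary chain $Z_0 \subsetneq \dots \subsetneq Z_n$ of closed irreducible subsets of $X$, choose $\lambda$ with $Z_0 \cap U_\lambda \neq \emptyset$ (possible as the $U_\lambda$ cover $X$), and trace the chain on $U_\lambda$. Since $Z_0 \subseteq Z_i$, each $Z_i \cap U_\lambda$ is a nonempty open subset of the irreducible space $Z_i$, hence is itself irreducible and dense in $Z_i$, with closure equal to $Z_i$; it is also closed in $U_\lambda$ because $Z_i$ is closed in $X$. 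Distinctness of the closures $Z_i$ then keeps the traces strictly nested, so I obtain a length-$n$ chain of closed irreducible subsets of $U_\lambda$, whence $\dim(U_\lambda) \ge n$; taking the supremum over chains gives equality.

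For (iv), the bound $\le$ is again (i), while for $\ge$ I would observe that the top term $Z_n$ of any chain is irreducible, hence contained in some irreducible component $Y$ by Zorn's lemma, as recalled in the text; the whole chain then lies in $Y$ and consists of subsets closed in $Y$, so $\dim(Y) \ge n$. Finally, for (v) the decisive input is the lemma that an irreducible space which is a finite union of closed subsets equals one of them. Applying it to a component $Y$, written as $Y = \bigcup_{i \in I}(Y \cap Z_i)$, gives $Y \subseteq Z_i$ for some $i$; maximality of $Y$ among irreducible subsets, together with the irreducibility of $Z_i$, forces $Y = Z_i$. Thus $\CC(X) \subseteq \{Z_i : i \in I\}$. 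Combining this with (iv), with finiteness of $I$, and with the fact that each $Z_i$ is contained in some component, I conclude $\dim(X) = \sup_{Y \in \CC(X)} \dim(Y) = \max_{i \in I} \dim(Z_i)$.

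I expect the only genuinely delicate points to be the preservation of strict inclusions under closure in (i) and under intersection with an open set in (iii): both hinge on the identity $\overline{W} \cap Y = W$ for $W$ closed in $Y$ and, in (iii), on the fact that a nonempty open subset of an irreducible space is dense. Once these two topological facts are in place, each of the five statements reduces to routine bookkeeping with chains, so I would state them early and reuse them throughout.
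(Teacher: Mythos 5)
Your proposal is correct in all five parts, and the two delicate points you isolate --- the identity $\overline{W}\cap Y=W$ for $W$ closed in $Y$, and the fact that a nonempty open subset of an irreducible space is irreducible and dense with closure the whole space --- are exactly what makes the chain bookkeeping go through. The difference from the paper is not mathematical but expository: the paper gives no argument at all for (i)--(iv), citing Lemma~5.7 of G\"ortz--Wedhorn, and deduces (v) from a cited result of Liu together with assertion~(iii), whereas you supply a complete self-contained proof (and derive the dimension formula in (v) from (iv) rather than (iii), which is equally valid since each $Z_i$ lies in some component and every component is one of the $Z_i$). Your arguments are essentially the standard ones found in the cited references, so nothing is lost; what is gained is that the proposition no longer depends on external sources. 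The only cosmetic caveat is the usual convention that the closed irreducible subsets occurring in chains are nonempty (so that closures of chain members remain legitimate chain members), which you use implicitly and which matches the paper's setup.
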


\begin{proof}
For (i), (ii), (iii), and (iv), see \cite[Lemma~5.7]{gorz}.
Assertion (v) follows from \cite[Proposition~2.4.5.(c)]{liu-alg-geom} and Assertion~(iii).
\end{proof}

A subset $Y$ of a topological space $X$ is said to be \emph{very dense} in $X$ if
$F \cap Y$ is dense in $F$  for every closed subset $F$ of $X$.

\begin{proposition}
\label{p:dim-very-dense}
Suppose that $Y$ is a very dense subset of a topological space $X$.
Then one has $\dim(X) = \dim(Y)$.
\end{proposition}

\begin{proof}
By Proposition~\ref{p:dim-subsets},
it suffices to prove that $\dim(X) \leq \dim(Y)$.
We first observe that if $F$ and $F'$ are closed subsets of $X$ such that $F \cap Y = F' \cap Y$, 
then $F = F'$ since $Y$ is very dense in $X$. 
Note  also that $F\cap Y$ is irreducible for every closed irreducible subset $F$ of $X$.  
Thus, if $F_0 \subset F_1 \subset \dots \subset F_n$ is a strictly ascending chain of closed irreducible subsets of $X$, then
$F_0 \cap Y \subset F_1 \cap Y \subset \dots \subset F_n \cap Y$ is a strictly ascending chain of closed irreducible subsets of $Y$.
It follows that $\dim(X) \leq \dim(Y)$.
\end{proof}

Let $X$ be a topological space.
We denote by $X_0$ the set of closed points of $X$.
One says that  the topological space  $X$ is  \emph{Jacobson} if  $X_0$ is very dense
in $X$.
From the result of Proposition~\ref{p:dim-very-dense}, we immediately deduce the following.

\begin{corollary}
\label{c:dim-closed-points-jacob}
Let $X$ be a Jacobson space. 
Then one has $\dim(X) = \dim(X_0)$.
\end{corollary}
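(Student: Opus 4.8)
The plan is to recognize this as a direct specialization of Proposition~\ref{p:dim-very-dense}, so there is really nothing to do beyond unwinding a definition and quoting the proposition. By the very definition of a Jacobson space recalled immediately above the statement, saying that $X$ is Jacobson means precisely that the set $X_0$ of closed points of $X$ is very dense in $X$. Hence the single hypothesis of Proposition~\ref{p:dim-very-dense} is already met, with the very dense subset taken to be $Y = X_0$.

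I would then simply invoke Proposition~\ref{p:dim-very-dense}, which asserts that $\dim(X) = \dim(Y)$ for any very dense subset $Y \subset X$, and read off the desired conclusion $\dim(X) = \dim(X_0)$. I do not expect any genuine obstacle here: all the substantive work has already been carried out in the proof of Proposition~\ref{p:dim-very-dense}, whose two key points are that a closed subset of $X$ is determined by its intersection with a very dense subset, and that intersecting a closed irreducible subset of $X$ with a very dense subset preserves irreducibility, so that a strictly ascending chain of closed irreducible subsets of $X$ restricts to one in $X_0$ and therefore $\dim(X) \leq \dim(X_0)$; the reverse inequality is the trivial Proposition~\ref{p:dim-subsets}.(i). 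The corollary is merely the instance of that general statement attached to the canonical very dense subset singled out by the Jacobson condition, so the proof should amount to a single sentence applying the proposition to $Y = X_0$.
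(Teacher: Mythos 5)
Your proposal is correct and matches the paper exactly: the corollary is stated there as an immediate consequence of Proposition~\ref{p:dim-very-dense}, obtained by taking $Y = X_0$, which is very dense in $X$ by the definition of a Jacobson space. Nothing further is needed.
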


 A subset of a topological space $X$  is said to be \emph{locally closed} if it is the intersection of an open subset and a closed subset of $X$.
 A subset  of  $X$ is called  \emph{constructible} if it is a finite union of locally closed subsets of $X$.
The set of constructible subsets of $X$ is closed 
under finite union, finite intersection, and set difference.
Every constructible subset  $C \subset X$ contains a dense open subset of $\overline{C}$
(see~\cite[Lemma~2.1]{an-rigid}).

\begin{proposition}
\label{p:construct-inJacob}
Let $X$ be a Jacobson topological space and let $C$ be a constructible subset of $X$.
Then the following hold:
\begin{enumerate}[\rm (i)]
\item
 $C$ is Jacobson;
 \item
 $C_0 = C \cap X_0$;
 \item
 $\dim(C) = \dim(C_0) = \dim(C_0 \cap X_0)$.
\end{enumerate}
\end{proposition}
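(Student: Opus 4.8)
The plan is to deduce all three assertions from a single key fact, namely that $C \cap X_0$ is very dense in $C$. Once this is established, assertion (ii) follows almost immediately, assertion (i) becomes a mere restatement of the key fact, and assertion (iii) is a formal consequence of (i) and (ii) together with Corollary~\ref{c:dim-closed-points-jacob}. Indeed, for (iii) one gets $\dim(C) = \dim(C_0)$ by applying Corollary~\ref{c:dim-closed-points-jacob} to the Jacobson space $C$, while the equality $\dim(C_0) = \dim(C_0 \cap X_0)$ is trivial once (ii) furnishes $C_0 = C \cap X_0 \subseteq X_0$, so that $C_0 \cap X_0 = C_0$.

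To prove the key fact, I would take an arbitrary closed subset $G$ of $C$ and set $F \coloneqq \overline{G}$, its closure in $X$. Since $G$ is closed in $C$, one has $G = F \cap C$, so $G$ is constructible in $F$ (being the intersection of the constructible set $C$ with the closed set $F$) and is dense in $F$. The fact that a constructible subset contains a dense open subset of its closure (\cite[Lemma~2.1]{an-rigid}) then produces an open subset $U$ of $F$ with $U \subseteq G$ and $\overline{U} = F$. Now let $V$ be any nonempty open subset of $G$, say $V = W \cap G$ with $W$ open in $X$. Because $G \subseteq F$, the set $W \cap F$ is a nonempty open subset of $F$, and since $U$ is dense in $F$ it meets $U$; as $U \subseteq G$, the resulting nonempty open set $W \cap U$ of $F$ is contained in $V$. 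Finally, as $X$ is Jacobson, $F \cap X_0$ is dense in the closed set $F$, so the nonempty open set $W \cap U$ contains a point of $X_0$, which then lies in $V \cap X_0 \subseteq V \cap (C \cap X_0)$. This shows that $C \cap X_0$ meets every nonempty open subset of $G$, i.e. is dense in $G$; since $G$ was arbitrary, $C \cap X_0$ is very dense in $C$.

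It then remains to record the easy deductions. For (ii), the inclusion $C \cap X_0 \subseteq C_0$ is immediate, since a point of $X$ that is closed in $X$ and lies in $C$ is a fortiori closed in $C$. Conversely, if $x \in C_0$ then $\{x\}$ is a closed subset of $C$, and applying the very density of $C \cap X_0$ to this singleton forces $\{x\} \cap (C \cap X_0)$ to be nonempty, whence $x \in X_0$; this gives $C_0 \subseteq C \cap X_0$. Assertion (i) is now precisely the statement that $C_0 = C \cap X_0$ is very dense in $C$, which is exactly the key fact.

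The only substantial work is the key fact, and within it the main obstacle I anticipate is the careful bookkeeping of the several ambient topologies: closures and density are taken variously in $X$, in $F$, and in $C$, and one must verify that $G = F \cap C$ is genuinely constructible and dense in $F$ so that \cite[Lemma~2.1]{an-rigid} applies. The Jacobson hypothesis enters at exactly one point, namely the assertion that $F \cap X_0$ is dense in the closed set $F$; all the remaining manipulations are formal, and the dimension statement in (iii) then follows without further effort from Corollary~\ref{c:dim-closed-points-jacob} (itself a consequence of Proposition~\ref{p:dim-very-dense}).
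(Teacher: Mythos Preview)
Your argument is correct. The paper itself does not give a self-contained proof of (i) and (ii), simply citing \cite[Lemma~2.2]{cscp-alg-ca}, and then deduces (iii) from (i), (ii), and Corollary~\ref{c:dim-closed-points-jacob} exactly as you do; your key-fact approach thus supplies the details the paper defers to that reference.
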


\begin{proof}
See e.g.~\cite[Lemma~2.2]{cscp-alg-ca} for the proof of (i) and (ii).
Assertion~(iii) follows from (i),   (ii),  and Corollary~\ref{c:dim-closed-points-jacob}.
\end{proof}

As immediate consequences of the preceding proposition, we get the following.

\begin{corollary}
\label{c:constr-Jacob-bije}
Let $X$ be a Jacobson space.
Then the map $C \mapsto C \cap X_0$ yields a bijection from the set of constructible subsets of $X$ onto the set of constructible subsets of $X_0$.
Moreover, this map preserves Krull dimension.
\end{corollary}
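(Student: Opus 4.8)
The plan is to analyze the map $\phi \colon C \mapsto C \cap X_0$ in four steps: first show that it is well defined, i.e., that it lands in the constructible subsets of $X_0$; then that it preserves Krull dimension; then that it is injective; and finally that it is surjective. The dimension statement and well-definedness will follow quickly from the results already established, whereas injectivity is where the real content lies.

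For well-definedness, I would write a constructible $C \subset X$ as a finite union $C = \bigcup_i (U_i \cap F_i)$ with $U_i$ open and $F_i$ closed in $X$; intersecting with $X_0$ then exhibits $C \cap X_0 = \bigcup_i (U_i \cap X_0) \cap (F_i \cap X_0)$ as a finite union of locally closed subsets of the subspace $X_0$, since $U_i \cap X_0$ is open and $F_i \cap X_0$ is closed in $X_0$. For the preservation of dimension I would simply invoke Proposition~\ref{p:construct-inJacob}: by assertion~(ii) one has $C_0 = C \cap X_0$, and by assertion~(iii) one has $\dim(C) = \dim(C_0) = \dim(C \cap X_0)$, where the Krull dimension of $C \cap X_0$ is independent of whether it is computed inside $C$, inside $X_0$, or inside $X$, as the subspace topology is transitive.

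For injectivity, the key reduction is to show that every non-empty constructible subset $D \subset X$ satisfies $D \cap X_0 \neq \emptyset$. Granting this, if $C, C'$ are constructible with $C \cap X_0 = C' \cap X_0$, then the constructible sets $C \setminus C'$ and $C' \setminus C$ meet $X_0$ trivially, hence are empty, so that $C = C'$. To prove the reduction, I would use the fact recalled in the excerpt that a non-empty constructible $D$ contains a dense open subset $V$ of its closure $\overline{D}$. Since $X$ is Jacobson, $X_0$ is very dense, so $\overline{D} \cap X_0$ is dense in $\overline{D}$; as $V$ is a non-empty open subset of $\overline{D}$, it must meet the dense set $\overline{D} \cap X_0$, whence $V \cap X_0 \neq \emptyset$ and therefore $D \cap X_0 \neq \emptyset$. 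I expect this to be the main obstacle, as it is precisely here that the Jacobson hypothesis enters.

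For surjectivity, given a constructible $E \subset X_0$, I would write $E = \bigcup_i (U_i' \cap F_i')$ with $U_i'$ open and $F_i'$ closed in $X_0$, choose open subsets $U_i \subset X$ with $U_i \cap X_0 = U_i'$, and pass to the closures $\overline{F_i'}$ taken in $X$. Writing $F_i' = G_i \cap X_0$ with $G_i$ closed in $X$ gives $\overline{F_i'} \subseteq G_i$, and hence $\overline{F_i'} \cap X_0 = F_i'$. Consequently $C \coloneqq \bigcup_i (U_i \cap \overline{F_i'})$ is a constructible subset of $X$ with $\phi(C) = C \cap X_0 = E$, which completes the verification that $\phi$ is a dimension-preserving bijection.
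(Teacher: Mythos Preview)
Your proof is correct. The paper does not spell out a proof at all: it simply declares the corollary an ``immediate consequence'' of Proposition~\ref{p:construct-inJacob}. Your argument supplies the details in a sound way; the dimension claim is handled exactly as the paper intends, via parts~(ii) and~(iii) of that proposition.

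One minor remark: your injectivity step re-derives from scratch the fact that every non-empty constructible $D \subset X$ meets $X_0$. This is fine, but the paper's intended shortcut is presumably to invoke Proposition~\ref{p:construct-inJacob} directly: by~(i) the space $D$ is itself Jacobson, so its set of closed points $D_0$ is dense and in particular non-empty, and by~(ii) one has $D_0 = D \cap X_0$. Either route works; yours is slightly more self-contained, theirs slightly quicker. Your surjectivity argument is likewise correct and is the standard way to lift constructible subsets from $X_0$ to $X$.
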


\begin{corollary}
\label{c:jacob-irred}
Let $X$ be a Jacobson space. 
Then $X$ is irreducible if and only if $X_0$ is irreducible. 
\end{corollary}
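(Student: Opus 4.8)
The plan is to reduce the statement to the open-set characterization of irreducibility and to exploit the fact that, for a Jacobson space, $X_0$ is dense in $X$. Indeed, taking $F = X$ in the definition of very density shows that $X_0 = X \cap X_0$ is dense in $X$. Recall that a topological space is irreducible precisely when it is non-empty and any two of its non-empty open subsets have non-empty intersection (equivalently, no two non-empty open subsets are disjoint). Since a dense subset of a non-empty space is non-empty, and since $X_0 \subseteq X$, we have $X \neq \emptyset$ if and only if $X_0 \neq \emptyset$; so the non-emptiness clause transfers for free, and it remains only to compare the intersection patterns of non-empty open sets in $X$ and in $X_0$.

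First I would record the dictionary between open sets of $X$ and open sets of the subspace $X_0$. Every open subset of $X_0$ has the form $U \cap X_0$ with $U$ open in $X$, and by density of $X_0$ one has $U \cap X_0 \neq \emptyset$ if and only if $U \neq \emptyset$. Moreover, for open subsets $U, V$ of $X$ one has $(U \cap X_0) \cap (V \cap X_0) = (U \cap V) \cap X_0$, and since $U \cap V$ is again open, this set is non-empty if and only if $U \cap V \neq \emptyset$, again by density. Thus density of $X_0$ converts statements about the meeting of non-empty open sets in $X_0$ into the corresponding statements in $X$, and conversely.

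With this dictionary the two implications are immediate. If $X$ is irreducible, then given two non-empty open subsets of $X_0$, write them as $U \cap X_0$ and $V \cap X_0$ with $U, V$ non-empty open in $X$; irreducibility of $X$ gives $U \cap V \neq \emptyset$, hence $(U \cap X_0) \cap (V \cap X_0) = (U \cap V) \cap X_0 \neq \emptyset$, so $X_0$ is irreducible. Conversely, if $X_0$ is irreducible and $U, V$ are non-empty open subsets of $X$, then $U \cap X_0$ and $V \cap X_0$ are non-empty open subsets of $X_0$, so they meet, forcing $U \cap V \neq \emptyset$; hence $X$ is irreducible.

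The argument is short, and the only point requiring care is the transfer of non-emptiness between $X$ and $X_0$, which is exactly where density (guaranteed here by the Jacobson hypothesis, via very density) is indispensable; I expect this to be the sole potential pitfall, since for a general non-dense subset the implication ``$X_0$ irreducible $\Rightarrow X$ irreducible'' can fail. As an alternative route one could instead use the very-density hypothesis to produce the inclusion-preserving bijection $F \mapsto F \cap X_0$ between the closed subsets of $X$ and those of $X_0$ (with inverse $G \mapsto \overline{G}$, as is already implicit in the proof of Proposition~\ref{p:dim-very-dense}), and then observe that irreducibility is a property of the lattice of closed sets together with its top element; but the density argument above is more economical.
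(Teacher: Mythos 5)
Your proof is correct. The paper itself gives no written argument for this corollary: it is stated as an ``immediate consequence'' of Proposition~\ref{p:construct-inJacob}, the intended route being the inclusion-preserving bijection $F \mapsto F \cap X_0$ between closed subsets of $X$ and closed subsets of $X_0$ (the specialization of Corollary~\ref{c:constr-Jacob-bije} to closed sets), from which irreducibility transfers because it is a statement about the lattice of closed sets --- exactly the alternative you sketch in your last paragraph. Your main argument instead works with the open-set characterization and uses only the \emph{density} of $X_0$ in $X$ (obtained from very density by taking $F = X$), not the full very-density/bijection machinery; this is a genuinely weaker hypothesis, and it cleanly isolates why the statement holds: a dense subspace of any space is irreducible if and only if the ambient space is. Both routes are short; yours is the more economical and the more general, while the paper's implicit route has the advantage of reusing the closed-set dictionary already set up for the dimension statements. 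The one point worth a passing remark is the convention on the empty space (the paper's definition of irreducibility makes $\varnothing$ vacuously irreducible, whereas your open-set characterization excludes it), but since $X = \varnothing$ if and only if $X_0 = \varnothing$, the equivalence is unaffected either way.
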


\subsection{Schemes and algebraic varieties}
In this subsection, we collect all the material about schemes and algebraic  varieties
that we shall need in the present paper
(see  
\cite{gorz},
\cite{grothendieck-20-1964}, 
\cite{ega-4-3}, \cite{harris}, \cite{hartshorne}, 
\cite{liu-alg-geom},
\cite{vakil}
  for more details).
 All rings are commutative with $1$.
 We recall that a \emph{scheme} is a locally ringed space, that is, a topological space endowed with a sheaf of rings such that the stalk at each point is a local ring.
Following a common abuse, if there is no risk of confusion, we shall use the same symbol to denote a scheme and its underlying topological space. 
The topology on the underlying topological space of a scheme is called the \emph{Zariski topology}.    
\par
Every scheme $X$ is \emph{sobre}, i.e., the map $x \mapsto \overline{\{x\}}$ yields a bijection from $X$ onto the set of non-empty closed irreducible subsets of $X$ (see e.g.~Proposition~3.23 in~\cite{gorz}).
In particular, every non-empty closed irreducible subset of a scheme $X$ admits a unique generic point.  
A scheme is called \emph{irreducible} (respectively, \emph{Jacobson}) if its underlying topological space is irreducible (respectively, Jacobson).
The \emph{Krull dimension} $\dim(X)$ of a scheme $X$ is define as being the  dimension of its underlying topological space.
\par
The \emph{spectrum} of a ring $R$ is a scheme whose underlying set consists of all prime ideals of $R$. 
The spectrum of a ring $R$ is denoted by $\Spec(R)$ or simply $R$ when there is no risk of confusion.
The \emph{Krull dimension} $\dim(R)$ of a ring $R$ is the Krull dimension of its spectrum.
It is equal to the supremum of the lengths of all the strictly ascending chains of prime ideals of $R$. 
\par
A scheme $X$ is called \emph{Noetherian} if the space $X$ admits a finite affine open cover 
$(U_i)_{i \in I}$ such that, for each $i \in I$,
one has $U_i = \Spec(R_i)$,
where $R_i$ is a Noetherian ring.
The underlying topological space of every Noetherian scheme is  Noetherian.
However,   there are schemes that are not Noetherian although their underlying topological spaces are Noetherian.
\par
Let $K$ be a field. An \emph{algebraic variety} over $K$ (or $K$-\emph{algebraic variety}) is a scheme of finite type over $K$.
\par
Given an algebraic variety $X$ over a field $K$, the set of $K$-\emph{points} of $X$
is the set $X(K)$  consisting of all $K$-scheme morphisms $\Spec(K) \to X$.

\begin{proposition}
\label{p:closed-points-alg-closed-var}
Let $X$ be an algebraic variety over an algebraically closed field $K$.
Then the map from $X(K)$ into $X$, that sends each $f \in X(K)$ to the image by $f$ of the unique point of $\Spec(K)$,
yields a bijection from $X(K)$ onto the set $X_0 \subset X$ consisting of all closed points of $X$.
\end{proposition}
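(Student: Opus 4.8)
The plan is to make explicit the standard description of $K$-points via the functor of points and then to use the Nullstellensatz to pin down the image. First I would recall that giving a $K$-scheme morphism $f \colon \Spec(K) \to X$ is the same as giving a point $x \in X$, namely the image of the unique point of $\Spec(K)$, together with a local homomorphism of $K$-algebras $\mathcal{O}_{X,x} \to K$. Since $K$ is a field and the homomorphism is local, its kernel is exactly the maximal ideal $\mathfrak{m}_x$, so it factors through an injective $K$-algebra homomorphism $\iota \colon \kappa(x) \hookrightarrow K$ of the residue field $\kappa(x) = \mathcal{O}_{X,x}/\mathfrak{m}_x$. In this way the set $X(K)$ is identified with the set of pairs $(x,\iota)$, and the map in the statement is simply $(x,\iota) \mapsto x$.

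Next I would exploit that $K$ is algebraically closed. Given a $K$-embedding $\iota \colon \kappa(x) \hookrightarrow K$, its image is a subfield of $K$ containing $K$ (as $\iota$ is a $K$-algebra map, it fixes $K$), hence it equals $K$; thus $\iota$ is an isomorphism, so $\kappa(x) = K$ and $\iota = \Id_K$. Consequently there is exactly one $K$-point lying over any given such $x$, and $X(K)$ is in bijection, via $f \mapsto x$, with the subset $\{x \in X : \kappa(x) = K\}$. In particular the map onto its image is injective.

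It then remains to identify $\{x \in X : \kappa(x) = K\}$ with the set $X_0$ of closed points of $X$. Here I would invoke the key input, the Nullstellensatz (Zariski's lemma): since $X$ is of finite type over the field $K$, a point $x$ is closed if and only if $\kappa(x)$ is a finite extension of $K$. As $K$ is algebraically closed, its only finite extension is $K$ itself, so $\kappa(x)$ finite over $K$ is equivalent to $\kappa(x) = K$, and the two subsets coincide. For surjectivity onto $X_0$, given a closed point $x$ with $\kappa(x) = K$, the canonical morphism $\Spec(\kappa(x)) \to X$ supported at $x$, composed with the identification $\Spec(K) = \Spec(\kappa(x))$, furnishes a $K$-point mapping to $x$.

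The step I expect to be the crux is the equivalence \emph{$x$ closed $\iff$ $\kappa(x)/K$ finite}, which rests on the finite-type hypothesis and the Nullstellensatz; everything else is bookkeeping with the functor of points together with the algebraic closedness of $K$. This is precisely the place where the hypothesis that $X$ is an algebraic variety (of finite type over $K$) is indispensable, since over a more general base the residue field at a closed point need not be finite over $K$ and the desired description would fail.
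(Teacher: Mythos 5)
Your proof is correct. The paper does not actually give an argument for this proposition: it simply cites \cite[Corollaire~6.4.2]{grothendieck-ega-1}, and what you have written is the standard proof of that reference, namely the identification $X(K)\cong\{x\in X:\kappa(x)=K\}$ via the functor of points followed by Zariski's lemma to match this set with $X_0$. Two small remarks on your write-up. First, the identification of $X(K)$ with $\{x\in X:\kappa(x)=K\}$ needs no hypothesis on $K$ at all: as your own argument shows, any $K$-algebra homomorphism $\kappa(x)\to K$ is automatically an isomorphism, so algebraic closedness is genuinely used only in the Nullstellensatz step, exactly as you observe at the end. Second, the criterion ``$x$ is closed $\iff$ $\kappa(x)/K$ is finite'' is proved affine-locally (a prime $\mathfrak{p}\subset R$ with $\kappa(\mathfrak{p})$ finite over $K$ is maximal because a domain integral over a field is a field), and you are implicitly invoking the fact that for a scheme of finite type over a field a point that is closed in one affine open is closed in all of $X$; this is where the finite-type (Jacobson) hypothesis enters a second time, cf.\ Proposition~\ref{p:dim-alg-var}.(ii), and is worth making explicit, since for general schemes closedness of a point is not an affine-local condition.
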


\begin{proof}
See \cite[Corollaire~6.4.2]{grothendieck-ega-1}.
\end{proof}

\begin{remark}
\label{rem:closed-points-rat-points}
Proposition~\ref{p:closed-points-alg-closed-var} allows us,
in the case when  $X$ is an algebraic variety over an algebraically closed field $K$,
 to identify $X(K)$ with   $X_0$.
\end{remark}

\begin{proposition}
\label{p:dim-alg-var}
Let $X$ be an algebraic variety over a field $K$.
Let  $C$ and $D$ be constructible subsets of $X$.
Then the following hold:
\begin{enumerate}[\rm (i)]
\item
the scheme $X$ is Noetherian;
\item
$X$ is Jacobson; 
\item
 $\dim(X_0) = \dim(X) < \infty$;
 \item
$C$ is Jacobson;
\item
$C_0 = C \cap X_0$; 
\item
$\dim(C_0) = \dim(C) = \dim(\overline{C})$;
\item
if $C \subset D$ then $C_0 \subset D_0$.
\end{enumerate}
\end{proposition}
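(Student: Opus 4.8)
The plan is to prove the seven assertions of Proposition~\ref{p:dim-alg-var} by reducing each to the general facts about Jacobson spaces and constructible sets established earlier in the excerpt, together with two standard results from scheme theory. I would first dispatch (i) and (ii), which are the foundational inputs: since $X$ is of finite type over $K$, it admits a finite affine open cover by spectra of finitely generated $K$-algebras, and each such algebra is Noetherian by the Hilbert basis theorem and Jacobson by the general Nullstellensatz (Jacobson rings being stable under finite-type extension). These two facts---Noetherianity and the Jacobson property of finitely generated algebras over a field---are the only genuinely ``external'' ingredients, and I would cite them (e.g.~from \cite{gorz} or \cite{liu-alg-geom}) rather than reprove them.

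Once (i) and (ii) are in hand, the remaining assertions follow almost formally. For (iii), the finiteness $\dim(X) < \infty$ comes from Noetherianity (finitely many affine charts, each of finite Krull dimension by the dimension theory of finitely generated $K$-algebras) combined with Proposition~\ref{p:dim-subsets}.(iii); the equality $\dim(X_0) = \dim(X)$ is then immediate from Corollary~\ref{c:dim-closed-points-jacob} applied to the Jacobson space $X$. Assertions (iv) and (v) are exactly Proposition~\ref{p:construct-inJacob}.(i) and (ii) specialized to the case where the ambient Jacobson space is our variety $X$, so I would simply invoke that proposition. For (vi), the chain $\dim(C_0) = \dim(C)$ is Proposition~\ref{p:construct-inJacob}.(iii), and the further equality $\dim(C) = \dim(\overline{C})$ uses that every constructible subset contains a dense open subset of its closure (the fact recalled from \cite[Lemma~2.1]{an-rigid}), whence $C$ is dense in the irreducible-component sense in $\overline{C}$ and the two dimensions agree via Proposition~\ref{p:dim-subsets}.

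Finally, (vii) is essentially trivial: if $C \subset D$, then $C_0 = C \cap X_0 \subset D \cap X_0 = D_0$ by two applications of (v). I do not anticipate a serious obstacle here; the entire proposition is a packaging result whose content lies in correctly identifying which earlier lemma supplies each clause. The only place requiring a moment of care is the assembly of (iii), where one must be slightly attentive that the local (affine) finiteness of Krull dimension genuinely globalizes---but this is handled cleanly by Proposition~\ref{p:dim-subsets}.(iii), which lets us take the dimension of $X$ to be the supremum over a finite affine cover. The overall proof is therefore short, and I would present it as a sequence of one- or two-line justifications keyed to the cited results.
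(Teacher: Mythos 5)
Your overall architecture coincides with the paper's: (i) and (ii) are the external inputs (the paper cites them from an earlier work rather than re-deriving them from the Nullstellensatz, but the content is the same), (iii) combines Corollary~\ref{c:dim-closed-points-jacob} with an affine reduction via Proposition~\ref{p:dim-subsets}.(iii) (the paper gets finiteness from Noether normalization), (iv) and (v) are exactly Proposition~\ref{p:construct-inJacob}, and (vii) follows from (v). The one place where your argument has a genuine gap is the second equality in (vi), namely $\dim(C)=\dim(\overline{C})$. You correctly reduce to showing that a dense open subset $U\subset\overline{C}$ satisfies $\dim(U)=\dim(\overline{C})$, but you then assert that ``the two dimensions agree via Proposition~\ref{p:dim-subsets}''. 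That proposition concerns arbitrary topological spaces, and for arbitrary spaces (indeed for arbitrary schemes) the assertion is false: in $\Spec$ of a discrete valuation ring the generic point is a dense open subset of dimension $0$ inside a space of dimension $1$. No purely topological lemma can close this step.

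What is actually needed, and what the paper supplies, is the fact, special to algebraic varieties over a field, that a nonempty open subset of an \emph{irreducible} $K$-variety has the same Krull dimension as the whole variety (Theorem~5.22.(3) in \cite{gorz}, a consequence of the dimension formula and catenarity for finitely generated $K$-algebras). The paper's route through (vi) is: equip $\overline{C}$ with its reduced subscheme structure, let $x_1,\dots,x_n$ be the generic points of the finitely many irreducible components of $U$, note that $\overline{C}=\bigcup_{i}\overline{\{x_i\}}$ by density, apply Proposition~\ref{p:dim-subsets}.(v) to get $\dim(\overline{C})=\max_i\dim(\overline{\{x_i\}})$, and then invoke the variety-specific theorem on each irreducible piece $\overline{\{x_i\}}$ with its nonempty open subset $U\cap\overline{\{x_i\}}$, yielding $\dim(\overline{C})\le\dim(U)\le\dim(C)\le\dim(\overline{C})$. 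Your proof becomes complete once you replace the appeal to Proposition~\ref{p:dim-subsets} at this point by that equidimensionality statement; everything else in your proposal is correct and matches the paper.
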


\begin{proof}
Assertions (i) and (ii)  follow for instance from Assertions~(i) and~(iii)
in \cite[Lemma~3.4]{cscp-alg-ca}. 
\par
Since $X$ is Jacobson, we have that $\dim(X_0) = \dim(X)$ by 
Corollary~\ref{c:dim-closed-points-jacob}.
To prove that $\dim(X) < \infty$, as every scheme is locally affine,
we can assume, by virtue of  Proposition~\ref{p:dim-subsets}.(iii), that $X$ is affine.  
Then  $X = \Spec(R)$ for some finitely generated $K$-algebra $R$.
By the Noether normalization lemma, there exist an integer $d \geq 0$ and an injective 
$K$-algebra morphism $K[t_1,\dots,t_d] \to R$ such that $R$ is a finitely generated 
$K[t_1,\dots,t_d]$-module.
This implies $\dim(X) = \dim(R) = d < \infty$ (see \cite[Corollary~5.17]{gorz})
and completes the proof of (iii).
\par
Assertions~(iv) and~(v) follow  from (i) and Proposition~\ref{p:construct-inJacob}.
\par
From (i) and Proposition~\ref{p:construct-inJacob}.(iii), we deduce that
$\dim(C \cap X_0) = \dim(C)$.
Thus, to complete the proof of~(vi), it remains only to show that $\dim(C) = \dim(\overline{C})$.
To see this, we first observe that $C$ contains an open dense subset $U$ of $\overline{C}$ since $C$ is constructible.
Let us equip $\overline{C} \subset X$ with its induced reduced closed subscheme structure.
Then $U$ is an open subscheme of $\overline{C}$ 
and both  $\overline{C}$ and $U$ are $K$-algebraic varieties.
Since $U$ is Noetherian, it admits finitely many irreducible components.
Let $x_1,\dots, x_n$ denote the generic points of the irreducible components of $U$ and
consider their closures $\overline{\{x_1\}}, \dots, \overline{\{x_n\}}$  in $X$,
equipped with their induced reduced closed subscheme structure.
As $U$ is dense in $\overline{C}$, we have that
$\overline{C} = \bigcup_{1 \leq i\leq n} \overline{\{x_i\}}$.
Since each  $\overline{\{x_i\}}$ is a closed irreducible subset of $\overline{C}$, we deduce from
Proposition~\ref{p:dim-subsets}.(v) 
that
\begin{equation}
\label{e:1}
\dim(\overline{C})= \max_{1 \leq i \leq n} \dim(\overline{\{x_i\}}).
\end{equation}
Now observe that, for all $1 \leq i \leq n$,  
the set $U \cap \overline{\{x_i\}}$ is an open subset of  $\overline{\{x_i\}}$
that  is non-empty 
since $x_i \in U$. 
Hence, Theorem~5.22.(3) in~\cite{gorz} applied to the irreducible algebraic varieties $\overline{\{x_i\}}$ implies that 
\begin{equation}
\label{e:2}
\dim(\overline{\{x_i\}}) = \dim(U \cap \overline{\{x_i\}}) \leq \dim(U), 
\end{equation}
where the last inequality follows from Proposition~\ref{p:dim-subsets}.(i).
We deduce from~\eqref{e:1}, \eqref{e:2}, and Proposition~\ref{p:dim-subsets}.(i) that $\dim(\overline{C}) \leq \dim(U) \leq \dim(C)$. 
As $\dim(C) \leq \dim(\overline{C})$ by Proposition~\ref{p:dim-subsets}.(i),
we conclude that
\begin{equation}
\label{e:equal-dimU-C-Cbar}  
\dim(U) = \dim(C) = \dim(\overline{C}).
\end{equation} 
This completes the proof of (vi).
\par
Assertion (vii) is an immediate consequence of (v). 
\end{proof}

\begin{proposition}
\label{p:dim-image}
Let $X$ and $Y$ be  algebraic varieties over a field $K$ and let $f \colon X \to Y$ be a $K$-scheme morphism.
Let $C$ be a constructible subset of $X$.
Then the following hold:
\begin{enumerate}[\rm (i)]
\item 
$f(C)$ is a constructible subset of $Y$;
\item
$f(C_0) = (f(C))_0$;
\item
 $\dim(f(C)) \leq \dim(C)$;
 \item 
$f(X_0) \subset Y_0$;
 \item
 $\dim(f(X)) \leq \dim(X)$;
 \item
 if $E$ is a constructible subset of $X_0$, then $f(E)$ is a constructible subset of $Y_0$ and one has
 $\dim(f(E)) \leq  \dim(E)$.
 \end{enumerate}
\end{proposition}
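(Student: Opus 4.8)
The plan is to establish the six assertions in the order (i), (iv), (ii), (iii), (v), (vi), since the later ones rest on the earlier ones. For (i) I would invoke Chevalley's theorem on images of constructible sets: since $X$ and $Y$ are algebraic varieties over $K$ they are Noetherian schemes (Proposition~\ref{p:dim-alg-var}.(i)) and $f$ is of finite type, so the image $f(C)$ of the constructible set $C$ is again constructible (see e.g.~\cite{gorz}). For (iv) I would argue at the level of residue fields. Let $x \in X_0$ and set $y = f(x)$. As $X$ is a $K$-variety, $x$ closed forces $\kappa(x)$ to be a finite extension of $K$ (Zariski's lemma). The morphism $f$ induces a $K$-embedding $\kappa(y) \hookrightarrow \kappa(x)$, whence $[\kappa(y):K] \leq [\kappa(x):K] < \infty$; by the same characterization of closed points of a $K$-variety, $y \in Y_0$.

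The inclusion $f(C_0) \subseteq (f(C))_0$ in (ii) is then immediate from (iv) together with $C_0 = C \cap X_0$ and $(f(C))_0 = f(C) \cap Y_0$ (Proposition~\ref{p:dim-alg-var}.(v), using (i) to know $f(C)$ is constructible). The reverse inclusion is the \emph{crux} of the whole proposition and where I expect the main work. Given a closed point $y \in f(C) \cap Y_0$, I would consider the fiber $D \coloneqq C \cap f^{-1}(y)$. Since $y$ is closed, $f^{-1}(y)$ is closed in $X$, so $D$ is constructible, and it is non-empty because $y \in f(C)$. As a non-empty constructible subset of the Jacobson scheme $X$, $D$ is itself Jacobson (Proposition~\ref{p:dim-alg-var}.(iv)) and hence possesses a closed point $x'$, which by $D_0 = D \cap X_0$ lies in $X_0$. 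Then $x' \in C_0$ and $f(x') = y$, giving $y \in f(C_0)$.

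For (iii) I would reduce to the irreducible closed case. Writing $\overline{C} = \bigcup_i Z_i$ as the union of its finitely many irreducible components and using $\dim(C) = \dim(\overline{C}) = \max_i \dim(Z_i)$ (Proposition~\ref{p:dim-alg-var}.(vi) and Proposition~\ref{p:dim-subsets}.(v)), together with $\overline{f(C)} = \overline{f(\overline{C})} = \bigcup_i \overline{f(Z_i)}$, it suffices to prove $\dim(f(Z)) \leq \dim(Z)$ for $Z$ irreducible closed. Here $\overline{f(Z)}$ is irreducible and $f$ restricts to a dominant morphism $Z \to \overline{f(Z)}$ of irreducible $K$-varieties; comparing function fields, the induced $K$-embedding $K(\overline{f(Z)}) \hookrightarrow K(Z)$ yields $\dim \overline{f(Z)} = \operatorname{trdeg}_K K(\overline{f(Z)}) \leq \operatorname{trdeg}_K K(Z) = \dim Z$. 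Since $\dim(f(Z)) = \dim(\overline{f(Z)})$ by Proposition~\ref{p:dim-alg-var}.(vi), this gives the claim, and (v) follows at once by taking $C = X$.

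Finally, (vi) would be deduced from the dimension-preserving bijection of Corollary~\ref{c:constr-Jacob-bije}: given $E$ constructible in $X_0$, let $C$ be the unique constructible subset of $X$ with $C \cap X_0 = E$, so that $E = C_0$ and $\dim E = \dim C$. Then $f(E) = f(C_0) = (f(C))_0 = f(C) \cap Y_0$ by (ii), which is constructible in $Y_0$ by (i) and Corollary~\ref{c:constr-Jacob-bije}, and $\dim f(E) = \dim f(C) \leq \dim C = \dim E$ by (iii). The one genuinely delicate point throughout is the $\supseteq$ direction of (ii), namely making precise that a closed point of the image is hit by a closed point of the source; this rests entirely on the Jacobson property of the fiber $C \cap f^{-1}(y)$ guaranteeing the existence of a closed point there.
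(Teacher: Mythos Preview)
Your proof is correct and in places more self-contained than the paper's. The overall architecture for (i), (v), and (vi) matches the paper, but there are two genuine differences worth noting.

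For (ii) and (iv), the paper simply cites an external reference (\cite[Lemma~3.6.(v)]{cscp-alg-ca}) for (ii) and then deduces (iv) from (ii) by taking $C = X$. You go in the opposite order: you prove (iv) directly via residue fields and Zariski's lemma, and then use (iv) together with the Jacobson property of the fiber $C \cap f^{-1}(y)$ to give a complete argument for (ii). Your route makes the proposition fully self-contained and correctly identifies the $\supseteq$ direction of (ii) as the only place requiring care.

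For (iii), the paper works from the \emph{target} side: it picks a dense open $V \subset \overline{f(C)}$, takes the generic points $y_1,\dots,y_m$ of the irreducible components of $V$, lifts each $y_i$ to some $x_i \in C$, and then compares $\dim \overline{\{y_i\}}$ with $\dim \overline{\{x_i\}}$ via the induced dominant morphism $\overline{\{x_i\}} \to \overline{\{y_i\}}$, invoking \cite[Theorem~5.22.(3)]{gorz}. You instead work from the \emph{source} side: decompose $\overline{C}$ into its irreducible components $Z_i$, push forward, and compare transcendence degrees of function fields along the dominant morphisms $Z_i \to \overline{f(Z_i)}$. Both routes rest on the same underlying fact (dimension does not increase under a dominant morphism of integral $K$-varieties), but your decomposition is slightly cleaner since it avoids the auxiliary open set $V$ and the lifting step.
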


\begin{proof}
Assertion~(i) is Chevalley's theorem (see e.g.~\cite[Th\'eor\`eme~1.8.4]{grothendieck-20-1964}, 
\cite[p.~93]{hartshorne},  
\cite[Theorem~7.4.2]{vakil}).
\par
For~(ii),
see e.g.~\cite[Lemma~3.6.(v)]{cscp-alg-ca}.
\par
To prove (iii), first observe that $D \coloneqq f(C)$ is a constructible subset of $Y$ by (i).
Therefore $D$ contains a dense open subset $V$ of $\overline{D}$.
Let $y_1,\dots, y_m$ denote the generic points of the irreducible components of $V$
(see the proof of Proposition~\ref{p:dim-alg-var}.(vi)).
As $V \subset D$, 
there exist points $x_1,\dots, x_m \in C$ such that $f(x_i)=y_i$ for
$1 \leq i \leq m$.
 Consider the closure  $\overline{\{x_i\}}$ (resp.~$\overline{\{y_i\}}$)
of the singletons $\{x_i\}$ (resp.~$\{y_i\}$) in $X$ (resp.$Y$),
equipped with their induced reduced closed subscheme structures.
For $1 \leq i \leq m$, 
let $f_i\colon \overline{\{x_i\}} \to \overline{\{y_i\}}$ be the dominant $K$-scheme morphism induced by $f$ (cf.~\cite[Proposition~I.5.2.2]{grothendieck-ega-1}). 
It follows from Theorem~5.22.(3) in~\cite{gorz} that 
\begin{equation}
\label{e:dim-x-i-geq-dim-y-i}
\dim(\overline{\{y_i\}}) \leq \dim(\overline{\{x_i\}}) \text{  for all } 1 \leq i \leq m.
\end{equation}
On the other hand, we have that
$\overline{D} = \bigcup_{1 \leq i\leq m} \overline{\{y_i\}}$
since $V$ is dense in $\overline{D}$,
so that
\begin{equation}
\label{e:dim-clos-image-C}
\dim(\overline{f(C)}) = \max_{1 \leq i \leq n} \dim(\overline{\{y_i\}})
\end{equation}
by applying Proposition~\ref{p:dim-subsets}.(v).
From~\eqref{e:dim-clos-image-C} and~\eqref{e:dim-x-i-geq-dim-y-i}, we get
\begin{equation}
\label{e:ineq-clos-image}
\dim(\overline{D}) \leq \max_{1 \leq i \leq n} \dim(\overline{\{x_i\}}) \leq \dim(\overline{C}), 
\end{equation}
where the last inequality follows from Proposition~\ref{p:dim-subsets}.(i).
Now, as $C \subset X$ and $D \subset Y$ are constructible subsets,
we have  that  $\dim(C) = \dim(\overline{C})$ and 
$\dim(D) = \dim(\overline{D})$ by Proposition~\ref{p:dim-alg-var}.(vi).
Therefore, inequality~\eqref{e:ineq-clos-image} gives us $\dim(D) \leq \dim(C)$.
This completes the proof of~(iii).   
\par
Assertions~(iv) and~(v)
are deduced from~(ii) and ~(iii) 
after taking $C = X$.
\par
Suppose now that  $E$ is a constructible subset of $X_0$.
 Then $E = C \cap X_0$ for some constructible subset $C \subset X$ 
 by Corollary~\ref{c:constr-Jacob-bije}.
We then have  $f(E) = f(C) \cap Y_0$ by virtue of (i), (ii), and 
Proposition~\ref{p:dim-alg-var}.(v),
and hence
\[ 
\dim(f(E))  = \dim(f(C) \cap Y_0) = \dim(f(C)) \leq \dim(C) = \dim(C \cap X_0) = \dim(E)
\]
by using (i), (iii), and Proposition~\ref{p:dim-alg-var}.(vi). This shows (vi).
\end{proof}

\begin{proposition}
\label{p:dim-fiber}
Let $X$ and $Y$ be  algebraic varieties over a field $K$ and let $f \colon X \to Y$ be a $K$-scheme morphism.
For $x \in X$, let $y \coloneqq f(x)$.
Then the following hold:
\begin{enumerate}[\rm (i)]
\item 
there exists a closed point $x \in X$ such that
\begin{equation}
\label{e:dim-fiber}
\dim(f^{-1}(y)) \geq \dim(X)-\dim(Y); 
\end{equation}
\item
if $X$ and $Y$ are both irreductible, then
Inequality~\eqref{e:dim-fiber} is satisfied for every closed point $x \in X$.
\end{enumerate}
\end{proposition}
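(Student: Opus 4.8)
The plan is to reduce everything to the classical theorem on the dimension of the fibres of a dominant morphism of irreducible varieties, which I will invoke as a black box: if $g \colon Z \to W$ is a dominant $K$-morphism of irreducible $K$-algebraic varieties, then for every point $w \in g(Z)$ one has $\dim(g^{-1}(w)) \geq \dim(Z) - \dim(W)$ (the standard theorem on the dimension of fibres; see e.g.~\cite[Exercise~II.3.22]{hartshorne}). Throughout I will use that the underlying topological space of the scheme-theoretic fibre $X \times_Y \Spec(\kappa(y))$ is canonically homeomorphic to the set-theoretic fibre $f^{-1}(y)$ with its subspace topology, so that the two Krull dimensions agree, and that Krull dimension is unchanged by passing to reduced structures; all varieties below may therefore be assumed reduced.

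I would prove (ii) first. Assume $X$ and $Y$ irreducible and let $x \in X$ be any closed point, $y \coloneqq f(x)$. Set $Y' \coloneqq \overline{f(X)}$, equipped with its reduced closed subscheme structure; since $f(X)$ is irreducible (being the image of the irreducible $X$), $Y'$ is an irreducible closed subvariety of $Y$, and, as $X$ is reduced, $f$ factors through $Y'$ to give a dominant morphism $g \colon X \to Y'$. Because $y \in Y'$ and $Y' \hookrightarrow Y$ is a closed immersion, the scheme-theoretic fibres coincide, $f^{-1}(y) = g^{-1}(y)$, so $\dim(f^{-1}(y)) = \dim(g^{-1}(y)) \geq \dim(X) - \dim(Y')$ by the fibre-dimension estimate applied to $g$. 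Finally $\dim(Y') \leq \dim(Y)$ by Proposition~\ref{p:dim-subsets}.(i) (by irreducibility of $Y$ and Proposition~\ref{p:dim-subsets}.(ii) in fact either $Y' = Y$ or $\dim(Y') < \dim(Y)$, but only the inequality is needed), whence $\dim(f^{-1}(y)) \geq \dim(X) - \dim(Y)$, as required.

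Then (i) follows by restricting to a top-dimensional irreducible component. Since $X$ is Noetherian (Proposition~\ref{p:dim-alg-var}.(i)) it has finitely many irreducible components, and by Proposition~\ref{p:dim-subsets}.(iv) I may choose one, say $Z$, with $\dim(Z) = \dim(X)$; give it its reduced structure, so that $Z$ is an irreducible closed subvariety of $X$. As a non-empty $K$-variety, $Z$ has a closed point $x$, and $x$ is then closed in $X$ because $Z$ is closed in $X$. Put $W \coloneqq \overline{f(Z)}$ with its reduced structure and let $g \colon Z \to W$ be the dominant morphism induced by $f$. Applying (ii) to $g$ (both $Z$ and $W$ are irreducible) gives $\dim(g^{-1}(y)) \geq \dim(Z) - \dim(W) \geq \dim(X) - \dim(Y)$, where $y \coloneqq f(x)$ and where I used $\dim(Z) = \dim(X)$ together with $\dim(W) \leq \dim(Y)$ (Proposition~\ref{p:dim-subsets}.(i)). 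Since $Z \hookrightarrow X$ and $W \hookrightarrow Y$ are closed immersions and $y \in W$, the fibre $g^{-1}(y) = Z \times_Y \Spec(\kappa(y))$ is a closed subscheme of $f^{-1}(y) = X \times_Y \Spec(\kappa(y))$; hence $\dim(f^{-1}(y)) \geq \dim(g^{-1}(y)) \geq \dim(X) - \dim(Y)$ by Proposition~\ref{p:dim-subsets}.(i), proving (i) for this closed point $x$.

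The only genuinely nontrivial input is the fibre-dimension estimate for dominant morphisms of irreducible varieties; once it is granted, the remaining work — reducing to the dominant case via the closure of the image, reducing (i) to (ii) via a maximal-dimensional component, and comparing the scheme-theoretic fibres through the relevant closed immersions — is elementary and uses only the dimension-theoretic facts recorded in Propositions~\ref{p:dim-subsets} and~\ref{p:dim-alg-var}. The point requiring the most care is the identification of fibres under base change along closed immersions, namely that $f^{-1}(y)$ and $g^{-1}(y)$ literally coincide in the dominant reduction for (ii) and that $g^{-1}(y)$ embeds as a closed subscheme of $f^{-1}(y)$ for (i); this is where I expect the bookkeeping, rather than any real difficulty, to lie.
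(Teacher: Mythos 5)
Your proof is correct, and part (i) follows the paper's reduction exactly (restrict to a top-dimensional irreducible component $Z$, pass to $\overline{f(Z)}$, apply the irreducible case, and compare fibres through the closed immersions). Where you genuinely diverge is in the key input for part (ii). The paper does not reduce to a dominant morphism at all: it invokes the local inequality $\dim(\OO_{X_y,x}) \geq \dim(\OO_{X,x}) - \dim(\OO_{Y,y})$ for morphisms of locally Noetherian schemes (\cite[Theorem~4.3.12]{liu-alg-geom}), combined with the fact that at a closed point of an irreducible $K$-variety the local ring has dimension equal to that of the variety (\cite[Corollaire~2.5.24]{liu-alg-geom}), and the homeomorphism $X_y \simeq f^{-1}(y)$. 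You instead first factor $f$ through $Y' = \overline{f(X)}$ to obtain a dominant morphism and then quote the classical global theorem on fibre dimensions of dominant morphisms of irreducible varieties. Both are legitimate; the paper's local-ring route avoids the dominance reduction entirely (the inequality holds for an arbitrary morphism, with $\dim(\OO_{Y,y}) = \dim(Y)$ at any closed point of irreducible $Y$), while your route leans on a more familiar, globally stated black box at the cost of the extra factorization step and the bookkeeping you flag about identifying $f^{-1}(y)$ with $g^{-1}(y)$ under the closed immersion $Y' \hookrightarrow Y$ — bookkeeping which you handle correctly, since the residue fields agree and only the underlying topological spaces matter for Krull dimension. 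The classical theorem you cite is itself usually deduced from essentially the local statement the paper uses, so the two arguments are close in substance even though the stated key lemmas differ.
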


\begin{proof}
Consider the \emph{geometric fiber} of $f$ at $y$, 
that is, the $Y$-fibered product
$X_y \coloneqq X \times_Y \kappa(y)$, where $\kappa(y)$ is the residue field of $Y$ at $y$,
and recall that the first projection morphism $X_y \to X$ induces a homeomorphism from $X_y$ onto $f^{-1}(y)$ 
(cf. \cite[Proposition~3.1.16]{liu-alg-geom}).
As $X$ and $Y$ are Noetherian schemes,
it follows from Theorem~4.3.12 in~\cite{liu-alg-geom} that
\begin{equation}
\label{e:ineq-geom-fiber-dim}
\dim(\OO_{X_y,x}) \geq \dim(\OO_{X,x})-\dim(\OO_{Y,y})
\end{equation}
for all $x \in X$.
\par
Suppose first that $X$ and $Y$ are irreducible.
If $x$ is a closed point of $X$, then $y=f(x)$ is a closed point of $Y$
(see e.g.~Lemma~3.6 in~\cite{cscp-alg-ca}).  
By applying Corollary~2.5.24 in~\cite{liu-alg-geom}, we then get
\[
\dim(\OO_{X,x})=\dim(X) \quad \text{and} \quad \dim(\OO_{Y,y})=\dim(Y),
\]
so that Assertion~(ii) follows
from~\eqref{e:ineq-geom-fiber-dim} and  the general fact that
$\dim(\OO_{X_y,x})\leq \dim(X_y)=\dim(f^{-1}(y))$.
\par
To prove  Assertion~(i), consider an irreducible component $Z$ of $X$ such that $\dim(Z)=\dim(X)$ and the closure  $V=\overline{f(Z)} \subset Y$ of its image. 
As the closure of every irreducible subset is itself irreducible,  $V$ is also irreducible. 
We equip $Z$ and $V$ with their induced reduced closed subscheme structures and denote by 
$\iota \colon Z \to X$ the  closed immersion associated with $Z$. 
By \cite[Proposition~I.5.2.2]{grothendieck-ega-1}, $f\circ \iota$ induces a $K$-morphism of irreducible algebraic varieties $h \colon Z \to V$. 
Let $x \in Z$ be a closed point and $y=h(x)=f(x)$. 
Then by what we proved above for the irreducible case (Assertion~(ii)), we conclude that  
\begin{equation*}
\dim(f^{-1}(y)) \geq \dim (h^{-1}(y)) 
 \geq \dim(Z)-\dim(V) \geq \dim(X) - \dim(Y),
\end{equation*}
where the first inequality follows from the inclusion $f^{-1}(y) \supset h^{-1}(y)$ and 
the last one  from  the inclusion $V \subset Y$. 
   \end{proof}

\begin{proposition}
\label{p:prod-alg-var}
Let $X$ and $Y$ be algebraic varieties over a field $K$ and let $X \times_K Y$ denote their $K$-fibered product.
Then the following hold:
\begin{enumerate}[\rm (i)]
\item
$X \times_K Y$ is a $K$-algebraic variety;
\item
$(X \times_K Y)(K) = X(K) \times Y(K)$;
\item
$\dim(X \times_K Y)= \dim(X) + \dim(Y)$.
\end{enumerate}
\end{proposition}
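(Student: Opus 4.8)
The plan is to treat the three assertions separately, the substance lying entirely in~(iii). For~(i), I would use the stability of the finite-type property under base change and composition: the projection $X \times_K Y \to X$ is the base change of the finite-type morphism $Y \to \Spec(K)$ along $X \to \Spec(K)$, hence is of finite type, and composing with the finite-type structure morphism $X \to \Spec(K)$ shows that $X \times_K Y \to \Spec(K)$ is of finite type, i.e.\ that $X \times_K Y$ is a $K$-algebraic variety. For~(ii), I would appeal directly to the universal property of the fibered product: for every $K$-scheme $T$ there is a natural bijection $\Mor_K(T, X \times_K Y) \cong \Mor_K(T, X) \times \Mor_K(T, Y)$, and taking $T = \Spec(K)$ gives $(X \times_K Y)(K) = X(K) \times Y(K)$, the compatibility condition over the base being vacuous since we work with $K$-schemes.

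For~(iii) I would first reduce to the affine case. As $X$ and $Y$ are Noetherian by Proposition~\ref{p:dim-alg-var}.(i), they are quasi-compact, so I may choose finite affine open covers $X = \bigcup_{i \in I} U_i$ and $Y = \bigcup_{j \in J} V_j$. The products $U_i \times_K V_j$ form an affine open cover of $X \times_K Y$, whence Proposition~\ref{p:dim-subsets}.(iii) gives $\dim(X \times_K Y) = \max_{i,j} \dim(U_i \times_K V_j)$, $\dim(X) = \max_i \dim(U_i)$, and $\dim(Y) = \max_j \dim(V_j)$; since all these dimensions are finite (Proposition~\ref{p:dim-alg-var}.(iii)), it suffices to prove the formula for the affine pieces and then observe that $\max_{i,j}(\dim(U_i) + \dim(V_j)) = \max_i \dim(U_i) + \max_j \dim(V_j)$. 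Writing $U_i = \Spec(A)$ and $V_j = \Spec(B)$ with $A, B$ finitely generated $K$-algebras, one has $U_i \times_K V_j = \Spec(A \otimes_K B)$, and the claim reduces to $\dim(A \otimes_K B) = \dim(A) + \dim(B)$. For this I would apply the Noether normalization lemma, exactly as in the proof of Proposition~\ref{p:dim-alg-var}.(iii): there exist injective finite $K$-algebra maps $K[t_1, \dots, t_d] \hookrightarrow A$ and $K[s_1, \dots, s_e] \hookrightarrow B$ with $d = \dim(A)$ and $e = \dim(B)$. Tensoring over $K$, which is exact and hence preserves both injectivity and module-finiteness, yields an injective finite ring extension $K[t_1, \dots, t_d, s_1, \dots, s_e] = K[t_1, \dots, t_d] \otimes_K K[s_1, \dots, s_e] \hookrightarrow A \otimes_K B$, and since an integral extension preserves Krull dimension I conclude $\dim(A \otimes_K B) = d + e = \dim(A) + \dim(B)$.

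I expect the main obstacle to be precisely this additivity of Krull dimension under tensor product in the affine case. The two points needing care are that injectivity of the normalization maps survives tensoring over $K$ --- which uses flatness of $A$ and of $K[s_1, \dots, s_e]$ over the field $K$ --- and that the tensor product of module-finite extensions is again module-finite, so that $K[t_1, \dots, t_d, s_1, \dots, s_e] \hookrightarrow A \otimes_K B$ is integral and the standard dimension equality for integral extensions applies. Everything else is formal manipulation of the universal property and of the open-cover formula for Krull dimension.
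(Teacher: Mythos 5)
Your proof is correct. The paper itself gives no argument here: it disposes of (i) and (iii) by citing G\"ortz--Wedhorn (Lemma~4.22, Propositions~5.37 and~5.50) and of (ii) by the universal property, exactly as you do. What you have written for (iii) is essentially the standard proof underlying the cited results: reduce to the affine case via a finite affine open cover $\{U_i \times_K V_j\}$ and Proposition~\ref{p:dim-subsets}.(iii), then prove $\dim(A \otimes_K B) = \dim(A) + \dim(B)$ by Noether normalization and the invariance of Krull dimension under injective integral extensions. The two delicate points you flag are indeed the ones that matter, and you handle them correctly: injectivity of the tensored normalization maps holds because every module over the field $K$ is flat (this is where the hypothesis that the base is a field is genuinely used --- the additivity of dimension can fail for tensor products over more general base rings), and module-finiteness passes to the tensor product since $A \otimes_K B$ is generated over $K[t_1,\dots,t_d,s_1,\dots,s_e]$ by the elements $a_i \otimes b_j$. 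Your treatment of (i) via stability of finite-type morphisms under base change and composition, and of (ii) via the universal property with $T = \Spec(K)$, matches the paper.
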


\begin{proof}
Assertion~(i) follows from Lemma~4.22 in~\cite{gorz}.
\par
Assertion~(ii) is an immediate consequence of the universal property of $K$-fibered products.
\par
Assertion~(iii) follows from Proposition~5.37  and Proposition 5.50 in~\cite{gorz}.
\end{proof}

\begin{proposition}
\label{p:prod-alg-var-alg-closed}
Let $X$ and $Y$ be algebraic varieties over an algebraically closed  field $K$ and let $X \times_K Y$ denote their $K$-fibered product.
Let $C$ (resp.~$D$) be a constructible subset of $X$ (resp.~$Y$). 
Then the following hold:
\begin{enumerate}[\rm (i)]
\item
$(X \times_K Y)_0 = X_0 \times Y_0$;
\item
$C_0 \times D_0 \subset (X \times_K Y)_0$;
\item 
the set $C_0 \times D_0 \subset (X \times_K Y)_0 \subset X \times_K Y$ being equipped with the Zariski topology, one has that 
$\dim(C_0\times D_0) =\dim(C_0) +\dim(D_0)$;
\item
if $X$ and $Y$ are irreducible, then $X \times_K Y$ is irreducible.
\end{enumerate}
\end{proposition}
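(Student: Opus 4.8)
The plan is to reduce every assertion to two inputs: the identification of closed points with $K$-points (Proposition~\ref{p:closed-points-alg-closed-var} and Remark~\ref{rem:closed-points-rat-points}) and the product formulas of Proposition~\ref{p:prod-alg-var}. Write $p_X \colon X \times_K Y \to X$ and $p_Y \colon X \times_K Y \to Y$ for the two projections. For (i), I would introduce the map $(p_X, p_Y) \colon (X \times_K Y)_0 \to X_0 \times Y_0$, which is well defined because morphisms of $K$-algebraic varieties carry closed points to closed points (Proposition~\ref{p:dim-image}.(iv)). To see it is a bijection, I would fit it into the square formed by the $K$-point bijections $X(K) \xrightarrow{\sim} X_0$, $Y(K) \xrightarrow{\sim} Y_0$, $(X \times_K Y)(K) \xrightarrow{\sim} (X \times_K Y)_0$ of Proposition~\ref{p:closed-points-alg-closed-var} (each sending $f$ to the image $f(\ast)$ of the point of $\Spec(K)$) together with the universal-property identification $(X \times_K Y)(K) = X(K) \times Y(K)$ of Proposition~\ref{p:prod-alg-var}.(ii); commutativity is the tautology $p_X(h(\ast)) = (p_X \circ h)(\ast)$, and the other three arrows being bijections forces $(p_X,p_Y)$ to be one as well. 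Assertion (ii) is then immediate, since $C_0 = C \cap X_0 \subset X_0$ and $D_0 = D \cap Y_0 \subset Y_0$ by Proposition~\ref{p:dim-alg-var}.(v), whence $C_0 \times D_0 \subset X_0 \times Y_0 = (X \times_K Y)_0$ by (i).

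For (iii), the idea is to realise $C_0 \times D_0$ as the closed-point set of an explicit constructible subset of $X \times_K Y$. I would set $E \coloneqq p_X^{-1}(C) \cap p_Y^{-1}(D)$; since the preimage of a constructible set under a continuous map is constructible and constructible sets are stable under intersection, $E$ is constructible. A short check of projections using (i) gives $E_0 = E \cap (X \times_K Y)_0 = C_0 \times D_0$, so that $\dim(C_0 \times D_0) = \dim(E_0) = \dim(E)$ by Proposition~\ref{p:dim-alg-var}.(vi). It remains to sandwich $\dim(E)$. On one side, $E \subset p_X^{-1}(\overline{C}) \cap p_Y^{-1}(\overline{D}) = \overline{C} \times_K \overline{D}$, so $\dim(E) \leq \dim(\overline{C}) + \dim(\overline{D})$ by Proposition~\ref{p:dim-subsets}.(i) and Proposition~\ref{p:prod-alg-var}.(iii). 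On the other side, choosing (as in the proof of Proposition~\ref{p:dim-alg-var}.(vi)) dense open subschemes $U \subset \overline{C}$ and $V \subset \overline{D}$ with $U \subset C$, $V \subset D$, $\dim(U) = \dim(\overline{C})$, and $\dim(V) = \dim(\overline{D})$, the open subscheme $U \times_K V$ of $\overline{C} \times_K \overline{D}$ is contained in $E$ and satisfies $\dim(U \times_K V) = \dim(U) + \dim(V)$. Combining the two bounds gives $\dim(E) = \dim(\overline{C}) + \dim(\overline{D}) = \dim(C) + \dim(D) = \dim(C_0) + \dim(D_0)$, the last two equalities again by Proposition~\ref{p:dim-alg-var}.(vi).

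For (iv), since $X \times_K Y$ is a $K$-algebraic variety (Proposition~\ref{p:prod-alg-var}.(i)) it is Jacobson, so by Corollary~\ref{c:jacob-irred} it suffices to prove that $(X \times_K Y)_0 = X_0 \times Y_0$ is irreducible, while the same corollary guarantees that $X_0$ and $Y_0$ are irreducible. I would run the classical fibration argument: given a decomposition $X \times_K Y = Z_1 \cup Z_2$ into closed subsets, each fibre $p_X^{-1}(x) \cong Y \times_K \kappa(x) \cong Y$ over a closed point $x \in X_0$ (note $\kappa(x) = K$) is irreducible, hence contained in $Z_1$ or in $Z_2$. Putting $X_i \coloneqq \{x \in X_0 : p_X^{-1}(x) \subset Z_i\}$, I would show each $X_i$ is closed in $X_0$ by writing it as $X_0 \cap \bigcap_{y \in Y_0} \iota_y^{-1}(Z_i)$, where $\iota_y \colon X \to X \times_K Y$ is the graph of the constant morphism with value $y$; here one uses that the fibre is Jacobson, so that its containment in the closed set $Z_i$ is detected on the closed points $\{x\} \times Y_0$. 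Irreducibility of $X_0 = X_1 \cup X_2$ then forces, say, $X_1 = X_0$, giving $X_0 \times Y_0 \subset Z_1$; as $(X \times_K Y)_0$ is dense and $Z_1$ is closed, $Z_1 = X \times_K Y$. The steps (i), (ii), and the bookkeeping in (iii) are routine; the genuinely substantive point, which I expect to be the main obstacle, is the closedness of the sets $X_i$ in (iv), since it requires moving between the scheme and its closed points and exploiting that the fibres $p_X^{-1}(x)$ are irreducible and Jacobson. Alternatively, one may simply invoke the standard fact that over an algebraically closed field the product of two irreducible algebraic varieties is irreducible (geometric irreducibility), for which see \cite{gorz}.
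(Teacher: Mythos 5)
Your proposal is correct, and for assertions (i)--(iii) it follows essentially the paper's own route: (i) and (ii) come down to the identification $(X \times_K Y)(K) = X(K) \times Y(K)$ of Proposition~\ref{p:prod-alg-var}.(ii) combined with Proposition~\ref{p:closed-points-alg-closed-var}, and (iii) is the same sandwich between $U \times_K V$ and $\overline{C} \times_K \overline{D}$ using Propositions~\ref{p:dim-subsets}.(i), \ref{p:dim-alg-var}.(vi) and \ref{p:prod-alg-var}.(iii). Your only real reorganization in (iii) is to route the argument through the auxiliary constructible set $E \coloneqq p_X^{-1}(C) \cap p_Y^{-1}(D)$ with $E_0 = C_0 \times D_0$, so that $\dim(C_0 \times D_0) = \dim(E)$ follows in one stroke from Proposition~\ref{p:dim-alg-var}.(vi); the paper instead squeezes $C_0 \times D_0$ directly between $(U \times_K V)_0$ and $(\overline{C} \times_K \overline{D})_0$ and computes the dimensions of both ends. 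The two are interchangeable, though yours leans on the (correct, but worth stating) fact that preimages of constructible sets under continuous maps are constructible and on the set-theoretic identification of $\overline{C} \times_K \overline{D}$ with $p_X^{-1}(\overline{C}) \cap p_Y^{-1}(\overline{D})$, which the paper also uses implicitly. The genuine divergence is in (iv): the paper simply cites Proposition~5.50 of the G\"ortz--Wedhorn reference, whereas you spell out the classical fibration argument (irreducible closed fibres over closed points, closedness of the sets $X_i$ via the graphs $\iota_y$ and the Jacobson property of the fibres, then density of closed points). Your argument is sound -- the key points, namely $\kappa(x) = K$ at closed points and the detection of $p_X^{-1}(x) \subset Z_i$ on $\{x\} \times Y_0$, are exactly where algebraic closedness and the Jacobson property enter -- and it has the merit of making visible why the statement fails over non-algebraically-closed fields (cf.\ the remark following the proposition), at the cost of roughly a page against a one-line citation.
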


\begin{proof}
Assertion~(i) immediately follows from Remark~\ref{rem:closed-points-rat-points} and Assertions~(i) and~(ii) in Proposition~\ref{p:prod-alg-var}.
\par
Since $C_0 \subset X_0$ and $D_0 \subset Y_0$ by Proposition \ref{p:dim-alg-var}.(ii), 
we have that
\begin{equation*}
C_0 \times D_0 \subset X_0 \times Y_0 = (X \times_K Y)_0
\end{equation*}
by using (i). This shows (ii).
\par
Since $C$ (resp.~$D$) is a constructible subset of $X$ (resp.~$Y$),
it contains an open dense subset $U$ (resp.~$V$) of $\overline{C}$ (resp.~$\overline{D}$).
Let us equip $\overline{C}$ and $\overline{D}$ with their induced reduced closed subscheme structure.
Thus $U$, $V$, $\overline{C}$, and $\overline{D}$ are now viewed as $K$-algebraic varieties. 
By properties of base change, $U \times_K V$ is  an open subscheme of 
$\overline{C} \times_K \overline{D}$, 
which is in turn a closed subscheme of $X\times_K Y$.
By applyig Proposition~\ref{p:dim-alg-var}.(vii), we have that
$U_0 \subset C_0 \subset \overline{C}$ and $V_0 \subset D_0 \subset \overline{D}$, so that
\begin{equation}
\label{e:C0-D0-between}
(U \times_K V)_0 = U_0 \times V_0 \subset C_0 \times D_0 \subset (\overline{C})_0 \times (\overline{D})_0 = (\overline{C} \times_K \overline{D})_0 \subset \overline{C} \times_K \overline{D},
\end{equation}
where the two equalities follow from (i).
By using Proposition~\ref{p:dim-subsets}.(i), we deduce from~\eqref{e:C0-D0-between} that
\begin{equation}
\label{e:dim-C0-D0-between}
\dim((U \times_K V)_0) \leq \dim(C_0 \times D_0) \leq \dim(\overline{C} \times_K \overline{D}).
\end{equation}
Now since
\begin{align*}
\dim((U \times_K V)_0) &= \dim(U \times_K V) && \text{(by Proposition~\ref{p:dim-alg-var}.(iii))} \\
&= \dim(U) + \dim(V) && \text{(by Proposition~\ref{p:prod-alg-var}.(iii))} \\
&= \dim(C) + \dim(D) && \text{(by \eqref{e:equal-dimU-C-Cbar})} \\
&= \dim(C_0) + \dim(D_0) && \text{(by Proposition~\ref{p:dim-alg-var}.(vi))}
\end{align*}
and
\begin{align*}
\dim(\overline{C} \times_K \overline{D}) &= \dim(\overline{C}) +  \dim(\overline{D})
&& \text{(by Proposition~\ref{p:prod-alg-var}.(iii))} \\
&= \dim(C_0) + \dim(D_0) && \text{(by Proposition~\ref{p:dim-alg-var}.(vi))}, 
\end{align*}
it follows from~\eqref{e:dim-C0-D0-between} that
$\dim(C_0 \times D_0) = \dim(C_0)+ \dim(D_0)$.
This shows (iii).
\par
Assertion~(iv) follows from Proposition~5.50 in~\cite{gorz}. 
\end{proof}

\begin{remark}
Assertion (iv) in Proposition~\ref{p:prod-alg-var-alg-closed} becomes false if we remove the hypothesis that the field $K$ is algebraically closed.
For example, $X \coloneqq \Spec(\C)$ is an irreducible  algebraic variety over $K \coloneqq \R$ but  
$X \times_K X = \Spec(\C \times \C)$ is not irreducible. 
\end{remark}

\subsection{Projective varieties}

Let $K$ be a field. 
A $K$-algebraic variety $X$ is called a \emph{projective variety} over $K$ if there exists a closed immersion $\iota \colon X \to \Proj^N_K$ for some $N \in \N$ 
(cf.~\cite[Definition~2.3.47]{liu-alg-geom}).  
In that case, we can identify the underlying topological space of $X$ with its image by $\iota$. 

\begin{theorem}[Projective dimension Theorem]
\label{t:proj-dim}
Let $K$ be an algebraically closed field. 
Let $Y,Z$ be closed subschemes of $\Proj^N_K$ of dimension $r,s$ respectively. 
Suppose that $r+s \geq N$. 
Then $Y \cap Z$ is nonempty. 
Equivalently, $Y(K)\cap Z(K) \subset \Proj^N(K)$ is nonempty. 
\end{theorem}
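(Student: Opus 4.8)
The plan is to pass to the affine cones in $\A^{N+1}_K$ and apply the fiber dimension estimate of Proposition~\ref{p:dim-fiber} to the ``difference'' morphism, exploiting the fact that the common vertex of the two cones forces their intersection to be nonempty and then using the dimension count to show this intersection is strictly larger than the vertex alone. First I would reduce to the irreducible case. Since $X = Y$ is Noetherian (Proposition~\ref{p:dim-alg-var}.(i)) and $\dim(Y) = r$, $\dim(Z) = s$ are attained by irreducible components (Proposition~\ref{p:dim-subsets}.(v)), I may choose irreducible components $Y' \subset Y$ and $Z' \subset Z$ with $\dim(Y') = r$ and $\dim(Z') = s$, equip them with their reduced closed subscheme structures, and reduce to proving $Y' \cap Z' \neq \emptyset$. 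Let $\widehat{Y'}, \widehat{Z'} \subset \A^{N+1}_K$ denote the affine cones, i.e.\ the closed subschemes cut out by the homogeneous ideals of $Y', Z'$ in $K[x_0, \dots, x_N]$. I will use the standard facts that the cone over a nonempty irreducible projective variety $W \subset \Proj^N_K$ is again irreducible, contains the origin (vertex) $0 \in \A^{N+1}_K$, satisfies $\dim(\widehat{W}) = \dim(W) + 1$ (see, e.g., \cite[Proposition~I.2.6]{hartshorne}), and has its nonzero closed points mapping exactly onto $W$ under the projection $\A^{N+1}_K \setminus \{0\} \to \Proj^N_K$.

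Next I would set up the difference morphism. Consider the $K$-fibered product $\widehat{Y'} \times_K \widehat{Z'}$, which is irreducible because $K$ is algebraically closed (Proposition~\ref{p:prod-alg-var-alg-closed}.(iv)), and which has
\[
\dim(\widehat{Y'} \times_K \widehat{Z'}) = \dim(\widehat{Y'}) + \dim(\widehat{Z'}) = (r+1) + (s+1) = r + s + 2
\]
by Proposition~\ref{p:prod-alg-var}.(iii). Let $\delta \colon \widehat{Y'} \times_K \widehat{Z'} \to \A^{N+1}_K$ be the $K$-morphism $(u,v) \mapsto u - v$. Via the diagonal identification $(u,u) \mapsto u$, the fiber $\delta^{-1}(0)$ is isomorphic to the scheme-theoretic intersection $\widehat{Y'} \cap \widehat{Z'}$; in particular it contains the common vertex, and the point $(0,0)$ is a closed point of the source since it is a $K$-point (Proposition~\ref{p:prod-alg-var}.(ii) and Proposition~\ref{p:closed-points-alg-closed-var}). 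As $\A^{N+1}_K$ is irreducible of dimension $N+1$, applying the fiber dimension estimate for a morphism of irreducible varieties (Proposition~\ref{p:dim-fiber}.(ii)) at the closed point $(0,0)$ gives
\[
\dim(\widehat{Y'} \cap \widehat{Z'}) = \dim(\delta^{-1}(0)) \geq (r + s + 2) - (N + 1) = r + s - N + 1 \geq 1,
\]
where the final inequality is the hypothesis $r + s \geq N$.

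Finally I would conclude. The intersection $\widehat{Y'} \cap \widehat{Z'}$ is a $K$-algebraic variety with $\dim((\widehat{Y'} \cap \widehat{Z'})_0) = \dim(\widehat{Y'} \cap \widehat{Z'}) \geq 1$ (Proposition~\ref{p:dim-alg-var}.(iii),(vi)), so it possesses a closed point distinct from the vertex $0$. By Proposition~\ref{p:closed-points-alg-closed-var} this is a $K$-point $p \in K^{N+1} \setminus \{0\}$ lying in both cones, whose image under $\A^{N+1}_K \setminus \{0\} \to \Proj^N_K$ is a point of $\Proj^N(K)$ in $Y'(K) \cap Z'(K) \subset Y(K) \cap Z(K)$. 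This yields $Y(K) \cap Z(K) \neq \emptyset$, hence $Y \cap Z \neq \emptyset$; the equivalence of the two formulations is automatic since, $K$ being algebraically closed, the closed points of the closed subscheme $Y \cap Z$ of $\Proj^N_K$ are precisely its $K$-points (Proposition~\ref{p:closed-points-alg-closed-var}). The hard part is pinning the fiber dimension bound to the specific point $0$: the existence statement Proposition~\ref{p:dim-fiber}.(i) only produces a good fiber at \emph{some} closed point, so I must guarantee irreducibility of the source $\widehat{Y'} \times_K \widehat{Z'}$ to invoke part~(ii) at the vertex, and this is exactly where reducing to irreducible components and the algebraically closed hypothesis (through Proposition~\ref{p:prod-alg-var-alg-closed}.(iv)) are essential.
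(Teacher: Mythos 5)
Your proof is correct, but it takes a genuinely different route from the paper's. The paper reduces to irreducible components of maximal dimension exactly as you do, and then simply invokes Theorem~I.7.1 of Hartshorne as a black box, adding only the same Jacobson/closed-point argument you give for the equivalence of the two formulations. You instead reprove that classical statement from the paper's own toolkit: passing to the affine cones, forming the difference morphism $\delta(u,v)=u-v$ (the classical ``reduction to the diagonal''), and applying the fibre-dimension estimate of Proposition~\ref{p:dim-fiber}.(ii) at the common vertex. The argument is sound at every step: the cones over the irreducible components are irreducible of dimensions $r+1$ and $s+1$, their $K$-fibered product is irreducible by Proposition~\ref{p:prod-alg-var-alg-closed}.(iv) and has dimension $r+s+2$ by Proposition~\ref{p:prod-alg-var}.(iii), the fibre $\delta^{-1}(0)$ is homeomorphic to the scheme-theoretic intersection of the cones, and the resulting bound $\dim(\widehat{Y'}\cap\widehat{Z'})\geq r+s-N+1\geq 1$ forces a closed point away from the vertex, hence a $K$-point of $Y'\cap Z'$. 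You are also right that the crux is pinning the fibre estimate to the specific point $(0,0)$: Proposition~\ref{p:dim-fiber}.(i) alone would not suffice, and irreducibility of the source is exactly what licenses part~(ii). What your approach buys is a self-contained proof resting only on results already established in the paper plus elementary properties of cones (the fact $\dim\widehat{W}=\dim W+1$ is Exercise~I.2.10 in Hartshorne rather than Proposition~I.2.6 --- a harmless citation slip); what the paper's approach buys is brevity, at the cost of importing the projective dimension theorem wholesale from the literature.
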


\begin{proof}
Up to replacing $Y$ and $Z$ by irreducible components of maximal dimension equipped  
with their induced reduced closed subscheme structure, 
we can assume that $Y$ and $Z$ are irreducible.  
The theorem is then just a reformulation of Theorem~I.7.1 in~\cite{hartshorne}.  
Indeed, $Y\cap Z$ is closed so it is Jacobson. 
We then equip it with the induced reduced subscheme structure. 
Hence, $Y \cap Z$ is nonempty if and only if it has a closed point, i.e., $(Y\cap Z)(K)=Y(K) \cap Z(K)$ is nonempty. 
See also Proposition~5.40 and its corollaries in~\cite{gorz}. 
\end{proof}

\begin{corollary}
\label{c:proj-dim}
Let $N \in \N $ and let $X \subset \Proj^N_K$ be a projective variety over an algebraically closed field $K$. 
Let $L \subset X$ be a hyperplane section of $X$, i.e., 
$L= H\cap X$, where $H\subset \Proj^N_K$ is a hyperplane not containing $X$. 
Let $C\subset X$ be a closed subscheme such that $\dim(C) \geq 1$. 
Then  $L \cap C$ is nonempty. 
\end{corollary}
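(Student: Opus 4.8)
The plan is to deduce this directly from the Projective Dimension Theorem (Theorem~\ref{t:proj-dim}). The key preliminary observation is that, since $C \subset X$, intersecting $C$ with the hyperplane section $L = H \cap X$ is the same as intersecting it with the hyperplane $H$ itself. Indeed, $L \cap C = (H \cap X) \cap C = H \cap (X \cap C) = H \cap C$, where the last equality uses $X \cap C = C$. Thus the problem is reduced to showing that $H \cap C$ is nonempty, and the hypothesis that $H$ does not contain $X$ turns out to be irrelevant to this conclusion: it is part of the definition of a hyperplane section (guaranteeing that $L$ is a proper closed subset of $X$) but plays no role in the argument.

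Next I would identify the two closed subschemes of $\Proj^N_K$ to which Theorem~\ref{t:proj-dim} will be applied. Since $X$ is a closed subscheme of $\Proj^N_K$ and $C$ is closed in $X$, the composite closed immersion $C \hookrightarrow X \hookrightarrow \Proj^N_K$ exhibits $C$ as a closed subscheme of $\Proj^N_K$ of dimension $s \coloneqq \dim(C) \geq 1$. On the other hand, $H$ is a hyperplane, hence a closed subscheme of $\Proj^N_K$ of dimension $r \coloneqq N-1$. These will play the roles of $Z$ and $Y$, respectively, in the theorem.

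The verification of the hypothesis of Theorem~\ref{t:proj-dim} is then immediate: $r + s = (N-1) + \dim(C) \geq (N-1) + 1 = N$, so that $r + s \geq N$. Applying the theorem with $Y = H$ and $Z = C$ yields that $H \cap C$ is nonempty, and combining this with the first step gives $L \cap C = H \cap C \neq \emptyset$, as desired.

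There is essentially no genuine obstacle here beyond careful bookkeeping, since the statement is a direct corollary. The only two points requiring a moment's attention are the reduction $L \cap C = H \cap C$ (which relies on $C \subset X$) and the recognition that $C$, being closed in the closed subscheme $X \subset \Proj^N_K$, is itself a closed subscheme of $\Proj^N_K$ of the dimension needed to meet the numerical hypothesis $r + s \geq N$ of the Projective Dimension Theorem.
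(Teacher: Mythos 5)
Your proposal is correct and follows exactly the same route as the paper's proof: realize $C$ as a closed subscheme of $\Proj^N_K$, check $\dim(C)+\dim(H)\geq 1+(N-1)=N$, apply Theorem~\ref{t:proj-dim} to get $H\cap C\neq\varnothing$, and conclude via $L\cap C=H\cap X\cap C=H\cap C$. Nothing to add.
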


\begin{proof}
By hypothesis, $X \subset \Proj^N_K$ is a closed subscheme. 
Hence, $C$ is also a closed subscheme of $\Proj^N_K$ since $C$ is closed in $X$.  
Observe that $\dim(C)+\dim(H)\geq 1+ N-1 = n$. 
Therefore, we deduce from Theorem~\ref{t:proj-dim} that $H\cap C \not= \varnothing$.  
As $C\subset X$, we conclude that  $L\cap C = H\cap X \cap C = H \cap C$ is 
non-empty.  
\end{proof}

\begin{remark}
\label{rem:section-exist}
With the notation as in Corollary~\ref{c:proj-dim}, we claim that hyperplane sections of $X$ always exist. 
Indeed, let $H_0,\dots, H_N$ denote the $N+1$ standard coordinate hyperplanes of $\Proj^N$.  
Since $H_0\cap \dots \cap H_N=\varnothing$ and 
 $X\supset C$ is nonempty, there exists a hyperplane $H_i$ not containing $X$. This proves the claim. 
\par
In fact, let $\iota \colon X\to \Proj^N$ be the closed immersion. 
Let $\OO(1)$ denote  the Serre line bundle of $\Proj^N$. 
Then each global section of the very ample line bundle $\iota^*\OO(1)$ of $X$ is a hyperplane section. 
These global sections, denoted by $H^0(X,\iota^*\OO(1))$, form a strictly positive finite dimensional $K$-vector space. 
See chapters~II.5, III, and Appendix~A in \cite{hartshorne} for more details.   
\end{remark}

Every projective $K$-algebraic variety is $K$-proper
(see for example \cite[Theorem~3.3.30]{liu-alg-geom}). 
The converse is not true. 
However, we have the following consequence of Chow's lemma, 
which we shall use in Section~\ref{sec:weeak-pre-inj}. 

\begin{theorem}
\label{t:chow-lemma}
(Chow's lemma) 
Let $X$ be an irreducible complete algebraic variety over a field $K$. 
Then there exist an irreducible projective $K$-algebraic variety $\tilde{X}$ 
and a surjective $K$-morphism $f \colon \tilde{X} \to X$ with $\dim(\tilde{X})=\dim(X)$. 
\end{theorem}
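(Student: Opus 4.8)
The plan is to deduce this refined statement from the classical form of Chow's lemma together with the dimension facts for algebraic varieties established above. Recall that the classical Chow's lemma asserts that for a scheme $X$ proper over a Noetherian base $S$ there exist a projective $S$-scheme $X'$ and a projective, surjective $S$-morphism $\pi \colon X' \to X$ which restricts to an isomorphism over a dense open subset of $X$. Since $X$ is complete, i.e.\ proper over $K$, and of finite type, this applies with $S = \Spec(K)$: I would obtain a projective $K$-variety $X'$, a dense open subset $U \subset X$, and a proper surjective morphism $\pi \colon X' \to X$ such that $\pi$ induces an isomorphism $\pi^{-1}(U) \xrightarrow{\sim} U$. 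The only features of the desired $\tilde X$ not yet present are irreducibility and the equality of dimensions, and these I would extract by passing to the irreducible component of $X'$ that dominates $X$.

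Concretely, I would first observe that $U$ is irreducible, being a nonempty open subset of the irreducible variety $X$, so that $\pi^{-1}(U) \cong U$ is irreducible as well. Its closure $\tilde X \coloneqq \overline{\pi^{-1}(U)} \subset X'$, endowed with its reduced induced closed subscheme structure, is therefore an irreducible closed subscheme of the projective variety $X'$, and hence is itself an irreducible projective $K$-variety. Setting $f \coloneqq \pi|_{\tilde X} \colon \tilde X \to X$, I would note that $f$ factors as the closed immersion $\tilde X \hookrightarrow X'$ followed by $\pi$, so $f$ is proper; consequently $f(\tilde X)$ is closed in $X$. On the other hand $\tilde X \supset \pi^{-1}(U)$, so $f(\tilde X) \supset U$ is dense in $X$. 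A closed dense subset of $X$ being all of $X$, I conclude that $f$ is surjective.

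It then remains to check that $\dim(\tilde X) = \dim(X)$. Here I would use that $\pi^{-1}(U) \subset \tilde X$, so that $f^{-1}(U) = \tilde X \cap \pi^{-1}(U) = \pi^{-1}(U)$ and $f$ restricts to an isomorphism $f^{-1}(U) \xrightarrow{\sim} U$. Now $f^{-1}(U)$ is a nonempty open, hence constructible, subset of $\tilde X$ whose closure is $\tilde X$, while $U$ is an open subset of $X$ whose closure is $X$; applying Proposition~\ref{p:dim-alg-var}.(vi) to both gives $\dim(\tilde X) = \dim(f^{-1}(U))$ and $\dim(U) = \dim(X)$, and the isomorphism $f^{-1}(U) \cong U$ yields $\dim(f^{-1}(U)) = \dim(U)$. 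Combining these equalities gives $\dim(\tilde X) = \dim(X)$, as required.

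The genuinely hard input is the classical Chow's lemma itself, whose proof—embedding a finite quasi-projective open cover of $X$, forming the closure of the graph of the resulting rational map into a product of projective spaces, and verifying properness and projectivity of the two projections—I would treat as a black box. All the remaining steps, namely irreducibility, surjectivity via properness plus density, and the dimension count via the isomorphism over a dense open, are routine once the ``isomorphism over a dense open subset'' clause of the classical statement is in hand.
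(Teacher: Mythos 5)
Your argument is correct. The paper's own proof is a bare citation to \cite[Corollaire~II.5.6.2]{grothendieck-ega-2}, which is exactly the irreducible refinement of Chow's lemma that you derive here; your deduction of irreducibility (via the closure of $\pi^{-1}(U)$), surjectivity (properness plus density of $U$), and the dimension equality (via Proposition~\ref{p:dim-alg-var}.(vi) applied to the dense opens $f^{-1}(U)\cong U$) from the basic form of Chow's lemma is sound and is essentially the standard proof of that corollary.
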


\begin{proof}
See \cite[Corollaire~II.5.6.2]{grothendieck-ega-2}. 
\end{proof}

\subsection{Amenable groups}
A group $G$ is called \emph{amenable} if there exist a directed set $I$ and a family $(F_i)_{i \in I}$ of non-empty finite subsets of $G$  such that
\begin{equation}
\label{e:folner-s}
\lim_{i} \frac{\vert F_i \setminus  F_i g\vert}{\vert F_i \vert} = 0 \text{  for all } g \in G
\end{equation}
(see \cite[Chapter~4]{ca-and-groups-springer} and the references therein).
Such a family $(F_i)_{i \in I}$ is then called a \emph{(right) F\o lner net} for $G$. 
\par
All finitely generated groups of subexponential growth and all solvable groups are amenable. 
Moreover, the class of amenable groups is closed under the operations of taking subgroups, quotients,  extensions, and direct limits.
On the other hand, every group containing a non-abelian free subgroup is non-amenable.  

\subsection{Tilings}
Let $G$ be a group. 
Let  $E$ and $E'$ be two finite subsets of $G$.
A subset $T \subset G$ is called  an $(E,E')$-\emph{tiling} if
it satisfies the following two conditions:
\begin{enumerate}[\rm (T-1)]
\item 
 the subsets $gE$, $g \in T$, are pairwise disjoint,
 \item
$G = \bigcup_{g \in T} gE'$.
\end{enumerate}
 \par
The following statement is an immediate consequence of Zorn's lemma 
(see \cite[Proposition~5.6.3]{ca-and-groups-springer}).

\begin{proposition} 
\label{p:tilings-exist}
Let $G$ be a group. Let $E$ be a non-empty finite subset of $G$ and let
$E' \coloneqq  EE^{-1} = \{g h^{-1} : g,h \in E\}$. 
Then there exists  an $(E,E')$-tiling $T \subset G$.
\end{proposition}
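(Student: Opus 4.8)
The plan is to apply Zorn's lemma to the collection of all \emph{packings} and then to show that any \emph{maximal} packing automatically satisfies the covering condition (T-2). First I would introduce the set $\mathcal{P}$ of all subsets $S \subset G$ satisfying (T-1), i.e.\ those for which the translates $gE$, $g \in S$, are pairwise disjoint, partially ordered by inclusion. It is nonempty since $\varnothing \in \mathcal{P}$. Given a chain $(S_\lambda)$ in $\mathcal{P}$, its union $S = \bigcup_\lambda S_\lambda$ again lies in $\mathcal{P}$: any two distinct elements $g, g' \in S$ both belong to a single member $S_\lambda$ of the chain (by total ordering), where $gE \cap g'E = \varnothing$ by hypothesis. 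Thus every chain has an upper bound in $\mathcal{P}$, and Zorn's lemma produces a maximal element $T \in \mathcal{P}$.

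The heart of the argument is to verify that this maximal $T$ satisfies (T-2). I would argue by contradiction: suppose there is some $h \in G$ with $h \notin \bigcup_{g \in T} gE'$. The key computation is that adjoining $h$ to $T$ preserves the packing condition. Indeed, if $hE \cap gE \neq \varnothing$ for some $g \in T$, then $h e_1 = g e_2$ for suitable $e_1, e_2 \in E$, so that $h = g(e_2 e_1^{-1}) \in gE'$, since $e_2 e_1^{-1} \in EE^{-1} = E'$; this contradicts the choice of $h$. Hence $hE$ is disjoint from every $gE$ with $g \in T$, and therefore $T \cup \{h\} \in \mathcal{P}$.

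To close the contradiction I must know that $T \cup \{h\}$ is \emph{strictly} larger than $T$, and here the nonemptiness of $E$ enters: choosing any $e \in E$ gives $1 = e e^{-1} \in E'$, whence $h = h \cdot 1 \in hE'$. Consequently $h \notin T$, for otherwise $h \in hE' \subset \bigcup_{g \in T} gE'$, again contradicting the choice of $h$. Thus $T \subsetneq T \cup \{h\}$ in $\mathcal{P}$, which violates the maximality of $T$. This contradiction shows that no such $h$ exists, i.e.\ $G = \bigcup_{g \in T} gE'$, so $T$ is the desired $(E,E')$-tiling.

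I do not expect a genuine obstacle here: this is the standard \emph{maximal packings cover} argument, and its content lies entirely in the two elementary manipulations above. The specific shape $E' = EE^{-1}$ is used precisely to convert a nonempty overlap $hE \cap gE$ into the membership $h \in gE'$, while the nonemptiness of $E$ is used only to guarantee $1 \in E'$, which forces the strict enlargement $h \notin T$. The only point demanding a little care is keeping these two roles of $E'$ distinct and confirming that the upper-bound verification for chains uses nothing beyond the pairwise nature of condition (T-1).
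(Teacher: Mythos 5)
Your proof is correct and is exactly the standard maximal-packing argument via Zorn's lemma that the paper invokes (it defers to \cite[Proposition~5.6.3]{ca-and-groups-springer}, whose proof is this same argument). All the delicate points — the union of a chain remaining a packing, the conversion of a nonempty overlap $hE \cap gE$ into $h \in gE'$, and the use of $1_G \in E'$ to get strictness — are handled correctly.
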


We shall need  the following estimate on the growth of tilings 
with respect to F\o lner nets. 

\begin{proposition} 
\label{p:folner-tiling}
Let $G$ be an amenable group and let $(F_i)_{i \in I}$ be a  F\o lner net for $G$.
Let $E$ and $E'$ be finite subsets of $G$ and suppose that
$T \subset G$ is an $(E,E')$-tiling. 
For each $i \in I$, define the subset 
$T_i \subset T$ by $T_i \coloneqq  \{g\in T: gE \subset F_i\}$.
Then there exist a real number $\alpha> 0$ and an element $i_0 \in I$ such that 
$| T_i | \geq \alpha | F_i |$
for all $i \geq i_0$. 
\end{proposition}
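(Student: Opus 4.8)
The plan is to bound $|T_i|$ from below by first counting all tiles that \emph{meet} $F_i$ and then discarding, via the F\o lner condition, the negligible proportion of them that are not entirely contained in $F_i$. Write $m \coloneqq |E'|$, which is positive since (T-2) forces $E'$ to be non-empty. I would introduce the auxiliary set $S_i \coloneqq \{g \in T : gE' \cap F_i \neq \varnothing\}$. Condition (T-2) gives $G = \bigcup_{g \in T} gE'$, hence $F_i \subset \bigcup_{g \in S_i} gE'$, and counting yields $|F_i| \leq \sum_{g \in S_i} |gE' \cap F_i| \leq m\, |S_i|$, so that $|S_i| \geq |F_i|/m$. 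This is the easy lower bound for the number of tiles interacting with $F_i$, and it is the only place the covering property of the tiling enters (the disjointness condition (T-1) is not needed for the lower bound).

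The heart of the argument, and the step I expect to be the main obstacle, is to show that $|S_i \setminus T_i| = o(|F_i|)$. Setting $F_i^- \coloneqq \{g \in G : gE \subset F_i\} = \bigcap_{e \in E} F_i e^{-1}$, one has $T_i = T \cap F_i^-$, and since every $g \in S_i$ satisfies $g \in F_i (E')^{-1}$, I would verify the inclusion
\[
S_i \setminus T_i \subset F_i(E')^{-1} \setminus F_i^-
\subset \bigl(F_i(E')^{-1} \setminus F_i\bigr) \cup \bigl(F_i \setminus F_i^-\bigr).
\]
Both sets on the right are obtained from $F_i$ by finitely many right translations, which is exactly what the F\o lner condition controls: using the translation-invariance identity $|F_i(e')^{-1} \setminus F_i| = |F_i \setminus F_i(e')^{-1}|$ together with $F_i \setminus F_i^- = \bigcup_{e \in E} (F_i \setminus F_i e^{-1})$, I obtain
\[
|S_i \setminus T_i| \leq \sum_{e' \in E'} |F_i \setminus F_i(e')^{-1}| + \sum_{e \in E} |F_i \setminus F_i e^{-1}|,
\]
a sum of $|E| + |E'|$ terms each of which is F\o lner-negligible relative to $|F_i|$. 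The delicate points here are recognising $S_i \setminus T_i$ as sitting inside a Boolean combination of translates of $F_i$, and passing from the sequential intuition to the net formulation.

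Finally I would combine the two estimates. From $|T_i| \geq |S_i \cap T_i| = |S_i| - |S_i \setminus T_i| \geq |F_i|/m - |S_i \setminus T_i|$, and using the directedness of the index set $I$ to choose a single $i_0$ making each of the finitely many F\o lner ratios above smaller than $1/\bigl(2m(|E|+|E'|)\bigr)$ for all $i \geq i_0$, I get $|S_i \setminus T_i| \leq |F_i|/(2m)$ and hence $|T_i| \geq |F_i|/(2m)$ for all $i \geq i_0$. Taking $\alpha \coloneqq 1/(2m) = 1/(2|E'|)$ then completes the proof.
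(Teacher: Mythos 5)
Your argument is correct. Note first that the paper does not prove this proposition at all: it simply cites \cite[Proposition~5.6.4]{ca-and-groups-springer}, so you have supplied a self-contained proof where the authors supply none. Your route also differs slightly from the one in the cited reference. There, one shrinks first and covers second: one shows that the $(E')^{-1}E$-interior of $F_i$ is covered by the tiles $gE'$ with $g\in T_i$ (if $h$ lies in that interior and $h\in gE'$ with $g\in T$, then $g=h(e')^{-1}$ for some $e'\in E'$, so $gE\subset h(E')^{-1}E\subset F_i$), and then counts, using \eqref{e:boundary-folner} to see that the interior eventually has at least half the cardinality of $F_i$. You instead cover first and shrink second: you count all tiles meeting $F_i$ to get $|S_i|\geq |F_i|/|E'|$, and then control $|S_i\setminus T_i|$ by exhibiting $S_i\setminus T_i$ inside $\bigl(F_i(E')^{-1}\setminus F_i\bigr)\cup\bigl(F_i\setminus F_i^-\bigr)$, a union of $|E|+|E'|$ F\o lner-negligible sets. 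Both arguments use only the covering condition (T-2), never the disjointness (T-1), and both land on the same constant $\alpha=1/(2|E'|)$; yours trades the reference's single appeal to the interior estimate for an explicit Boolean decomposition into right translates, which is marginally longer but makes the dependence on the F\o lner condition completely transparent. All the individual steps check out, including the cardinality identity $|F_i(e')^{-1}\setminus F_i|=|F_i\setminus F_i(e')^{-1}|$ (valid because right translation preserves cardinality) and the use of directedness of $I$ to choose a single $i_0$ dominating the finitely many indices required.
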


\begin{proof}
See  \cite[Proposition~5.6.4]{ca-and-groups-springer}. 
\end{proof}
\section{Algebraic cellular automata}
\label{sec:alg-ca}

\subsection{Interiors, neighborhoods, and boundaries}
\label{ss:interiors}
Let $G$ be a group and let $M$ be a finite subset of $G$.
The $M$-\emph{interior} $\Omega^-$ and the $M$-\emph{neighborhood} $\Omega^+$ of a 
subset $\Omega \subset G$ are the subsets of $G$ defined respectively by
\[
\Omega^- \coloneqq  \{g \in G : g M \subset \Omega \},
\]
and
\[
\Omega^+ \coloneqq \Omega M = \{g h  : g \in \Omega \text{ and } h \in M \}. 
\]
Note that $\Omega^- \subset \Omega \subset \Omega^+$ if $1_G \in M$.
\par
We   define the $M$-\emph{boundary} $\partial \Omega$ of $\Omega$ by 
$\partial M = \Omega^+ \setminus \Omega^-$.
\par
If $G$ is an amenable group and $(F_i)_{i \in I}$ is a F\o lner net of $G$, then
one has
\begin{equation}
\label{e:boundary-folner}
\lim_i \frac{|\partial F_i |}{| F_i |} = 0 \quad \text{for every finite subset } M \subset G.
\end{equation}
(see e.g.~\cite[Proposition~5.4.4]{ca-and-groups-springer}).
\par
Let $A$ be a set and let $G$ be a group.
Suppose now that we are given a celular automaton $\tau \colon A^G \to A^G$ with memory set $M$.
Let $\Omega \subset G$ and let $\Omega^-$ and $\Omega^+$ be  defined
as above.\par
The cellular automaton $\tau$ induces maps
$\tau^-_\Omega \colon A^\Omega \to A^{\Omega^{-}}$ and $\tau^+_\Omega \colon A^{\Omega^{+}} \to A^\Omega$ defined respectively by 
\[
\tau^-_\Omega(u) \coloneqq  (\tau(c))\vert_{\Omega^{-}} \quad \text{ for all } u \in A^\Omega,
\]
and 
\[
\tau^+_\Omega(u) \coloneqq  (\tau(c))\vert_\Omega \quad \text{ for all } u \in A^{\Omega^{+}},
\]
where $c \in A^G$ is any configuration extending $u$.
Observe that the maps $\tau^-_\Omega$ and $\tau^+_\Omega$ are well defined.
Indeed, Formula~\eqref{e:local-property} implies that $\tau(c)(g)$ only depends of the restriction of $c$ to $g M$.
\par 

\subsection{Cellular automata over algebraic varieties} (cf.~\cite{cscp-alg-ca})
Let $S$ be a scheme and let $X,Y$ be $S$-schemes.
We  denote by $X(Y)$ the set of $Y$-points of $X$, i.e., the set consisting of all $S$-scheme morphisms $Y \to X$. 
If $E$ is a finite set, $X^E$ will denote the $S$-fibered product of a family of copies of $X$ indexed by $E$. 
Note that $(X^E)(Y) = (X(Y))^E$ by the universal property of $S$-fibered products.
If  $f \colon Z \to X$ is an $S$-scheme morphism,
then $f$ induces a map
$F^{(Y)} \colon Z(Y) \to X(Y)$ given by
$f^{(Y)}(\varphi) = f \circ \varphi$ for all $\varphi \in Z(Y)$.
\par
Let $A \coloneqq X(Y)$ and
let $G$ be a group.
Let $\tau \colon A^G \to A^G$ be a cellular automaton over the alphabet $A$ and the group $G$.
We say that $\tau$ is an \emph{algebraic cellular automaton over the group $G$ and schemes} $S,X,Y$ if 
$\tau$ admits a memory set $M$ such that the associated local defining map $\mu_M \colon A^M \to A$ satisfies  the following condition:

\begin{enumerate}
\item[($*$)]
there exists an $S$-scheme morphism $f \colon X^M \to X$ such that $\mu_M = f^{(Y)}$.
\end{enumerate}
 
 \begin{remark}
 \label{rem:independent-memory}
 If $X(S) \not= \varnothing$, and condition ($*$) is satisfied for some memory set $M$ of $\tau$, then ($*$) is satisfied for any memory set of $\tau$ (see \cite[Proposition~5.1]{cscp-alg-ca}).
This applies in particular when $S = \Spec(K)$ for some algebraically closed field $K$
 since in that case $X(S)$ is (identified with)  the set of closed points of $X$ and $X$ is Jacobson.  
 \end{remark}

\begin{lemma}
\label{l:local-map}
Let $S$ be a scheme and let $X,Y$ be $S$-schemes.
Let $A \coloneqq X(Y)$ and let $G$ be a group.
Suppose that  $\tau \colon A^G \to A^G$ is an algebraic cellular automaton over the schemes $S,X,Y$ and let $M$ be a memory set of $\tau$ satisfying $(*)$.
Let $\Omega$ be a finite subset of $G$.
Then  there exist $S$-scheme morphisms  
$f^-_\Omega \colon X^\Omega \to X^{\Omega^{-}}$ and 
$f^+_\Omega \colon X^{\Omega^{+}} \to X^\Omega$ such that $f_\Omega^{-(Y)}=\tau^-_\Omega$ and $f_\Omega^{+(Y)}=\tau^+_\Omega$. 
\end{lemma}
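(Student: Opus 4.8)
The plan is to construct the two morphisms componentwise, using the universal property of the $S$-fibered product: for a finite index set $E$ and any $S$-scheme $Z$, an $S$-morphism $Z \to X^E$ is the same datum as a family $(p_e)_{e\in E}$ of $S$-morphisms $Z \to X$. Hence it suffices to specify, for each index of the target fibered product, a single $S$-morphism into $X$; the defining morphism $f \colon X^M \to X$ furnished by condition~$(*)$ will be the basic building block.

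I would first treat $f^-_\Omega \colon X^\Omega \to X^{\Omega^-}$. Fix $g \in \Omega^-$. By the very definition of the $M$-interior one has $gM \subset \Omega$, so there is a canonical projection $\mathrm{pr}_g \colon X^\Omega \to X^{gM}$ onto the factors indexed by $gM$. Moreover, left translation by $g$ is a bijection $M \to gM$, $m \mapsto gm$, which induces an isomorphism of $S$-fibered products $\theta_g \colon X^{gM} \xrightarrow{\sim} X^M$ (its $m$-component being the projection onto the factor indexed by $gm$). Setting $f_g \coloneqq f \circ \theta_g \circ \mathrm{pr}_g \colon X^\Omega \to X$, I define $f^-_\Omega$ to be the unique $S$-morphism $X^\Omega \to X^{\Omega^-}$ whose $g$-component equals $f_g$ for every $g \in \Omega^-$. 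The construction of $f^+_\Omega \colon X^{\Omega^+} \to X^\Omega$ is strictly parallel: for $g \in \Omega$ one has $gM \subset \Omega M = \Omega^+$, so the same recipe (project onto the $gM$-factors, apply the translation isomorphism, then $f$) yields an $S$-morphism $X^{\Omega^+} \to X$, and these assemble into $f^+_\Omega$.

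It then remains to verify the identities on $Y$-points, using the canonical identifications $X^\Omega(Y) = A^\Omega$ and $X^{\Omega^\pm}(Y) = A^{\Omega^\pm}$ coming from $(X^E)(Y) = (X(Y))^E$. Take $u \in A^\Omega$, viewed as $\varphi_u \colon Y \to X^\Omega$ with $\mathrm{pr}_\omega \circ \varphi_u = u(\omega)$ for $\omega \in \Omega$. For $g \in \Omega^-$, the composite $\theta_g \circ \mathrm{pr}_g \circ \varphi_u \colon Y \to X^M$ reads off, in its $m$-component, the value $u(gm)$; hence it corresponds to the tuple $(u(gm))_{m\in M} \in A^M$, and therefore the $g$-component of $f_\Omega^{-(Y)}(u) = f^-_\Omega \circ \varphi_u$ equals $f^{(Y)}\big((u(gm))_{m\in M}\big) = \mu_M\big((u(gm))_{m\in M}\big)$. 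On the other hand, if $c \in A^G$ extends $u$, then $(\tau^-_\Omega(u))(g) = \tau(c)(g) = \mu_M\big((c(gm))_{m\in M}\big)$, and this equals $\mu_M\big((u(gm))_{m\in M}\big)$ because $gM \subset \Omega$ forces $c(gm) = u(gm)$. The two sides coincide for every $g \in \Omega^-$, so $f_\Omega^{-(Y)} = \tau^-_\Omega$; the identity $f_\Omega^{+(Y)} = \tau^+_\Omega$ follows verbatim, now using $gM \subset \Omega^+$ for $g \in \Omega$.

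The substance of the argument is purely formal, so there is no serious analytic obstacle; the one place demanding care is the bookkeeping with the translation isomorphisms $\theta_g$, i.e. checking that passing through $X^{gM}$ and relabelling by $m \mapsto gm$ extracts exactly the tuple $(u(gm))_{m\in M}$ fed into $\mu_M$. Once the identification $(X^E)(Y) = (X(Y))^E$ is applied consistently and the relation $\mu_M = f^{(Y)}$ is invoked, the matching of the two maps on $Y$-points is forced by the universal property of the fibered product.
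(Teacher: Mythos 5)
Your construction is exactly the one in the paper: for each $g$ in the target index set you compose the projection $X^\Omega \to X^{gM}$ with the translation isomorphism $X^{gM} \xrightarrow{\sim} X^M$ and with $f$, then assemble these components via the universal property of the $S$-fibered product; the paper's proof does precisely this (with the isomorphism written as induced by $h \mapsto g^{-1}h$, which is the same map as yours) and simply declares the identity on $Y$-points ``clear,'' whereas you verify it explicitly. The proposal is correct and matches the paper's argument.
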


\begin{proof}
We prove the assertion for $f^-_\Omega$. The construction of $f^+_\Omega$ is similar.
For every $g \in \Omega^{-}$, we consider the $S$-scheme projection morphism 
$p_g \colon X^\Omega \to X^{g M}$  
and the $S$-scheme isomorphism $i_g \colon X^{gM} \to X^M$  induced by the bijective map $g M \to  M$ given by left multiplication by $g^{-1}$. 
Then the family of $S$-scheme morphisms $f \circ i_g \circ p_g \colon X^{\Omega} \to X$ for $g \in \Omega^{-}$, 
 yields, by the universal property of $S$-fibered products,
 a $S$-scheme morphism $f^-_\Omega \colon X^\Omega \to X^{\Omega^{-}}$. 
It is clear from this construction that $f_\Omega^{-(Y)}=\tau^-_\Omega$. 
\end{proof}

Let $G$ be a group and let $K$ be a field. Let $X$ be a $K$-algebraic variety and let 
$A \coloneqq X(K)$.
We say that a cellular automaton $\tau \colon A^G \to A^G$ is an \emph{algebraic cellular automaton over} $(G,X,K)$
if $\tau$ is an algebraic cellular automaton over the group $G$ and the schemes $(K,X,K)$, i.e.,
for some, or equivalently any (by Remark~\ref{rem:independent-memory}), memory set $M$ of $\tau$,
there exists a  $K$-scheme morphism  $f \colon X^M \to X$ such that $f^{(K)} \colon A^M \to A$ is the local defining map of $\tau$ associated with $M$.
\par
Suppose now that the field $K$ is algebraically closed.
Recall from Remark~\ref{rem:closed-points-rat-points}, that $A$ is regarded  as the set of closed points of $X$.
Given a  finite subset $\Omega$ of $G$, we denote by $X^\Omega$ the $K$-fibered product of a family of copies of $X$ indexed by $\Omega$.
It follows from Assertion~(ii) in Proposition~\ref{p:prod-alg-var} that
\[
A^\Omega = (X(K))^\Omega = X^\Omega(K).
\]
Thus, $A^\Omega$ is the set of closed points of the algebraic variety $X^\Omega$.
Note that Proposition~\ref{p:dim-alg-var} and Assertion (iii) in 
Proposition~\ref{p:prod-alg-var} imply that
\begin{equation}
\label{e:kdim-a-to-omeg}
\dim(A^\Omega) = \dim(X^\Omega) = |\Omega| \dim(X) < \infty.
\end{equation}
 In what follows, every subset  of $A^\Omega$, or more generally of $X^\Omega$, 
is  equiped with the topology induced by the Zariski topology on $X^\Omega$. 

\begin{remark}
\label{r:reduce}
Let $\iota \colon X_{red} \to X$ denote the reduced scheme associated to the $K$-algebraic variety $X$. 
Then $X_{red}$ is also a $K$-algebraic variety and the immersion $\iota$ induces 
the  identification $X_{red}(K)=X(K)$. 
Moreover, $(X_{red})^\Omega=(X^\Omega)_{red}$ for every finite subset $\Omega \subset G$, 
so that every algebraic cellular automaton over $(G,X,K)$ can be considered as an algebraic cellular automaton over $(G,X_{red},K)$ (cf. \cite[Remark~9.2]{cscp-alg-ca}). 
Hence, there is no loss of generality to assume that $X$ is reduced. 
\end{remark}

\begin{proposition}
\label{p:set-config-zariski}
Let $G$ be a group   
and let $X$ be an algebraic variety over an algebraically closed field $K$. 
Let $A \coloneqq X(K)$ and 
let  $\tau \colon A^G \to A^G$ be an algebraic  cellular automaton over $(G,X,K)$.
Let   $\Gamma \subset A^G$ and let $\Phi \coloneqq \tau(\Gamma)$ denote the image of $\Gamma$ under $\tau$. 
Then the following hold:
\begin{enumerate}[\rm (i)]
\item
if $\Gamma_\Omega$ is a constructible subset of $A^\Omega$ for every finite subset 
$\Omega \subset G$,
then $\Phi_\Omega$ is a constructible subset of $A^\Omega$ for every finite subset $\Omega \subset G$;
\item
if the variety $X$ is complete 
and   $\Gamma_\Omega$ is closed in  $A^\Omega$ for every finite subset 
$\Omega \subset G$,
then $\Phi_\Omega$ is closed in $A^\Omega$ for every finite subset $\Omega \subset G$.
\end{enumerate}
\end{proposition}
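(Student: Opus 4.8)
The plan is to realize each projection $\Phi_\Omega$ as the set-theoretic image, on closed points, of a single morphism of algebraic varieties, and then to invoke Chevalley's theorem for~(i) and properness for~(ii). First I would record the elementary identity
\[
\Phi_\Omega = \tau^+_\Omega\bigl(\Gamma_{\Omega^+}\bigr),
\]
where $\Omega^+ = \Omega M$ is the $M$-neighborhood of $\Omega$ for a memory set $M$ of $\tau$ satisfying $(*)$: for every $c \in \Gamma$ one has $\tau(c)\vert_\Omega = \tau^+_\Omega(c\vert_{\Omega^+})$ directly from the definition of $\tau^+_\Omega$, and taking the image over all $c \in \Gamma$ gives the displayed equality. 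Since $\Omega^+$ is finite, the hypothesis on $\Gamma$ may be applied to it, so $\Gamma_{\Omega^+}$ is constructible (resp.\ closed) in $A^{\Omega^+}$. By Lemma~\ref{l:local-map} there is a $K$-scheme morphism $f^+_\Omega \colon X^{\Omega^+} \to X^\Omega$ with $f_\Omega^{+(K)} = \tau^+_\Omega$; under the identifications $A^{\Omega^+} = (X^{\Omega^+})_0$ and $A^\Omega = (X^\Omega)_0$ the map $\tau^+_\Omega$ is precisely the restriction of the scheme morphism $f^+_\Omega$ to closed points. Hence $\Phi_\Omega = f^+_\Omega(\Gamma_{\Omega^+})$ as a subset of $(X^\Omega)_0$.

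For~(i) I would then apply Proposition~\ref{p:dim-image}.(vi) to $f^+_\Omega$ and the constructible subset $E \coloneqq \Gamma_{\Omega^+}$ of $(X^{\Omega^+})_0$: it gives at once that $f^+_\Omega(\Gamma_{\Omega^+}) = \Phi_\Omega$ is a constructible subset of $(X^\Omega)_0 = A^\Omega$. This is Chevalley's theorem transported to closed points, and it uses no completeness hypothesis.

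For~(ii) the additional input is that $X$ is complete, i.e.\ proper over $K$. Then $X^{\Omega^+}$, a finite $K$-fibered product of copies of $X$, is again proper over $K$ by the standard stability properties of proper morphisms, while $X^\Omega$ is separated over $K$; therefore $f^+_\Omega$ is a proper, in particular closed, morphism. I would now pass to the scheme level. Let $Z \coloneqq \overline{\Gamma_{\Omega^+}}$ be the closure of $\Gamma_{\Omega^+}$ in $X^{\Omega^+}$; since $\Gamma_{\Omega^+}$ is closed in $(X^{\Omega^+})_0$, one checks that $Z_0 = Z \cap (X^{\Omega^+})_0 = \Gamma_{\Omega^+}$ by Proposition~\ref{p:dim-alg-var}.(v). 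Because $f^+_\Omega$ is closed, $f^+_\Omega(Z)$ is closed in $X^\Omega$, and Proposition~\ref{p:dim-image}.(ii) applied to the constructible (indeed closed) set $Z$ yields
\[
\bigl(f^+_\Omega(Z)\bigr)_0 = f^+_\Omega(Z_0) = f^+_\Omega(\Gamma_{\Omega^+}) = \Phi_\Omega .
\]
By Proposition~\ref{p:dim-alg-var}.(v) the left-hand side equals $f^+_\Omega(Z) \cap (X^\Omega)_0$, which is closed in $A^\Omega$ for the induced topology; hence $\Phi_\Omega$ is closed in $A^\Omega$.

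The routine parts are the identity $\Phi_\Omega = \tau^+_\Omega(\Gamma_{\Omega^+})$ and the appeal to Chevalley. The step I expect to require the most care is~(ii): one must correctly translate \emph{closed in the set of closed points $A^{\bullet}$} into \emph{the trace on closed points of a genuine Zariski-closed subscheme}, and verify that properness of $X$ really propagates to $f^+_\Omega$ so that its scheme-theoretic image stays closed. Controlling this passage between the Jacobson spaces $X^{\Omega^+}$, $X^\Omega$ and their closed-point sets, via Proposition~\ref{p:dim-image}.(ii) and Proposition~\ref{p:dim-alg-var}.(v), is where the argument could most easily slip.
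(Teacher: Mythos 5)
Your proof is correct and follows essentially the same route as the paper: the identity $\Phi_\Omega = \tau^+_\Omega(\Gamma_{\Omega^+})$ combined with Lemma~\ref{l:local-map}, Chevalley via Proposition~\ref{p:dim-image}.(vi) for (i), and properness of $f^+_\Omega$ together with the closed-point dictionary of Propositions~\ref{p:dim-image}.(ii) and~\ref{p:dim-alg-var}.(v) for (ii). The only cosmetic difference is that in (ii) you work with the closure $Z = \overline{\Gamma_{\Omega^+}}$, whereas the paper takes an arbitrary closed subset $F \subset X^{\Omega^+}$ with $\Gamma_{\Omega^+} = A^{\Omega^+} \cap F$; both choices yield the same computation.
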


\begin{proof}
Let $M$ be a memory set of $\tau$ such that the associated local defining map
$\mu \colon A^M \to A$ is induced by some $K$-scheme morphism  
$f \colon X^M \to X$.
Let $\Omega$ be a finite subset of $G$ and define
$\Omega^+$, $\tau_\Omega^+$, and $f_\Omega^{+}$ as in Subsection~\ref{ss:interiors} and Lemma~\ref{l:local-map}.
 Note  that $\Phi_\Omega=\tau^+_\Omega(\Gamma_{\Omega^{+}})$.
Thus, if $\Gamma_{\Omega^+}$ is a constructible subset of $A^{\Omega^+}$, then 
$\Phi_\Omega = f_\Omega^{+}(\Gamma_{\Omega^+})$ is a constructible subset of $A^\Omega$ by
Proposition~\ref{p:dim-image}.(vi). This shows (i).
\par
Suppose now that the variety $X$ is complete, i.e., proper over $K$. 
Then,   $X^{\Omega^{+}}$ and $X^\Omega$ are also proper over $K$
since fibered products of proper schemes are proper.
As every $K$-morphism between proper $K$-schemes is closed,
 it follows that  $f^+_\Omega \colon X^{\Omega^{+}} \to X^\Omega$ is closed.
Assume now that   $\Gamma_{\Omega^+}$ is closed in $A^{\Omega^+}$.
This means that there exists a closed subset   $F$ of $X^{\Omega^+}$ such that
$\Gamma_{\Omega^+} = A^{\Omega^+} \cap F$ is the set of closed points of $F$.
We then get, by using Proposition~\ref{p:dim-image}.(ii), 
\[
\Phi_\Omega = f_\Omega^{+}(\Gamma_{\Omega^+}) 
= f_\Omega^{+}(A^{\Omega^+} \cap F) = A^\Omega \cap f^+_\Omega(F),     
\]
which implies that $\Phi_\Omega$ is closed in $A^\Omega$. This shows (ii).
\end{proof}

\section{Algebraic mean dimension}
\label{sec:mean-dim}

The definition of algebraic mean dimension we introduce in this section is analogous to that of topological and measure-theoretic entropy, as well as  the various notions of mean dimension introduced by Gromov in~\cite{gromov-esav}.

\begin{definition}
\label{def:mean-dim}
Let $G$ be an amenable group and let $\FF=(F_i)_{i \in I}$ be a F\o lner net for $G$.  
Let $X$ be an algebraic variety over an algebraically closed field $K$ and 
let $A \coloneqq X(K)$ denote the set of $K$-points of $X$. 
The \emph{algebraic mean dimension}  of a subset
$\Gamma \subset A^G$ with respect to  $\FF$ 
is the quantity $\mdim_\FF(\Gamma)$
defined by
\begin{equation}  
\label{e;mdim}
\mdim_\FF(\Gamma) \coloneqq  \limsup_{i \in I} \frac{\dim(\Gamma_{F_i})}{| F_i |},
\end{equation}
where $\dim(\Gamma_{F_i})$ denotes the Krull dimension of $\Gamma_{F_i} \subset A^{F_i} \subset X^{F_i}$ with respect to the Zariski topology
and $|\cdot|$ denotes cardinality.
\end{definition}

Here are some immediate properties of algebraic mean dimension.

\begin{proposition}
\label{p:prop-faciles-mdim}
Let $G$ be an amenable group and let $\FF=(F_i)_{i \in I}$ be a F\o lner net for $G$.  
Let $X$ be an algebraic variety over an algebraically closed field $K$ and 
let $A \coloneqq X(K)$. 
Then the  following hold:
\begin{enumerate}[{\rm (i)}]
\item 
$\mdim_\FF(A^G) = \dim(X)$;
\item
for all subsets $\Gamma,\Gamma' \subset A^G$ such that  $\Gamma \subset \Gamma'$, one has 
$\mdim_\FF(\Gamma)\leq \mdim_\FF(\Gamma')$;
\item
for every subset $\Gamma \subset A^G$, one has 
$\mdim_\FF(\Gamma) \leq \dim( X)$.
\end{enumerate}
\end{proposition}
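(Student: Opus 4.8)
The plan is to prove the three assertions in order, deriving (iii) as a formal consequence of (i) and (ii). Each rests on inputs already established in the excerpt, so the proof is essentially a bookkeeping exercise around the definition \eqref{e;mdim}.

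First I would establish (i). For $\Gamma = A^G$, the projection $\Gamma_{F_i}$ onto $A^{F_i}$ equals all of $A^{F_i}$, since every element of $A^{F_i}$ extends to a configuration in $A^G$ by assigning arbitrary values outside $F_i$. Hence the dimension formula \eqref{e:kdim-a-to-omeg} gives $\dim(\Gamma_{F_i}) = \dim(A^{F_i}) = |F_i|\,\dim(X)$ for every $i \in I$. Substituting into \eqref{e;mdim}, the quotient $\dim(\Gamma_{F_i})/|F_i|$ is the constant $\dim(X)$, so $\mdim_\FF(A^G) = \dim(X)$.

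Next, for (ii), I would observe that the restriction map is monotone with respect to inclusion: if $\Gamma \subset \Gamma'$, then $\Gamma_{F_i} \subset \Gamma'_{F_i}$ as subsets of $A^{F_i}$ for every $i \in I$. Proposition~\ref{p:dim-subsets}.(i) then yields $\dim(\Gamma_{F_i}) \leq \dim(\Gamma'_{F_i})$, so the termwise inequality $\dim(\Gamma_{F_i})/|F_i| \leq \dim(\Gamma'_{F_i})/|F_i|$ holds for all $i$. Since $\limsup$ preserves termwise inequalities, passing to the limit along the net gives $\mdim_\FF(\Gamma) \leq \mdim_\FF(\Gamma')$.

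Finally, (iii) is immediate: applying (ii) to the inclusion $\Gamma \subset A^G$ and then inserting the value from (i) yields $\mdim_\FF(\Gamma) \leq \mdim_\FF(A^G) = \dim(X)$. There is no substantial obstacle here; the argument depends only on the product dimension formula \eqref{e:kdim-a-to-omeg} and the monotonicity of Krull dimension under inclusion, both already available. The single point meriting a word of care is the surjectivity of the projection $A^G \to A^{F_i}$ used in (i), which is clear because configurations may be prescribed freely outside any finite window.
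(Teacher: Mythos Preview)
Your proof is correct and follows essentially the same approach as the paper's: both establish (i) via $(A^G)_{F_i}=A^{F_i}$ together with~\eqref{e:kdim-a-to-omeg}, prove (ii) by monotonicity of Krull dimension (Proposition~\ref{p:dim-subsets}.(i)) and of $\limsup$, and deduce (iii) from (i) and (ii).
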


\begin{proof}
(i) For every $i \in I$, we have that $(A^G)_{F_i} = A^{F_i}$, 
so that 
\begin{align*}
\frac{\dim((A^G)_{F_i})}{| F_i |} &= \frac{\dim(A^{F_i})}{| F_i |} \\
&= \frac{| F_i| \dim(X)}{| F_i |}
&&\text{(by \eqref{e:kdim-a-to-omeg})} \\ 
&= \dim(X).
\end{align*}
It follows that  $\mdim_\FF(A^G) = \limsup_i \dim(X) = \dim(X)$.
\par
(ii) Suppose that $\Gamma \subset \Gamma' \subset A^G$.
 Then, for all $i \in I$, 
we have that $\Gamma_{F_i} \subset \Gamma'_{F_i}$  
and hence 
$\dim(\Gamma_{F_i}) \leq \dim(\Gamma'_{F_i})$ 
by applying Proposition~\ref{p:dim-subsets}.(i).
We deduce that 
\[
\mdim_\FF(\Gamma) = \limsup_{i \in I} \frac{\dim(\Gamma_{F_i})}{| F_i |} \leq \limsup_{i \in I} \frac{\dim(\Gamma'_{F_i})}{| F_i |} = \mdim_\FF(\Gamma').
\]
\par
Assertion (iii) is an immediate consequence of (i) and (ii).
\end{proof}

\section{Algebraic mean dimension and surjectivity}
\label{sec:surjectivity}

\begin{proposition}
\label{p:aut-decroit-mdim} 
Let $G$ be an amenable group and let $\FF=(F_i)_{i \in I}$ be a F\o lner net for $G$.  
Let $X$ be an algebraic variety over an algebraically closed field $K$ and 
let $A \coloneqq X(K)$. 
Let  $\tau \colon A^G \to A^G$ be an algebraic  cellular automaton over $(G,X,K)$. 
Suppose that a subset  
$\Gamma \subset A^G$ satisfies the following property:
for every finite subset $\Omega \subset G$,
the set  $\Gamma_\Omega$ is a constructible subset of  $A^\Omega$ for the Zariski topology.
Then one has 
\[
\mdim_\FF(\tau(\Gamma)) \leq \mdim_\FF(\Gamma).
\]
\end{proposition}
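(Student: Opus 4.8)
The plan is to reduce the global inequality about mean dimension to a local, finite-window statement relating $\dim(\tau(\Gamma)_{F_i})$ to $\dim(\Gamma_{F_i^+})$, and then to absorb the difference between $F_i$ and $F_i^+$ using the Følner property. First I would fix a memory set $M$ of $\tau$ with $1_G \in M$, so that for every finite $\Omega \subset G$ one has the inclusions $\Omega^- \subset \Omega \subset \Omega^+$, where $\Omega^+ = \Omega M$ and $\Omega^- = \{g : gM \subset \Omega\}$. By Proposition~\ref{p:set-config-zariski}.(i), the hypothesis that each $\Gamma_\Omega$ is constructible guarantees that each $\tau(\Gamma)_\Omega = \Phi_\Omega$ is constructible as well, so all the dimensions appearing below are finite and the machinery of Section~2 applies.

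Next I would establish the key local estimate. By Lemma~\ref{l:local-map} there is a $K$-scheme morphism $f^+_\Omega \colon X^{\Omega^+} \to X^\Omega$ inducing $\tau^+_\Omega$ on $K$-points, and since $\Phi_\Omega = \tau^+_\Omega(\Gamma_{\Omega^+}) = f^+_\Omega(\Gamma_{\Omega^+})$, Proposition~\ref{p:dim-image}.(vi) gives
\[
\dim(\Phi_\Omega) = \dim\bigl(f^+_\Omega(\Gamma_{\Omega^+})\bigr) \leq \dim(\Gamma_{\Omega^+}).
\]
Applying this with $\Omega = F_i$ yields $\dim(\Phi_{F_i}) \leq \dim(\Gamma_{F_i^+})$. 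To control $\dim(\Gamma_{F_i^+})$ by $\dim(\Gamma_{F_i})$, I would use the projection $X^{F_i^+} \to X^{F_i}$ deleting the boundary coordinates: its image sends $\Gamma_{F_i^+}$ onto $\Gamma_{F_i}$, and the fibers embed into $X^{\partial F_i}$, so a crude bound gives
\[
\dim(\Gamma_{F_i^+}) \leq \dim(\Gamma_{F_i}) + |\partial F_i|\,\dim(X),
\]
where the boundary term is estimated via \eqref{e:kdim-a-to-omeg} applied to the extra coordinates.

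Combining these two inequalities produces
\[
\dim(\Phi_{F_i}) \leq \dim(\Gamma_{F_i}) + |\partial F_i|\,\dim(X).
\]
Dividing by $|F_i|$ and taking $\limsup_i$, the boundary contribution vanishes because $|\partial F_i|/|F_i| \to 0$ by \eqref{e:boundary-folner}, and what survives is exactly $\mdim_\FF(\tau(\Gamma)) \leq \mdim_\FF(\Gamma)$, as desired.

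I expect the main obstacle to be the justification of the boundary estimate $\dim(\Gamma_{F_i^+}) \leq \dim(\Gamma_{F_i}) + |\partial F_i|\,\dim(X)$ at the level of constructible subsets. The clean way is to factor $\Gamma_{F_i^+}$ through the projection $\pi \colon X^{F_i^+} \to X^{F_i}$ and invoke a fiber-dimension bound; one must check that the relevant fibers of $\pi$ restricted to $\Gamma_{F_i^+}$ sit inside copies of $X^{\partial F_i}$, whose dimension is $|\partial F_i|\,\dim(X)$ by \eqref{e:kdim-a-to-omeg}. Subtleties here are that $\Gamma_{F_i^+}$ is merely constructible rather than irreducible, so I would pass to $\dim(\overline{\Gamma_{F_i^+}})$ using Proposition~\ref{p:dim-alg-var}.(vi) and apply the fiber-dimension theorem (in the spirit of Proposition~\ref{p:dim-fiber}) component by component; everything else is a routine assembly of the results already proved in Sections~2 and~3.
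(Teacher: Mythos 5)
Your proof is correct and is essentially the paper's argument run in mirror image: the paper bounds $\dim(\tau(\Gamma)_{\Omega^{-}})$ by $\dim(\Gamma_\Omega)$ via $f^-_\Omega$ and then pads from $\Omega^{-}$ up to $\Omega$, whereas you bound $\dim(\tau(\Gamma)_\Omega)$ by $\dim(\Gamma_{\Omega^{+}})$ via $f^+_\Omega$ and pad from $\Omega$ up to $\Omega^{+}$; both routes rest on Lemma~\ref{l:local-map}, Proposition~\ref{p:dim-image}.(vi), and a boundary term killed by the F\o lner property. The step you flag as the main obstacle requires no fiber-dimension theorem or passage to closures: the inclusion $\Gamma_{F_i^+} \subset \Gamma_{F_i} \times A^{F_i^+ \setminus F_i}$ together with Proposition~\ref{p:dim-subsets}.(i) and Proposition~\ref{p:prod-alg-var-alg-closed}.(iii) (applicable because $\Gamma_{F_i}$ is constructible by hypothesis) yields $\dim(\Gamma_{F_i^+}) \leq \dim(\Gamma_{F_i}) + \vert \partial F_i\vert \dim(X)$ directly, exactly as in the paper's own proof of Proposition~\ref{p:**-implies-mdim}.
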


\begin{proof}
Let $\Gamma'\coloneqq  \tau(\Gamma)$. 
Let $M\subset G$ be a memory set of $\tau$ such that the associated local defining map 
$\mu \colon A^M \to A$ satisfies $\mu = f^{(K)}$ 
for some $K$-scheme morphism $f \colon X^M \to X$.
 By Remark~\ref{rem:independent-memory}, 
 up to replacing $M$ by $M \cup \{1_G\}$ if necessary, we can assume that $1_G \in M$.
 Let $\Omega$ be a finite subset of $G$.
 Observe that,
using the notation introduced in Subsection~\ref{ss:interiors} and Lemma~\ref{l:local-map},
\[
\Gamma'_{\Omega^{-}} = \tau^-_\Omega(\Gamma_\Omega) = f^-_\Omega(\Gamma_\Omega).   
\]
Therefore, it follows from Proposition \ref{p:dim-image}.(vi) that 
\begin{equation}
\label{e:maj-Y-sur-int-ome-lin}
\dim(\Gamma'_{\Omega^{-}}) \leq \dim(\Gamma_\Omega).
\end{equation}
\par 
Since $1_G \in M$, we have that $\Omega^{-} \subset \Omega \subset \Omega^+$ and 
thus 
\[
\Gamma'_\Omega \subset \Gamma'_{\Omega^{-}}\times A^{\Omega \setminus
\Omega^{-}} \subset X^{\Omega^-} \times_K X^{\Omega \setminus \Omega^-}=X^\Omega.
\] 
Therefore, we find that 
\begin{align*}
\dim(\Gamma'_{\Omega})
 &\leq \dim(\Gamma'_{\Omega^{-}} \times A^{\Omega \setminus \Omega^{-}}) \quad (\text{by Proposition } \ref{p:dim-subsets}) \\
 &=  \dim(\Gamma'_{\Omega^{-}}) + \dim(A^{\Omega \setminus \Omega^{-}}) \quad (\text{by Proposition \ref{p:prod-alg-var-alg-closed}.(iii)})\\
 &= \dim(\Gamma'_{\Omega^{-}}) + \vert \Omega \setminus \Omega^{-}\vert \dim(A) \quad (\text{by Proposition \ref{p:prod-alg-var-alg-closed}.(iii)}) \\
 &\leq \dim(\Gamma_\Omega) + \vert \Omega \setminus \Omega^{-}\vert \dim(A)\quad (\text{by } \eqref{e:maj-Y-sur-int-ome-lin})
\end{align*}
Since $\Omega \setminus \Omega^{-}\subset \partial \Omega$, 
we deduce that
\[
\dim(\Gamma'_\Omega) \leq \dim(\Gamma_\Omega)+ \vert \partial \Omega\vert \dim(A).
\]
Taking $\Omega = F_i$, this gives us 
\[
\frac{\dim(\Gamma'_{F_i})}{\vert F_i \vert}
\leq \frac{\dim(\Gamma_{F_i})}{\vert F_i \vert} + \frac{\vert \partial F_i  \vert}{\vert F_i \vert} \dim(A).
\]
Since 
\[
\lim_{i \in I} \vert \partial F_i  \vert / \vert F_i \vert = 0
\] 
by~\eqref{e:boundary-folner}, 
we conclude that 
\begin{align*}
\mdim_\FF(\Gamma') &= \limsup_{i \in I} \frac{\dim(\Gamma'_{F_i})}{\vert F_i \vert}\\
&\leq \limsup_{i \in I} \frac{\dim(\Gamma_{F_i})}{\vert F_i \vert} = \mdim_\FF(\Gamma).
\end{align*}
\end{proof}

\begin{lemma}
\label{l:inv-mdim-non-max}
Let $G$ be an amenable group and let $\FF=(F_i)_{i \in I}$ be a F\o lner net for $G$.  
Let $X$ be an irreducible algebraic variety over an algebraically closed field $K$ and 
let $A\coloneqq X(K)$.
Suppose that  $\Gamma \subset A^G$ satisfies the following condition:
\begin{enumerate}[{\rm (C)}]
\item
there exist finite subsets $E,E' \subset G$ and an $(E,E')$-tiling $T \subset G$ such that for all $g \in T$,  
$\Gamma_{gE} \subsetneqq A^{gE}$ 
is a proper closed subset of $A^{gE}$ for the Zariski topology.  
\end{enumerate}
Then one has $\mdim_\FF(\Gamma) < \dim(X)$.
\end{lemma}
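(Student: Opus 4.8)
The plan is to use the tiling $T$ to detect, inside each Følner set $F_i$, a positive proportion of coordinate blocks on which the projection of $\Gamma$ loses at least one unit of Krull dimension, and then to add up these losses. Throughout write $d \coloneqq \dim(X)$, and for each $i \in I$ set $T_i \coloneqq \{g \in T : gE \subset F_i\}$ as in Proposition~\ref{p:folner-tiling}.

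First I would record the local dimension loss on each tile. Since $X$ is irreducible and $K$ is algebraically closed, iterating Proposition~\ref{p:prod-alg-var-alg-closed}.(iv) shows that $X^{gE}$ is irreducible, so by Corollary~\ref{c:jacob-irred} the space $A^{gE} = (X^{gE})_0$ is irreducible, with $\dim(A^{gE}) = |E|\,d$ by~\eqref{e:kdim-a-to-omeg}. By hypothesis~(C), $\Gamma_{gE}$ is a \emph{proper} closed subset of the irreducible finite-dimensional space $A^{gE}$, so Proposition~\ref{p:dim-subsets}.(ii) forces $\dim(\Gamma_{gE}) < |E|\,d$; as the Krull dimensions in play are non-negative integers, this improves to $\dim(\Gamma_{gE}) \leq |E|\,d - 1$ for every $g \in T_i$.

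Next I would bound $\dim(\Gamma_{F_i})$ by a product estimate. By~(T-1) the sets $gE$, $g \in T_i$, are pairwise disjoint subsets of $F_i$; writing $R_i \coloneqq F_i \setminus \bigsqcup_{g \in T_i} gE$ gives a partition of $F_i$ and a corresponding factorization $A^{F_i} = \bigl(\prod_{g \in T_i} A^{gE}\bigr) \times A^{R_i}$. Restriction of configurations yields the inclusion $\Gamma_{F_i} \subset \bigl(\prod_{g \in T_i} \Gamma_{gE}\bigr) \times A^{R_i}$, so Proposition~\ref{p:dim-subsets}.(i) together with the product-dimension formula (Proposition~\ref{p:prod-alg-var-alg-closed}.(iii), applied iteratively to the closed, hence constructible, factors) gives
\[
\dim(\Gamma_{F_i}) \leq \sum_{g \in T_i} \dim(\Gamma_{gE}) + \dim(A^{R_i}) \leq |T_i|\,(|E|\,d - 1) + |R_i|\,d.
\]
Using $|F_i| = |T_i|\,|E| + |R_i|$, this simplifies to $\dim(\Gamma_{F_i}) \leq d\,|F_i| - |T_i|$.

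Finally I would divide by $|F_i|$ and invoke the Følner–tiling growth estimate: Proposition~\ref{p:folner-tiling} supplies $\alpha > 0$ and $i_0 \in I$ with $|T_i| \geq \alpha\,|F_i|$ for all $i \geq i_0$, whence $\dim(\Gamma_{F_i})/|F_i| \leq d - \alpha$ for all such $i$. Taking $\limsup$ then yields $\mdim_\FF(\Gamma) \leq d - \alpha < d = \dim(X)$, as desired. The point I expect to require the most care is converting the merely strict inequality $\dim(\Gamma_{gE}) < |E|\,d$ into a \emph{uniform} integer loss of at least one dimension per tile; it is precisely this unit loss, multiplied by the linear-in-$|F_i|$ lower bound $|T_i| \geq \alpha |F_i|$ on the number of tiles, that manufactures the uniform gap $\alpha$ and hence a strict inequality that survives passage to the $\limsup$.
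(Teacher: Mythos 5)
Your proposal is correct and follows essentially the same argument as the paper: the unit dimension loss on each tile via irreducibility of $A^{gE}$ and Proposition~\ref{p:dim-subsets}.(ii), the product bound over the partition $F_i = \bigl(\coprod_{g\in T_i} gE\bigr) \sqcup R_i$ giving $\dim(\Gamma_{F_i}) \leq |F_i|\dim(X) - |T_i|$, and the conclusion via Proposition~\ref{p:folner-tiling}. No gaps.
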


\begin{proof}
For each $i \in I$, define, as in Proposition~\ref{p:folner-tiling},  the subset $T_i \subset T$ by
$ T_i \coloneqq  \{g \in T : gE \subset F_i\}$
and set
\[
F_i^* \coloneqq  F_i \setminus \coprod_{g \in T_i} gE,
\]
where $\coprod$ denotes disjoint union.
For all $g \in T$, the set  $\Gamma_{gE} $ is a proper closed subset
of $A^{gE}$ by our hypothesis (C). 
As $A^{gE}$ is irreducible since $X$ is irreducible and $K$ is algebraically closed 
(cf. Proposition~\ref{p:prod-alg-var}.(iv) and Corollary~\ref{c:jacob-irred}), 
it follows from Proposition~\ref{p:dim-subsets}.(ii)  that   
\begin{equation}
\label{e:maj-pi-X-Nj-lin}
\dim(\Gamma_{gE}) \leq \dim(A^{gE}) - 1 = |gE | \dim(A) - 1 =|g E| \dim(X) - 1
\end{equation}
for all $g \in T$.
Now observe that, for each $i \in I$, 
\[
\Gamma_{F_i} \subset A^{F_i^*} \times \prod_{g \in T_i} \Gamma_{g E} \subset X^{F_i^*} \times_K \prod_{g \in T_i} X^{gE}= X^{F_i},
\]
so that
\begin{align*}
 \dim(\Gamma_{F_i}) &\leq \dim(A^{F_i^*} \times \prod_{g \in T_i} \Gamma_{g E}) 
\quad   \text{(by Proposition~\ref{p:dim-subsets}.(i))} \\
 &= \vert F_i^* \vert \dim(A)+ \sum_{g \in T_i} \dim(\Gamma_{gE}) 
\quad  \text{(by Proposition~\ref{p:prod-alg-var-alg-closed}.(iii))} \\
 &\leq \vert F_i^* \vert \dim(A) + \sum_{g \in T_i} ({\vert gE \vert}\dim(A) - 1)
\quad \text{(by~\eqref{e:maj-pi-X-Nj-lin})} \\
 &= \biggl(\vert F_i^* \vert  + \sum_{g \in T_i } \vert gE \vert\biggr)\dim(A) - \vert T_i \vert\\
&= \vert F_i \vert \dim(A) - \vert T_i \vert
= \vert F_i \vert \dim(X) - \vert T_i \vert.
\end{align*}
Now, by virtue of Proposition~\ref{p:folner-tiling}, 
there exist $\alpha > 0$ and $i_0 \in i$ such that $\vert T_i \vert \geq \alpha
\vert F_i \vert$ for all $i \geq i_0$.  
We deduce that, for all $i \geq i_0$, 
\[
\frac{\dim(\Gamma_{F_i})}{\vert F_i \vert} \leq \dim(X) - \alpha.
\]
This implies  that 
\[
\mdim_\FF(\Gamma) = \limsup_{i \in I} \frac{\dim(\Gamma_{F_i})}{\vert F_i \vert} \leq \dim(X)- \alpha < \dim(X).
\]
\end{proof}

\begin{lemma}
\label{l:Ramma-max-mdim}
Let $G$ be an amenable group and let $\FF=(F_i)_{i \in I}$ be a F\o lner net for $G$.  
Let $X$ be an irreducible algebraic variety  over an algebraically closed field $K$ and 
let $A \coloneqq X(K)$.
Suppose that a $G$-invariant subset $\Gamma \subset A^G$ satisfies the following conditions:
\begin{enumerate}[\rm (D1)]
\item
$\Gamma$ is closed in $A^G$ for the prodiscrete topology;
\item
for every finite subset $\Omega$ of $G$, the set $\Gamma_\Omega$ is  closed in $A^\Omega$ for the Zariski topology;
\item
$\mdim_\FF(\Gamma) = \dim(X)$.
\end{enumerate}
Then one has $\Gamma = A^G$.
\end{lemma}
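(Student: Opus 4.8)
The plan is to prove the contrapositive: assuming $\Gamma \neq A^G$, I will show that $\mdim_\FF(\Gamma) < \dim(X)$, which contradicts (D3). The strategy is to produce a situation where Lemma~\ref{l:inv-mdim-non-max} applies, i.e.\ to find finite sets $E, E'$ and an $(E,E')$-tiling $T$ such that $\Gamma_{gE} \subsetneq A^{gE}$ for every $g \in T$. Since $\Gamma \neq A^G$ and $\Gamma$ is closed in the prodiscrete topology (D1), there exists a configuration $c \in A^G \setminus \Gamma$, and by definition of the prodiscrete topology there is a finite ``witnessing'' subset $E \subset G$ such that no element of $\Gamma$ agrees with $c$ on $E$; equivalently, $c\vert_E \notin \Gamma_E$, so $\Gamma_E \subsetneq A^E$ is a proper subset. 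By (D2), $\Gamma_E$ is moreover \emph{closed} in $A^E$, hence a proper closed subset.

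The next step is to propagate this single proper-closed inequality to every translate $gE$ with $g \in T$, using the $G$-invariance of $\Gamma$. First I would enlarge $E$ if necessary so that $E$ is nonempty, and set $E' \coloneqq EE^{-1}$; Proposition~\ref{p:tilings-exist} then guarantees the existence of an $(E,E')$-tiling $T \subset G$. The key observation is that because $\Gamma$ is $G$-invariant, the shift by $g$ carries $\Gamma$ to itself, and this identifies $\Gamma_{gE}$ with $\Gamma_E$ up to the canonical isomorphism $A^{gE} \cong A^E$ induced by left multiplication by $g^{-1}$ (the same reindexing isomorphism used in Lemma~\ref{l:local-map}). Under this identification $\Gamma_{gE}$ corresponds to $\Gamma_E$, which is a proper closed subset of $A^E$; since the reindexing map is a homeomorphism for the Zariski topology, $\Gamma_{gE} \subsetneq A^{gE}$ is a proper closed subset for every $g \in T$. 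This establishes condition~(C) of Lemma~\ref{l:inv-mdim-non-max}.

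With condition~(C) verified, Lemma~\ref{l:inv-mdim-non-max} immediately yields $\mdim_\FF(\Gamma) < \dim(X)$, contradicting hypothesis~(D3). Therefore the assumption $\Gamma \neq A^G$ is untenable, and we conclude $\Gamma = A^G$.

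The step I expect to require the most care is verifying that $G$-invariance really does transport the proper-closed-ness of $\Gamma_E$ to each $\Gamma_{gE}$ at the level of the Zariski topology, not merely set-theoretically. The subtlety is that $\Gamma_{gE}$ is the projection of $\Gamma$ onto the coordinates indexed by $gE$, and I must check that the shift homeomorphism on configurations intertwines these projections with the scheme-theoretic reindexing isomorphism $X^{gE} \to X^E$, so that closedness and properness are genuinely preserved. This is essentially the equivariance of the construction in Subsection~\ref{ss:interiors}, but it is worth spelling out that the $G$-action is by Zariski homeomorphisms on each finite-coordinate projection. Everything else—the existence of the witnessing set $E$ from prodiscrete closedness, the existence of the tiling, and the final appeal to the lemma—is routine once this compatibility is in hand.
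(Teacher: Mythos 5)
Your proposal is correct and follows essentially the same route as the paper's proof: extract a witnessing finite set $E$ with $\Gamma_E \subsetneq A^E$ from prodiscrete closedness, use $G$-invariance to get $\Gamma_{gE} \subsetneq A^{gE}$ for all $g$, invoke Proposition~\ref{p:tilings-exist} for an $(E,E')$-tiling, and conclude via Lemma~\ref{l:inv-mdim-non-max}. Your extra care about the shift intertwining the Zariski-topological structure on the finite-coordinate projections is a reasonable point to flag, but it is exactly the equivariance the paper relies on implicitly.
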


\begin{proof}
We proceed by contradiction.
Suppose that there is a configuration $c \in A^G$ that does not belong to $\Gamma$.
By (D1), the set $A^G \setminus \Gamma$ is an open subset of $A^G$ for the prodiscrete topology. 
Thus, we can find a finite subset $E \subset G$ such that $c\vert_E \notin \Gamma_E$.
This implies that  $\Gamma_E \subsetneqq A^E$.
As $\Gamma$ is $G$-invariant, we have that
\begin{equation}
\label{e:Gamma-gE-not-all} 
\Gamma_{gE} \subsetneqq A^{gE} \quad \text{for all }g\in G.
\end{equation} 
By Proposition~\ref{p:tilings-exist}, there exist a finite subset $E' \subset G$ and an $(E,E')$-tiling 
$T \subset G$.
Since $\Gamma$ satisfies the hypotheses of Lemma~\ref{l:inv-mdim-non-max} 
by (D2) and~\eqref{e:Gamma-gE-not-all},
we deduce that $\mdim_\FF(\Gamma) < \dim(X)$, which contradicts (D3).  
\end{proof}

We can now prove the main result of this section. 

\begin{theorem}
\label{t:sur-dim}
Let $G$ be an amenable group and let $\FF$ be a F\o lner net for $G$.  
Let $X$ be an irreducible complete algebraic variety  over an algebraically closed field $K$ and 
let $A \coloneqq X(K)$.
Let $\tau \colon A^G \to A^G$ be an algebraic  cellular automaton 
 over $(G,X,K)$.  
Suppose that     $\mdim_\FF(\tau(A^G)) =\dim(X)$.
Then $\tau$ is surjective.
\end{theorem}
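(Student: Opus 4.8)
The plan is to apply Lemma~\ref{l:Ramma-max-mdim} to the subset $\Gamma := \tau(A^G)$. First, $\Gamma$ is $G$-invariant: since $\tau$ is $G$-equivariant, $g\Gamma = \tau(g A^G) = \tau(A^G) = \Gamma$ for every $g \in G$. Condition (D3) holds by the hypothesis $\mdim_\FF(\tau(A^G)) = \dim(X)$. Condition (D2) follows immediately from Proposition~\ref{p:set-config-zariski}.(ii): taking $\Gamma = A^G$ there, for which $(A^G)_\Omega = A^\Omega$ is trivially closed for every finite $\Omega$, and using that $X$ is complete, we get that $\Phi_\Omega := (\tau(A^G))_\Omega$ is Zariski-closed in $A^\Omega$ for every finite $\Omega \subset G$. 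Granting (D1), Lemma~\ref{l:Ramma-max-mdim} then yields $\tau(A^G) = A^G$, i.e.\ $\tau$ is surjective.

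The remaining and main point is to verify (D1), namely that $\Phi := \tau(A^G)$ is closed in $A^G$ for the prodiscrete topology; this is where completeness of $X$ is used in an essential, global way. I would argue as follows. Let $c$ lie in the prodiscrete closure of $\Phi$. Then for every finite $\Omega \subset G$ one has $c\vert_\Omega \in \Phi_\Omega = \im(\tau^+_\Omega)$. Writing $f^+_\Omega \colon X^{\Omega^+} \to X^\Omega$ for the scheme morphism of Lemma~\ref{l:local-map} (with $f_\Omega^{+(K)} = \tau^+_\Omega$), set $W_\Omega := (f^+_\Omega)^{-1}(c\vert_\Omega)$, a closed subscheme of $X^{\Omega^+}$. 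Since $c\vert_\Omega$ is a closed point of $X^\Omega$ lying in the image, $W_\Omega$ is non-empty and, by Proposition~\ref{p:dim-image}.(ii), its set $Z_\Omega := W_\Omega(K)$ of $K$-points is non-empty. For $\Omega \subseteq \Omega'$ one has $\Omega^+ \subseteq (\Omega')^+$, and the projection $X^{(\Omega')^+} \to X^{\Omega^+}$ restricts to a morphism $\rho_{\Omega'\Omega} \colon W_{\Omega'} \to W_\Omega$, because any pattern mapping to $c\vert_{\Omega'}$ restricts to a pattern mapping to $c\vert_\Omega$. These maps are compatible, and any element of $\varprojlim_\Omega Z_\Omega$ supplies a configuration $c' \in A^G$ with $\tau(c') = c$, giving $c \in \Phi$.

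To see that this inverse limit is non-empty I would run a Mittag-Leffler argument made available by completeness. As $X$ is complete, each $X^{\Omega^+}$ is proper over $K$, so every $\rho_{\Omega'\Omega}$ is a closed morphism of complete $K$-varieties and $\im(\rho_{\Omega'\Omega})$ is a non-empty closed subscheme of $W_\Omega$. Fixing $\Omega$, the family $\{\im(\rho_{\Omega'\Omega}) : \Omega' \supseteq \Omega\}$ is downward directed, so by Noetherianity of $W_\Omega$ (Proposition~\ref{p:dim-alg-var}.(i)) it has a smallest member $W_\Omega^*$, a non-empty closed subscheme. A short computation using $\rho_{\Omega''\Omega} = \rho_{\Omega'\Omega}\circ\rho_{\Omega''\Omega'}$ shows that each $\rho_{\Omega'\Omega}$ carries $W_{\Omega'}^*$ onto $W_\Omega^*$; since $K$ is algebraically closed, a surjective morphism of $K$-varieties is surjective on $K$-points (a closed point of a non-empty fibre over a closed point is closed in the source), so the transition maps $W_{\Omega'}^*(K) \to W_\Omega^*(K)$ are surjective. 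An inverse system of non-empty sets over a directed index set with surjective transition maps has non-empty limit, and any element of $\varprojlim_\Omega W_\Omega^*(K)$ assembles into the desired $c'$.

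The main obstacle is precisely this verification of (D1): the delicate steps are establishing that the bonding maps $\rho_{\Omega'\Omega}$ have closed image and that the stabilized fibres $W_\Omega^*$ form a surjective system on $K$-points, both of which rely crucially on the completeness of $X$ (so that the relevant morphisms are closed) together with the Noetherianity of the $X^\Omega$. Once (D1) is in hand, everything else is a direct application of the lemmas already proved.
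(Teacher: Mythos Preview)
Your overall strategy coincides with the paper's: apply Lemma~\ref{l:Ramma-max-mdim} to $\Gamma = \tau(A^G)$, check (D1)--(D3), and conclude. Your verifications of $G$-invariance, of (D2) via Proposition~\ref{p:set-config-zariski}.(ii), and of (D3) are correct. The paper handles (D1) by citing Theorem~6.1 of~\cite{cscp-alg-ca}; you instead supply a direct Mittag-Leffler argument, which is natural and mostly right, but has one genuine gap.

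The gap is the sentence ``an inverse system of non-empty sets over a directed index set with surjective transition maps has non-empty limit.'' This is false for general directed index sets. For instance, take $I$ to be the set of countable subsets of $\omega_1$ ordered by inclusion, and for each countable $S\subset\omega_1$ let $X_S$ be the set of injections $S\hookrightarrow\omega$ with coinfinite image; restriction maps are surjective, each $X_S$ is non-empty, yet an element of $\varprojlim X_S$ would assemble to an injection $\omega_1\hookrightarrow\omega$, so the limit is empty. Your argument is therefore incomplete when $G$ is uncountable (for countable $G$, the poset of finite subsets has a cofinal chain and a direct recursive choice of preimages finishes the proof).

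The fix uses precisely the extra structure you already set up but did not invoke at the end: the $W_\Omega^{*}$ are non-empty Noetherian (hence quasi-compact) schemes and the $\rho_{\Omega'\Omega}$ are closed. One clean way is Zorn's lemma on the poset of closed sub-inverse-systems $(Z_\Omega)_\Omega$ with $Z_\Omega\subseteq W_\Omega^{*}$ non-empty closed and $\rho_{\Omega'\Omega}(Z_{\Omega'})=Z_\Omega$, ordered by reverse inclusion: chains have upper bounds because, for each fixed $\Omega$, the descending chain of closed subsets of the Noetherian space $X^{\Omega^+}$ stabilizes, so the intersection is non-empty and the surjectivity persists. A minimal such system has every $Z_\Omega$ irreducible, the generic points are compatible under the $\rho$'s, and one then specializes to compatible closed (i.e., $K$-rational) points. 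Alternatively, one may quote that a cofiltered limit of non-empty spectral spaces with spectral transition maps is non-empty and then pass to $K$-points. Either way, some appeal to Noetherianity or quasi-compactness---not merely surjectivity of transition maps on sets---is required to conclude (D1).
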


\begin{proof}
Let us check that  $\Gamma \coloneqq  \tau(A^G)$ satisfies the hypotheses of 
Lemma~\ref{l:Ramma-max-mdim}.
Condition (D1), that is,
the fact that $\Gamma$ is closed in $A^G$ for the prodiscrete topology,
 follows from  Theorem~6.1 in~\cite{cscp-alg-ca}.
 On the other hand,
Condition (D2), that is, the fact that $\Gamma_\Omega$ is closed in $A^\Omega$ with respect to the Zariski topology for every finite subset $\Omega \subset G$,  is satisfied by 
Proposition~\ref{p:set-config-zariski}.(ii).
By applying Lemma~\ref{l:Ramma-max-mdim},
we conclude that $\Gamma = A^G$.
This shows that  $\tau$ is surjective.
\end{proof}

\section{Algebraic mean dimension and weak pre-injectivity}
\label{sec:weeak-pre-inj} 

In this section, we introduce two notions of weak pre-injectivity for algebraic cellular automata, namely 
$(*)$-pre-injectivity and $(**)$-pre-injectivity. 
We shall see that these two notions are equivalent under general hypotheses and that they are both implied by pre-injectivity. 
\par
We use the following notation. 
Given a set $A$, a group $G$, a finite subset $\Omega \subset G$, a subset $D \subset A^\Omega$, and an element $p \in A^{G \setminus \Omega}$, we write 
\[
D_p\coloneqq D\times \{p\} = \{c \in A^G : c\vert_\Omega\in D \text{ and } c\vert_{G \setminus \Omega}=p \}.
\]
We say that a subset $\Gamma \subset A^G$ has \emph{finite support} if $\Gamma=D_p$ for some $D,p$ as above.  
\par
Let $\tau \colon A^G \to A^G$ be a cellular automaton over the group $G$ and the alphabet $A$ 
with  memory set $M$. 
Observe that if $\Gamma \subset A^G$ has finite support then $\tau(\Gamma)$ also has finite support. 
Indeed, $\tau(D_p)= R_s$ for some subset $R \subset A^{\Omega^+}$ and $s=s(\tau,p) \in A^{G \setminus \Omega^+}$. 
Suppose now that $X$ is an algebraic variety over an algebraically closed field $K$ and $A=X(K)$. 
Then we write $\dim(D_p)\coloneqq \dim(D)$, where $\dim(D)$ is the Krull dimension of 
$D \subset A^\Omega$ with respect to  the Zariski topology. 
Note  that $\dim(D_p)$ is well-defined. 
Indeed, suppose that $C_q=D_p$ for some $C \subset A^\Lambda$ and $q \in A^{G\setminus \Lambda}$, where  $\Lambda$ is a finite subset of $G$. 
Then clearly $C$ and $D$ are homeomorphic so that
$\dim(C_q) = \dim(C) = \dim(D) = \dim(D_p)$.

\begin{definition}
\label{def:*-**}
Let $G$ be a group and let $X$ be an algebraic variety over an algebraically closed  field $K$.
Let $A\coloneqq X(K)$ and let $\tau \colon A^G
\to A^G$ be an algebraic cellular automaton over $(G,X,K)$. 
\par
We say that $\tau$ is \emph{$(*)$-pre-injective} if
there do not exist a finite subset $\Omega \subset G$
 and a subset $H \subsetneq A^\Omega $ that is closed for the Zariski topology such that
 \[
 \tau((A^\Omega)_p)=\tau(H_p) \quad \text{ for all } p \in A^{G\setminus \Omega}. 
 \]
 \par
 We say that $\tau$ is \emph{$(**)$-pre-injective} if
there does not exist a finite subset $\Omega \subset G$ such that
 \[
 \dim(\tau((A^\Omega)_p)) <\dim(A^\Omega) \quad \text{ for all } p \in A^{G\setminus \Omega}. 
 \]
\end{definition}

\begin{remark}
Let us note that $(*)$-pre-injectivity and $(**)$-pre-injectivity as well as pre-injectivity itself, are \emph{local} properties. 
More precisely, using again the notation of  Definition~\ref{def:*-**} and Subsection~\ref{ss:interiors}, 
let $M$ be a  memory set of $\tau$ such that $1_G \in M$ and $M = M^{-1}$. 
Then  $(*)$-pre-injectivity amounts to saying that, 
for every finite subset $\Omega \subset G$, there exist no proper closed subsets $H\subsetneq A^\Omega$ such that 
 \[
\tau^+_\Omega( A^\Omega \times \{q\})=\tau^+_\Omega(H \times \{q\}) \quad \text{ for all } q \in A^{\Omega^+ \setminus \Omega}. 
 \]
 Similarly, $(**)$-pre-injectivity means  (by Proposition~\ref{p:dim-image}.(iii)) that for every finite subset $\Omega\subset G$, we have 
 \[
 \dim(\tau^+_\Omega(A^\Omega  \times \{q\})) = \dim(A^\Omega) \quad \text{ for some } q \in A^{\Omega^+\setminus \Omega}. 
 \]
 Finally, pre-injectivity means that for every finite subset $\Omega\subset G$ 
 and every $q \in A^{\Omega^{++} \setminus \Omega}$ (where $\Omega^{++}=(\Omega^+)^+$), the restriction of  
\[
 \tau^+_{\Omega^+} \colon A^{\Omega^{++}}  \to A^{\Omega^+}
\]
to $A^\Omega \times \{q\} \subset A^{\Omega^{++}}$ is injective.
 \end{remark}

In order to establish, in the next proposition,  some key relations between pre-injectivity, $(*)$-pre-injectivity, and $(**)$-pre-injectivity, 
we shall use the following general auxiliary result. 

\begin{lemma}
\label{l:chow-**}
Let $X$ be an irreducible complete algebraic variety over an algebraically closed field $K$. 
Then there exists a proper closed subset $H \subsetneq X$ satisfying the following property: 
\begin{enumerate}[{\rm (P)}]
\item
if  $Y$ is a $K$-algebraic variety with $\dim(Y) < \dim(X)$ and $h \colon X\twoheadrightarrow Y$ is a surjective $K$-scheme morphism, then one has $h(H)=Y$.  
 \end{enumerate}
\end{lemma}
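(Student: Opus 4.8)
The plan is to transport the problem from the complete variety $X$ to a projective model by means of Chow's lemma, so as to be able to invoke the projective dimension theorem in the form of Corollary~\ref{c:proj-dim}. We may assume $n := \dim(X) \geq 1$: if $n = 0$ there is no nonempty $K$-algebraic variety of dimension strictly less than $n$, and a surjection onto the empty scheme from the irreducible (hence nonempty) $X$ is impossible, so the property (P) holds vacuously and one may take $H = \varnothing$.

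First I would apply Chow's lemma (Theorem~\ref{t:chow-lemma}) to produce an irreducible projective $K$-variety $\tilde{X}$, realized as a closed subscheme of some $\Proj^N_K$, together with a surjective $K$-morphism $\pi \colon \tilde{X} \to X$ with $\dim(\tilde{X}) = n$. Using Remark~\ref{rem:section-exist} I would then select a hyperplane $H' \subset \Proj^N_K$ not containing $\tilde{X}$ and set $\tilde{L} := H' \cap \tilde{X}$; since $\tilde{X}$ is irreducible and $\tilde{L} \subsetneq \tilde{X}$ is a proper closed subset, Proposition~\ref{p:dim-subsets}.(ii) yields $\dim(\tilde{L}) \leq n - 1$. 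The candidate is $H := \pi(\tilde{L})$. Because $\tilde{X}$ is proper over $K$, the morphism $\pi$ is proper and hence closed, so $H$ is closed in $X$; moreover $\dim(H) \leq \dim(\tilde{L}) \leq n - 1 < n$ by Proposition~\ref{p:dim-image}.(iii), whence $H \subsetneq X$ is a \emph{proper} closed subset again by Proposition~\ref{p:dim-subsets}.(ii).

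To verify property (P), I would take a surjective $h \colon X \to Y$ with $\dim(Y) < n$ and set $g := h \circ \pi \colon \tilde{X} \to Y$, which is again surjective. As the continuous surjective image of the irreducible space $X$, the variety $Y$ is irreducible, and since $X$ is proper over $K$ the morphism $h$ is proper, so $h(H) = g(\tilde{L})$ is closed in $Y$. It then suffices to show that $g(\tilde{L})$ contains every closed point of $Y$: indeed $Y$ is Jacobson (Proposition~\ref{p:dim-alg-var}.(ii)), so $Y_0$ is very dense and in particular dense in $Y$, and a closed subset containing the dense set $Y_0$ must be all of $Y$. Fixing a closed point $y \in Y$, its fiber $g^{-1}(y)$ is a nonempty closed subscheme of $\tilde{X}$; choosing a closed point $x$ of this fiber (which is then closed in $\tilde{X}$ as well) and applying Proposition~\ref{p:dim-fiber}.(ii) to the morphism $g$ between the irreducible varieties $\tilde{X}$ and $Y$, I obtain
\[
\dim(g^{-1}(y)) \geq \dim(\tilde{X}) - \dim(Y) \geq n - (n-1) = 1.
\]
Corollary~\ref{c:proj-dim}, applied with $C = g^{-1}(y)$, then guarantees $\tilde{L} \cap g^{-1}(y) \neq \varnothing$, so that $y \in g(\tilde{L}) = h(H)$. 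This gives $Y_0 \subset h(H)$ and hence $h(H) = Y$, as required.

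The main obstacle, and the conceptual heart of the argument, is precisely this reduction from \emph{complete} to \emph{projective}: the hyperplane-intersection tool of Corollary~\ref{c:proj-dim} is available only inside projective space, so the whole strategy hinges on pushing a hyperplane section forward along the Chow cover $\pi$ and checking that the resulting set $H = \pi(\tilde{L})$ still meets every fiber of a dimension-dropping surjection. The supporting technical points — properness (hence closedness) of $\pi$ and $h$, the irreducibility of $Y$, and above all the \emph{uniform} lower bound $\dim(g^{-1}(y)) \geq 1$ on all fibers furnished by Proposition~\ref{p:dim-fiber}.(ii) — are where care will be needed, but each is supplied by the results cited above.
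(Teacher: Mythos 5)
Your argument follows the paper's proof almost step for step: Chow's lemma to pass to an irreducible projective model $\tilde X$, a hyperplane section $\tilde L$, the candidate $H=\pi(\tilde L)$, the uniform fiber bound $\dim(g^{-1}(y))\geq \dim(\tilde X)-\dim(Y)\geq 1$ from Proposition~\ref{p:dim-fiber}.(ii), and Corollary~\ref{c:proj-dim} to conclude that $\tilde L$ meets every fiber over a closed point, so that $Y_0\subset h(H)$. (You are in fact slightly more careful than the paper in justifying that Proposition~\ref{p:dim-fiber}.(ii) applies, by first producing a closed point of $\tilde X$ in the fiber.)

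The one place you genuinely diverge is the final step, and it is the only point that needs attention. You pass from $Y_0\subset h(H)$ to $h(H)=Y$ by asserting that $h$ is proper because $X$ is proper over $K$, hence that $h(H)$ is closed and therefore equal to $Y$ by density of $Y_0$. But properness of $h\colon X\to Y$ (for $X$ proper over $K$) requires $Y$ to be \emph{separated} over $K$, and the paper's notion of $K$-algebraic variety is merely a scheme of finite type over $K$, with no separatedness hypothesis on $Y$ in the statement of the lemma. The paper sidesteps this by a softer argument: $h(H)$ is constructible by Chevalley's theorem, so $Y\setminus h(H)$ is constructible, hence Jacobson by Proposition~\ref{p:dim-alg-var}.(iv); since it contains no closed point of $Y$ (by $Y_0\subset h(H)$ and Proposition~\ref{p:construct-inJacob}.(ii)), it must be empty. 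You should either substitute that argument for your closedness claim, or add the hypothesis that $Y$ is separated (which does hold in the lemma's actual application in Proposition~\ref{p:pre-injectivity-*}, where $Y$ is a closed subscheme of $X^{\Omega^+}$). Everything else in your proof is sound.
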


\begin{proof}
Since $X$ is irreducible and complete over $K$, it follows from Chow's lemma (cf. 
Theorem~\ref{t:chow-lemma}) 
that there exist an irreducible projective $K$-algebraic variety $\tilde{X}$ with $\dim(\tilde{X})=\dim(X)$ 
and a surjective morphism $f \colon \tilde{X} \to X$ of $K$-schemes. 
Let $\tilde{H}$ be a hyperplane section of the projective variety $\tilde{X}$ 
(cf. Corollary \ref{c:proj-dim} and Remark~\ref{rem:section-exist}).  
Let $H\coloneqq f(\tilde{H})\subset X$. 
As $f$ is a morphism between proper schemes, it is proper and hence closed. 
Thus, $H$ is a closed subset of $X$. 
Since $\tilde{H}\subsetneq X$ is a proper closed subset and $\tilde{X}$ is irreducible, we have 
$\dim(\tilde{H}) < \dim(\tilde{X})$. 
We deduce from Proposition \ref{p:dim-image}.(iii) that 
\[
\dim(X)=\dim(\tilde{X}) > \dim(\tilde{H})\geq  \dim(f(\tilde{H}))= \dim(H).
\]
It follows that $H$ is a proper closed subset of $X$. 
\par
Now let $Y$ and $h \colon X\twoheadrightarrow Y$ be as in the statement of the lemma. 
As $X$ is irreducible, $Y=h(X)$ is also irreducible. 
Consider the surjective composite morphism 
\[
g\coloneqq h \circ f \colon \tilde{X} \to Y.
\] 
By Assertion~(ii) of Proposition~\ref{p:dim-fiber} applied to $g \colon \tilde{X} \to Y$, 
for every closed point $y\in Y$, 
the closed subset $g^{-1}(y)\subset \tilde{X}$ satisfies 
\[
\dim(g^{-1}(y)) \geq \dim(\tilde{X})-\dim(Y)= \dim(X)-\dim(Y)\geq 1.
\]
Hence, we deduce from Corollary~\ref{c:proj-dim} that the closed subset 
$\tilde{H} \cap g^{-1}(y)$ is nonempty for every closed point $y \in Y$. 
Therefore, $f(\tilde{H})$ contains the set of closed points $Y_0$ of $Y$. 
As $g=h\circ f$ and $H=f(\tilde{H})$, it follows that 
\begin{equation}
\label{e:chow}
h(H)=h(f(\tilde{H}))=g(\tilde{H}) \supset Y_0.
\end{equation}
Since $h(H)$ is constructible in $Y$ by Chevalley's theorem, $Y\setminus h(H)$ is also constructible and hence Jacobson (cf. Proposition~\ref{p:dim-alg-var}.(iv)). 
On the other hand,   $Y\setminus h(H)$ does not contain any closed point of $Y$
by \eqref{e:chow}. 
We then deduce from  Proposition~\ref{p:dim-alg-var}.(iv)
that the Jacobson space $Y\setminus h(H)$ has no closed points. 
It follows that $Y\setminus h(H)$ is empty. 
This shows that $h(H)=Y$. 
\end{proof}

\begin{proposition}
\label{p:pre-injectivity-*}
Let $G$ be a group and let $X$ be an algebraic variety over an algebraically closed field $K$. 
Let $A\coloneqq X(K)$ and let $\tau \colon A^G \to A^G$ be an algebraic cellular automaton 
over $(G,X,K)$. 
Then the following hold:
\begin{enumerate}[\rm (i)]
\item
if $\tau$ is pre-injective, then $\tau$  is both $(*)$-pre-injective and $(**)$-pre-injective; 
\item
if $X$ is irreducible and $\tau$ is $(**)$-pre-injective, then $\tau$ is $(*)$-pre-injective; 
\item
if $X$ is irreducible and complete over $K$,  then $\tau$ is $(*)$-pre-injective if and only if 
it is $(**)$-pre-injective.  
\end{enumerate}
\end{proposition}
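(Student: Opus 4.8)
The plan is to translate each pre-injectivity notion into a statement about the $K$-morphisms induced by $\tau$ on finite windows (via Lemma~\ref{l:local-map}) and then read off the conclusions from the dimension theory of the previous sections. Throughout I would fix a memory set $M$ with $1_G\in M=M^{-1}$ and, for a finite $\Omega\subset G$ and $p\in A^{G\setminus\Omega}$, work with the $K$-morphism $\phi_p\colon X^\Omega\to X^{\Omega^+}$ obtained by applying Lemma~\ref{l:local-map} to $\Omega^+$ and restricting $f^+_{\Omega^+}\colon X^{\Omega^{++}}\to X^{\Omega^+}$ to the fibre $X^\Omega\times\{q'\}$ fixing $q'\coloneqq p\vert_{\Omega^{++}\setminus\Omega}$; on $K$-points this is $v\mapsto(\tau(v_p))\vert_{\Omega^+}$. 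Since two configurations agreeing with $p$ off $\Omega$ have $\tau$-images agreeing off $\Omega^+$, the finite-support set $\tau((A^\Omega)_p)$ is carried by $\Omega^+$ and its Krull dimension equals $\dim(\overline{\phi_p(A^\Omega)})=\dim(\phi_p(A^\Omega))$.

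For (i) I would argue each case by contraposition, and I would note that no irreducibility of $X$ is needed here. If $\tau$ is not $(*)$-pre-injective, I take the witnessing $\Omega$ and proper closed $H\subsetneq A^\Omega$, choose $u\in A^\Omega\setminus H$ and any $p$; then $\tau((A^\Omega)_p)=\tau(H_p)$ furnishes $c'\in H_p$ with $\tau(u_p)=\tau(c')$, and $u_p\neq c'$ are almost equal, so $\tau$ fails pre-injectivity. If $\tau$ is not $(**)$-pre-injective, I take the witnessing $\Omega$ and any $p$; then $\dim(\phi_p(A^\Omega))<\dim(A^\Omega)=\dim(X^\Omega)$, so Proposition~\ref{p:dim-fiber}.(i) applied to $\phi_p\colon X^\Omega\to\overline{\phi_p(X^\Omega)}$ yields a closed point whose fibre has dimension $\geq 1$, hence at least two distinct closed points $v_1\neq v_2$ with $\phi_p(v_1)=\phi_p(v_2)$; the configurations $(v_1)_p\neq(v_2)_p$ are then almost equal and have equal $\tau$-images (they agree off $\Omega^+$ by construction and on $\Omega^+$ because $\phi_p(v_1)=\phi_p(v_2)$), so again pre-injectivity fails.

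For (ii) I would once more use contraposition: assuming $\tau$ is not $(*)$-pre-injective with witness $\Omega$ and proper closed $H\subsetneq A^\Omega$, irreducibility of $X$ over the algebraically closed $K$ makes $A^\Omega=(X^\Omega)_0$ irreducible (Proposition~\ref{p:prod-alg-var-alg-closed}.(iv) and Corollary~\ref{c:jacob-irred}), whence $\dim(H)<\dim(A^\Omega)$ by Proposition~\ref{p:dim-subsets}.(ii). For every $p$, combining $\tau((A^\Omega)_p)=\tau(H_p)$ with $\dim(\phi_p(H))\leq\dim(H)$ (Proposition~\ref{p:dim-image}.(vi)) gives $\dim(\tau((A^\Omega)_p))=\dim(\phi_p(H))\leq\dim(H)<\dim(A^\Omega)$, so $\Omega$ witnesses failure of $(**)$-pre-injectivity.

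For (iii) the implication $(**)\Rightarrow(*)$ is already (ii), which uses only irreducibility, so I would only need $(*)\Rightarrow(**)$, equivalently not-$(**)\Rightarrow$ not-$(*)$; this is where completeness and Lemma~\ref{l:chow-**} enter and is the main obstacle. Assuming $\tau$ is not $(**)$-pre-injective with witness $\Omega$, I have $\dim(\phi_p(X^\Omega))<\dim(X^\Omega)$ for every $p$. As $X$ is irreducible and complete, so is $X^\Omega$ (Proposition~\ref{p:prod-alg-var-alg-closed}.(iv) together with properness of fibred products of proper schemes), so I would apply Lemma~\ref{l:chow-**} to $X^\Omega$ to extract a single proper closed $\tilde H\subsetneq X^\Omega$ satisfying property (P). For each $p$, completeness makes $\phi_p(X^\Omega)$ a closed reduced $K$-variety $Y_p$ of dimension $<\dim(X^\Omega)$ onto which $\phi_p$ is surjective, so (P) gives $\phi_p(\tilde H)=Y_p=\phi_p(X^\Omega)$; passing to closed points (Proposition~\ref{p:dim-image}.(ii)) yields $\phi_p(H_0)=\phi_p(A^\Omega)$ with $H_0\coloneqq\tilde H\cap A^\Omega$ proper closed in $A^\Omega$ (properness because $X^\Omega$ is Jacobson). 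Since the fixed part of $\tau((A^\Omega)_p)$ depends only on $p$, this gives $\tau((H_0)_p)=\tau((A^\Omega)_p)$ for all $p$, so $\Omega,H_0$ witness failure of $(*)$-pre-injectivity. The crux, and the only place the hypotheses are genuinely used, is that one $\tilde H$ produced once by Chow's lemma surjects simultaneously under every boundary-dependent morphism $\phi_p$; arranging the rest of the argument to feed into Lemma~\ref{l:chow-**} is the heart of the proof.
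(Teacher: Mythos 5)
Your proposal is correct and follows essentially the same route as the paper: the morphism $\phi_p$ is the paper's $h=f^+_{\Omega^+}\circ j$ restricted to the fibre over $p\vert_{\Omega^{++}\setminus\Omega}$, part (ii) is the same dimension count via Proposition~\ref{p:dim-subsets}.(ii) and Proposition~\ref{p:dim-image}, part (i) uses Proposition~\ref{p:dim-fiber}.(i) exactly as the paper does (you extract two closed points of a positive-dimensional fibre where the paper derives a contradiction with injectivity of $h$ — the same argument in contrapositive form), and part (iii) hinges, as in the paper, on the single $p$-independent hyperplane-section set produced by Lemma~\ref{l:chow-**} surjecting onto every $Y_p$.
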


\begin{proof}
Suppose first that $X$ is irreducible and that $\tau$ is not $(*)$-pre-injective, i.e., 
there exists a finite subset $\Omega \subset G$ and a closed  subset $H\subsetneq A^\Omega$ such that
\begin{equation}
\label{e:tau-not-*} 
 \tau((A^\Omega)_p)=\tau(H_p)  \quad \text{ for all } p \in A^{G\setminus \Omega}. 
\end{equation}
Since $K$ is algebraically closed, we deduce from Proposition~\ref{p:prod-alg-var-alg-closed}.(iv) 
that $X^\Omega$ and hence $A^\Omega$ are irreducible.  
Thus, it follows from~\eqref{e:tau-not-*}  that 
\begin{align*}
 \dim(\tau((A^\Omega)_p)) &= \dim(\tau(H_p)) \\ 
 & \leq \dim(H_p)= \dim(H)   && \text{(by Proposition~\ref{p:dim-image}.(iii))}\\
 & <\dim(A^\Omega)  && \text{(by Proposition~\ref{p:dim-subsets}.(ii))}
\end{align*}
for all $p \in A^{G\setminus \Omega}$.
Therefore $\tau$ is not  $(**)$-pre-injective. This proves (ii). 
\par 
Let $M$ be a memory set of $\tau$ and $f\colon X^M \to X$ a $K$-scheme morphism 
 such that  $f^{(K)} \colon A^M \to A$ is the local defining map associated with $M$.
After  enlarging $M$ if necessary, we can assume $1_G \in M$ and $M = M^{-1}$. 
We use again the notation introduced in Subsection~\ref{ss:interiors} and write 
$\Omega^{++} \coloneqq (\Omega^+)^+$.
Note that $\Omega \subset \Omega^+ \subset \Omega^{++}$ since $1_G \in M$. 
\par
For the proof of (i) and (iii), we shall use the following construction. 
We suppose that $\tau$ is not $(**)$-pre-injective, i.e., 
there exists a finite subset $\Omega \subset G$ such that 
\begin{equation}
\label{e:**}
 \dim(\tau((A^\Omega)_p)) <\dim(A^\Omega) \quad \text{ for all } p \in A^{G\setminus \Omega}. 
\end{equation}
Let $p \in A^{G\setminus \Omega}$ with $\Omega$ as above and a configuration $c\in (A^\Omega)_p$ extending $p$. 
Observe that $\tau(c)\vert_{G\setminus \Omega^+}$ only depends on $p$
(here we use the fact that $gM \subset G \setminus \Omega$ for all $g \in G\setminus \Omega^+$ since $M^{-1}  = M$). 
\par
Consider the following closed immersion induced by $p$:
\[
\iota \coloneqq (\Id_{X^\Omega}, p\vert_{\Omega^{++} \setminus \Omega}) \colon X^\Omega 
= X^\Omega\times_K \prod_{\Omega^{++}\setminus \Omega} \Spec(K) \to X^{\Omega^{++}}. 
\]
Let $Z \coloneqq \iota(X^\Omega) \subset X^{\Omega^{++}}$ be the closed image of $\iota$  
equipped with the reduced closed subscheme structure. 
Let $j\colon Z \to X^{\Omega^{++}}$ be the corresponding closed immersion. 
Since we can assume that $X$ is reduced by Remark~\ref{r:reduce}, it follows from \cite[Proposition~I.5.2.2]{grothendieck-ega-1} that $\iota$ factors through a morphism $\gamma \colon X^\Omega \to Z$. 
Note that $Z$ is homeomorphic to $X^\Omega$. 
Note also that for any subset $\Gamma \subset A^\Omega$, we have 
\begin{equation}
\label{e:gamma}
\gamma(\Gamma)=\Gamma  \times \{p\vert_{\Omega^{++}\setminus \Omega} \} = \Gamma_p\vert_{\Omega^{++}\setminus \Omega}.
\end{equation}
We consider now the  $K$-scheme morphism 
\[
h\coloneqq  f^+_{\Omega^+} \circ j  \colon Z \to X^{\Omega^+},
\] 
where $f^+_{\Omega^+} \colon X^{\Omega^{++}} \to X^{\Omega^+}$ is defined as in Lemma~\ref{l:local-map}. 
Clearly 
\begin{equation}
\label{e:sigma-definition}
\sigma \coloneqq h^{(K)} \colon Z(K) \to A^{\Omega^+}
\end{equation}
is the restriction of  
$\tau_{\Omega^+}^+$ to $Z(K)=A^\Omega \times \{p\vert_{\Omega^{++}\setminus \Omega} \}$. 
\par
Let $Y \coloneqq \overline{\im(h)} \subset X^{\Omega^+}$ be the closure of $\im(h)$ in $X^{\Omega^+}$. 
We equip $Y$ with the induced reduced closed subscheme structure over $K$. 
By \cite[Proposition~I.5.2.2]{grothendieck-ega-1}, the morphism 
$h$ factors through a $K$-scheme morphism
\[
k \colon Z \to Y.
\] 
Observe that $\im(h)$ is constructible in $X^{\Omega^+}$ by Chevalley's theorem. 
We then have the identifications $\sigma (Z(K))=h(Z_0)=\im(h)_0$ 
by Proposition \ref{p:dim-image}.(ii) and Remark \ref{rem:closed-points-rat-points}. 
From Proposition \ref{p:dim-alg-var}.(vi), 
\begin{align*}
\dim(\sigma(Z(K))) & =\dim(\im(h)_0)=\dim(\im(h)) \\
&=\dim( \overline{\im(h)})=\dim(Y).
\end{align*}
Thus, we deduce from Inequality~\eqref{e:**} and the above equalities that  
\begin{align}
\label{e:*-and-**}
\dim(Z) & = \dim(X^\Omega)=\dim(A^\Omega) \\
& >  \dim(\tau((A^\Omega)_p)) = \dim(\tau^+_{\Omega^+}(A^\Omega \times \{p\vert_{\Omega^{++}\setminus \Omega} \})) \nonumber\\
& =\dim(\sigma(Z(K)))=\dim(Y). \nonumber 
\end{align}
\par
In order to show (i), suppose that $\tau$ is pre-injective but not $(**)$-pre-injective. 
Let $\Omega \subset G$, $p\in A^{G \setminus \Omega}$ and the maps $h,k$ be constructed as above. 
Since $\tau$ is pre-injective, the map $\sigma=h^{(K)}$ (cf. \eqref{e:sigma-definition}) is injective. 
As $K$ is algebraically closed, we can identify closed points of $Z$ and $Y$ with $Z(K)$ and $Y(K)$ respectively. 
We deduce (from \cite[Lemma~3.6.(iii)]{cscp-alg-ca} for example) that $h$ and thus $k$ are injective. 
Proposition \ref{p:dim-fiber}.(i) applied to $k \colon Z \to Y$ shows that there exists 
a closed point $b\in Y_0 \subset X^{\Omega^+}$ such that 
\begin{equation*}
\dim(h^{-1}(b))=\dim(k^{-1}(b))\geq \dim(Z)-\dim(Y) \geq 1,
\end{equation*}
where the last inequality follows from \eqref{e:*-and-**}. 
This is a contradiction since $h^{-1}(b)=k^{-1}(b)$ is Jacobson and has at most one closed point by the injectivity of $h$ and $k$. 
This proves that when $\tau$ is pre-injective, it must be $(**)$-pre-injective. 
Since pre-injectivity implies trivially $(*)$-pre-injectivity by the definition, the point (i) is proved.  
\par
We proceed now to the proof of (iii). 
Suppose that $X$ is irreducible and complete over $K$ and that $\tau$ is not $(**)$-pre-injective. 
Let $\Omega \subset G$, $p\in A^{G \setminus \Omega}$, $c\in (A^\Omega)_p$ and the maps $h,k, \gamma$ be as above. 
Observe again that $X^\Omega$ is irreducible by Proposition \ref{p:prod-alg-var-alg-closed}.(iv). 
As $X$ is proper, the varieties $X^\Omega$, $X^{\Omega^+}$ and $Z$ are also proper over $K$. 
Hence $h\colon Z \to X^{\Omega^+}$ is a morphism of proper $K$-schemes. 
Consequently, $h$ is closed, $Y=\im(h)$ and thus $k\colon Z \to Y$ is surjective. 
\par
Since $X^\Omega$ is irreducible and complete, Lemma \ref{l:chow-**} shows that 
there exists a proper closed subset $L\subsetneq X^\Omega$ independent of $p$ satisfying:
\begin{enumerate}[{\rm (P)}]
\item
if $V$ is a $K$-algebraic variety with $\dim(V) < \dim(X^\Omega)$ and $\Phi \colon X^\Omega \twoheadrightarrow V$ is a surjective $K$-scheme morphism, then one has $\Phi(L)=V$. 
 \end{enumerate} 
We consider the set of $K$-points of $L$:
\[
H\coloneqq L(K) \subsetneq A^\Omega.
\] 
Applying Property $(P)$ to the surjective morphism 
$k \circ \gamma \colon X^\Omega \twoheadrightarrow Y$, 
we deduce that $k(\gamma(L))=Y$. 
Since a surjective morphism between $K$-algebraic varieties induces a surjective map 
between their sets of closed points (cf.~\cite[Lemma~2.22.(iv)]{cscp-alg-ca}), 
we deduce that 
\begin{equation}
\label{e:h-**}
h(H\times \{p\vert_{\Omega^{++}\setminus \Omega} \})=k((\gamma(H))=Y(K)=h(A^\Omega\times \{p\vert_{\Omega^{++}\setminus \Omega} \}). 
\end{equation}
Now let $d=h(c\vert_{\Omega^{++}})=\tau(c)\vert_{\Omega^+} \in A^{\Omega^+}$ be identified with a closed point in $Y_0=Y(K)$. 
By \eqref{e:h-**}, there exists $z \in H=L(K)$ such that $h(z\times \{p\vert_{\Omega^{++}\setminus \Omega} \})=d$. 
Let $c' \in H_p \subset A^G $ be a configuration such that $c'\vert_\Omega=z$ and $c'\vert_{G \setminus \Omega}=p$. 
Then we see that 
\[
h(c'\vert_{\Omega^{++}})=h(c\vert_{\Omega^{++}})=d.
\]
Hence, $\tau(c')\vert_{\Omega^+}=\tau(c)\vert_{\Omega^+}=d$. 
As $c'\vert_{G \setminus \Omega}=c\vert_{G \setminus \Omega}=p$, we have $\tau(c')\vert_{G\setminus \Omega^+}=\tau(c)\vert_{G\setminus \Omega^+}$. 
Thus, we find that $\tau(c')=\tau(c)$.   
As $p \in A^{G \setminus \Omega}$ and $c\in (A^\Omega)_p$ are arbitrary and $c'\in H_p$, 
we deduce that 
\[
\tau((A^\Omega)_p)=\tau(H_p) \quad \text{for all } p\in A^{G \setminus \Omega}. 
\]
Therefore, $\tau$ is not $(*)$-pre-injective. 
Hence, $(*)$-pre-injectivity implies $(**)$-pre-injectivity when $X$ is irreducible and complete over $K$. 
Together with (ii), this completes the proof of (iii). 
\end{proof}

\begin{proposition}
\label{p:**-implies-mdim}
Let $G$ be an amenable group and let $\FF=(F_i)_{i \in I}$ be a F\o lner net for $G$.  
Let $X$ be an algebraic variety over an algebraically closed field $K$ and 
let $A \coloneqq X(K)$. 
Let  $\tau \colon A^G \to A^G$ be an algebraic  cellular automaton over $(G,X,K)$. 
Suppose that $\tau$ is $(**)$-pre-injective.
Then one has 
\begin{equation}
\label{e:mdim-max}
\mdim_\FF(\tau(A^G)) = \dim(X). 
\end{equation}
\end{proposition}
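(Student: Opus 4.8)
The plan is to combine the trivial upper bound with a matching lower bound. Since Proposition~\ref{p:prop-faciles-mdim}.(iii) already gives $\mdim_\FF(\tau(A^G)) \le \dim(X)$, it suffices to prove the reverse inequality $\mdim_\FF(\tau(A^G)) \ge \dim(X)$. First I would fix a memory set $M$ of $\tau$ and, by Remark~\ref{rem:independent-memory}, assume $1_G \in M$, so that $\Omega^- \subset \Omega \subset \Omega^+$ for every finite $\Omega \subset G$; write $\Phi \coloneqq \tau(A^G)$. Note that $\Phi_\Omega$ is constructible in $A^\Omega$ for every finite $\Omega$ by Proposition~\ref{p:set-config-zariski}.(i) applied to $\Gamma = A^G$ (whose traces $A^\Omega$ are constructible), so all the Krull dimensions below are well behaved.

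The heart of the argument is to exploit $(**)$-pre-injectivity along the F\o lner net. Unravelling Definition~\ref{def:*-**}, the hypothesis that $\tau$ is $(**)$-pre-injective means that for every finite $\Omega \subset G$ there is some $p \in A^{G \setminus \Omega}$ with $\dim(\tau((A^\Omega)_p)) = \dim(A^\Omega) = |\Omega|\dim(X)$ (here one uses that the reverse inequality always holds by Proposition~\ref{p:dim-image}.(iii), together with \eqref{e:kdim-a-to-omeg}). Applying this with $\Omega = F_i$, I obtain $p_i \in A^{G \setminus F_i}$ such that the finite-support set $\tau((A^{F_i})_{p_i})$, which by the discussion preceding Definition~\ref{def:*-**} equals $(R_i)_{s_i}$ for some $R_i \subset A^{F_i^+}$ and $s_i \in A^{G \setminus F_i^+}$, satisfies $\dim(R_i) = |F_i|\dim(X)$. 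Since $\tau((A^{F_i})_{p_i}) \subset \Phi$ and $R_i$ is precisely the projection of $(R_i)_{s_i}$ to $F_i^+$, I get $R_i \subset \Phi_{F_i^+}$ and hence
\[
\dim(\Phi_{F_i^+}) \ge \dim(R_i) = |F_i|\dim(X).
\]

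Finally I would discard the boundary. From the inclusion $\Phi_{F_i^+} \subset \Phi_{F_i} \times A^{F_i^+ \setminus F_i}$ together with Proposition~\ref{p:dim-subsets}.(i) and Proposition~\ref{p:prod-alg-var-alg-closed}.(iii) one has $\dim(\Phi_{F_i^+}) \le \dim(\Phi_{F_i}) + |F_i^+ \setminus F_i|\dim(X)$, so the previous display yields
\[
\dim(\Phi_{F_i}) \ge \bigl(|F_i| - |F_i^+ \setminus F_i|\bigr)\dim(X).
\]
Dividing by $|F_i|$ and using $F_i^+ \setminus F_i \subset \partial F_i$ together with \eqref{e:boundary-folner}, the correction term $|F_i^+ \setminus F_i|/|F_i|$ tends to $0$, whence $\limsup_i \dim(\Phi_{F_i})/|F_i| \ge \dim(X)$, that is, $\mdim_\FF(\Phi) \ge \dim(X)$. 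Combined with the upper bound, this proves \eqref{e:mdim-max}. The only genuinely delicate point is the second paragraph: one must correctly read $(**)$-pre-injectivity as producing, for each $F_i$, a full-dimensional finite-support image that embeds into the projection $\Phi_{F_i^+}$ of the whole image $\Phi$; the rest is the standard amenability estimate absorbing the $M$-boundary of $F_i$.
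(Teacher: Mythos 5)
Your proof is correct and is essentially the paper's argument run in the contrapositive direction: the paper assumes $\mdim_\FF(\tau(A^G))<\dim(X)$ and derives a violation of $(**)$-pre-injectivity from the same two ingredients you use, namely the inclusion of the full-dimensional finite-support image $\tau((A^{F_i})_{p_i})$ into $(\tau(A^G))_{F_i^+}$ and the boundary estimate $\dim(\Phi_{F_i^+})\le\dim(\Phi_{F_i})+|F_i^+\setminus F_i|\dim(X)$. All the supporting citations (constructibility via Proposition~\ref{p:set-config-zariski}.(i), the product dimension formula, and \eqref{e:boundary-folner}) are deployed exactly as in the paper, so there is nothing to fix.
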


\begin{proof}
We proceed by contradiction.
Suppose that~\eqref{e:mdim-max} is not satisfied, i.e.,
\begin{equation}
\label{e:mdim-notmax}
\mdim_\FF(\tau(A^G)) < \dim(X). 
\end{equation}
Let $M$ be a memory set for $\tau$ such that $1_G \in M$. 
Let  $\Gamma \coloneqq  \tau(A^G)$. 
As $\Gamma_{F_i^+}$ is a subset of $\Gamma_{F_i}\times
A^{F_i^+ \setminus F_i}$ and
$\Gamma_{F_i}$ is a constructible subset of $A^{F_i}$ 
(by Assertion~(i) of Proposition~\ref{p:set-config-zariski}), we have that
\begin{align*}
\dim(\Gamma_{F_i^+})
&\leq \dim(\Gamma_{F_i}\times
A^{F_i^+ \setminus F_i}) 
&& \text{(by Proposition~\ref{p:dim-subsets}.(i))}\\ 
&= \dim(\Gamma_{F_i}) 
+  \dim(A^{F_i^+ \setminus F_i})
&& \text{(by Proposition~\ref{p:prod-alg-var-alg-closed}.(iii))}\\ 
&= \dim(\Gamma_{F_i}) 
+ | F_i^+ \setminus F_i | \dim(X) 
&&\text{(by \eqref{e:kdim-a-to-omeg})} \\  
&\leq \dim(\Gamma_{F_i}) + | \partial(F_i) | \dim(X)
&& \text{(since $F_i^+ \setminus F_i \subset \partial F_i$)}.
\end{align*}
Hence, we find that 
\[
\frac{\dim(\Gamma_{F_i^+})}{\vert F_i\vert} \leq \frac{\dim(\Gamma_{F_i})}{\vert F_i\vert} + \frac{\vert \partial(F_i) \vert}{\vert F_i\vert} \dim(X).
\]
The above inequality together with~\eqref{e:mdim-notmax} and~\eqref{e:boundary-folner} show that there exists $i_0 \in  I$ such that
\begin{equation}
\label{e:dimnotmax}
\dim(\Gamma_{F_{i_0}^+}) <  \vert F_{i_0} \vert \dim(X).
\end{equation} 
Observe now that for all $p \in A^{G\setminus F_{i_0}}$, we have that
 $\tau((A^{F_{i_0}})_p)\subset \tau(A^G) \cap  (A^{F_{i_0}^+})_q$ where 
 $q=\tau(\tilde{p})\vert_{G\setminus F_{i_0}^+} \in A^{G\setminus F_{i_0}^+}$ and 
 $\tilde{p}\in A^G$ is an arbitrary configuration that extends $p$. 
 Therefore, we have for all $p\in A^{G\setminus F_{i_0}}$ that 
\begin{align*}
\label{a:1}
\dim(\tau((A^{F_{i_0}})_p)) &\leq \dim(\tau(A^G)\vert_{F_{i_0}^+}) = \dim(\Gamma_{F_{i_0}^+})  & \\
& < \vert F_{i_0} \vert \dim(A)= \dim(A^{F_{i_0}}) & (\text{by } \eqref{e:dimnotmax}). 
\end{align*}
We can thus conclude that $\tau$ is not $(**)$-pre-injective.  
\end{proof}

As described by the next proposition, the converse of Proposition \ref{p:**-implies-mdim} also holds if we replace $(**)$-pre-injectivity by $(*)$-pre-injectivity.  

\begin{proposition}
\label{p:mdim-implies-*-pre}
Let $G$ be an amenable group and let $\FF$ be a F\o lner net for $G$.  
Let $X$ be an algebraic variety over an algebraically closed field $K$. 
Let $A \coloneqq X(K)$ and let $\tau \colon A^G \to A^G$ be an algebraic cellular automaton 
over $(G,X,K)$. 
Suppose that $X$ is irreducible and that 
\begin{equation}
\label{e;mdim-leq-dim-V}
\mdim_\FF(\tau(A^G)) = \dim(X).
\end{equation}
Then $\tau$ is $(*)$-pre-injective. 
\end{proposition}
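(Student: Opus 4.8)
The plan is to argue by contraposition: I assume that $\tau$ is \emph{not} $(*)$-pre-injective and deduce that $\mdim_\FF(\tau(A^G)) < \dim(X)$, contradicting~\eqref{e;mdim-leq-dim-V}. By definition, the failure of $(*)$-pre-injectivity provides a finite subset $\Omega \subset G$ and a proper Zariski-closed subset $H \subsetneq A^\Omega$ such that $\tau((A^\Omega)_p) = \tau(H_p)$ for every $p \in A^{G \setminus \Omega}$. Here $\Omega \neq \varnothing$ and $H \neq \varnothing$, since otherwise one side of this equality would be empty while the other is not. Translating the relation by an element $g \in G$ and using the $G$-equivariance of $\tau$, I obtain $\tau((A^{g\Omega})_q) = \tau((gH)_q)$ for all $q \in A^{G \setminus g\Omega}$, where $gH \subsetneq A^{g\Omega}$ denotes the translate of $H$; geometrically, this says that any configuration can be altered \emph{only} on $g\Omega$, keeping the rest fixed, so as to make its restriction to $g\Omega$ lie in $gH$ without changing $\tau$ anywhere.

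First I would fix an $(\Omega, \Omega')$-tiling $T \subset G$ with $\Omega' \coloneqq \Omega\Omega^{-1}$, which exists by Proposition~\ref{p:tilings-exist}, and set
\[
\Gamma \coloneqq \{ c \in A^G : c\vert_{g\Omega} \in gH \text{ for all } g \in T \}.
\]
The crucial step is to prove that $\tau(A^G) = \tau(\Gamma)$. The inclusion $\tau(\Gamma) \subset \tau(A^G)$ is trivial. For the reverse, given $c \in A^G$, I would correct $c$ tile by tile: because the tiles $g\Omega$ ($g \in T$) are pairwise disjoint (condition (T-1)), each correction modifies the configuration only inside a single tile and leaves $\tau$ globally unchanged, so the corrections on distinct tiles do not interfere with one another. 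Performing all of them — via a transfinite recursion on $T$, checking at limit stages that for each $x \in G$ the finite window $xM$ meets only finitely many tiles so that the value $\tau(\cdot)(x)$ has already stabilized — yields a configuration $c' \in \Gamma$ with $\tau(c') = \tau(c)$. This simultaneous-correction argument, and in particular the verification that the limit configuration still has image $\tau(c)$, is the main obstacle.

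Once $\tau(A^G) = \tau(\Gamma)$ is established, the conclusion follows by combining two earlier results. On the one hand, $\Gamma$ satisfies condition (C) of Lemma~\ref{l:inv-mdim-non-max} with the tiling $T$ and $E \coloneqq \Omega$, since $\Gamma_{g\Omega} \subset gH \subsetneq A^{g\Omega}$ is a proper closed subset for every $g \in T$ (here $A^{g\Omega}$ is irreducible by Proposition~\ref{p:prod-alg-var-alg-closed}.(iv) and Corollary~\ref{c:jacob-irred}); hence $\mdim_\FF(\Gamma) < \dim(X)$. On the other hand, $\Gamma$ meets the constructibility hypothesis of Proposition~\ref{p:aut-decroit-mdim}: for a finite $F \subset G$, only finitely many tiles meet $F$, and the restriction $\Gamma_F$ is cut out by the closed conditions coming from tiles contained in $F$, together with the constraints $u\vert_{g\Omega \cap F} \in \pi_{g\Omega \cap F}(gH)$ for tiles overlapping the boundary of $F$, each of which is constructible by Chevalley's theorem (Proposition~\ref{p:dim-image}.(i)); thus $\Gamma_F$ is constructible.

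Therefore Proposition~\ref{p:aut-decroit-mdim} gives
\[
\mdim_\FF(\tau(A^G)) = \mdim_\FF(\tau(\Gamma)) \leq \mdim_\FF(\Gamma) < \dim(X),
\]
which is the desired contradiction. Hence $\tau$ must be $(*)$-pre-injective.
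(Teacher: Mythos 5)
Your proposal is correct and follows essentially the same route as the paper: contraposition, tiling the group, replacing each tile's restriction by an element of the proper closed subset $H$ to form $\Gamma$ with $\tau(A^G)=\tau(\Gamma)$, and then combining Proposition~\ref{p:aut-decroit-mdim} with Lemma~\ref{l:inv-mdim-non-max}. The only cosmetic difference is that you organize the simultaneous tile-by-tile correction as a transfinite recursion over a well-ordering of $T$, whereas the paper runs the equivalent Zorn's-lemma argument on the partially ordered set of partially corrected configurations; the stabilization check you describe (each finite window $xM$ meets only finitely many tiles) is exactly the verification the paper performs for upper bounds of chains.
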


\begin{proof}
We proceed by contradiction. 
Suppose that the cellular automaton $\tau$ is not $(*)$-pre-injective. 
Thus, there exist a finite subset $E\subset G$ and a closed proper subset $H \subsetneq A^E$ such that \begin{equation}
\label{e:dim-hyperplan}
\tau((A^E)_p)=\tau(H_p) \quad \text{ for all } p \in A^{G\setminus E}.
\end{equation}
By Proposition~\ref{p:tilings-exist}, we can find a finite subset $E' \subset G$
such that $G$ contains an
$(E,E')$-tiling $T$. 
For every $t \in T$, we define $H_t \subset A^{tE}$ to be the image of $H$ under the 
canonical bijective map $A^E \to A^{t E}$ that is induced by the left-multiplication by $t^{-1}$. 
Since $\tau$ is $G$-equivariant, we deduce from \eqref{e:dim-hyperplan} that 
for each $t\in T$, we have that 
\begin{equation}
\label{e:dim-hyperplan-g}
\tau((A^{tE})_p)=\tau((H_t)_p) \quad \text{ for all } p \in A^{G\setminus tE}.
\end{equation}
\par
Consider the subset $\Gamma \subset A^G$ defined by
\[
\Gamma \coloneqq A^{G\setminus TE} \times \prod_{t \in T}H_t .
\]
We claim that $\tau(A^G) = \tau(\Gamma)$.  
Indeed, let $c \in A^G$ be any configuration
and let us show that there exists a configuration in $\Gamma$ whose image under $\tau$ is equal to 
$\tau(c)$. 
\par
To see this, consider the set $\Phi \subset A^G$ consisting of all configurations $d \in A^G$ satisfying the following conditions:
\begin{enumerate}[(C1)]
\item
$d\vert_{G \setminus TE} = c\vert_{G \setminus TE}$;
\item
if $t \in T$, then $d\vert_{tE} = c\vert_{tE}$ or $d\vert_{tE} \in H_t$;
\item
$\tau(d) = \tau(c)$.
\end{enumerate}
Given a configuration $d \in \Phi$, we define the subset $T_d \subset T$ by
\[
T_d \coloneqq  \{ t \in T : d\vert_{tE} \in H_t \}.
\]
We partially order $\Phi$ by the relation $\leq$ defined by
\[
d \leq e \iff \left( T_d \subset T_e \text{ and } d\vert_{tE} = e\vert_{tE} \text{  for all } t \in T_d \right).
\]
Let us check that  $\Phi$ satisfies the hypotheses of Zorn's lemma.
The set  $\Phi$ is not empty since $c \in \Phi$.
On the other hand, suppose  that $\Psi$ is a non-empty totally ordered subset of $\Phi$.
Let us show that $\Psi$ admits an upper bound in $\Phi$.
To see this, first observe that, for $t \in T$ fixed, the restrictions $d\vert_{tE}$, $d \in \Psi$, are eventually constant,
i.e., there exists $\lambda_t \in A^{tE}$ such that
$d\vert_{tE} = \lambda_t$ for all $d \in \Psi$ large enough (with respect to $\leq$).
Consider now the configuration $e \in A^G$ defined by
\[
e\vert_{G \setminus TE} = c\vert_{G \setminus TE} \text{   and }
e\vert_{tE} = \lambda_t \text{ for all } t \in T.
\]
It is clear that  $e$ satisfies (C1) and (C2).
If $\Omega$ is a finite subset of $G$, then there are only finitely many $g \in T$ such that $g E \subset \Omega$.
It follows that there exists $d \in \Psi$ such that $e\vert_\Omega = d\vert_\Omega$.
Taking $\Omega = g M$, where $g \in G$ and $M$ is a memory set of $\tau$,
we deduce that $\tau(e)(g) = \tau(d)(g) = \tau(c)(g)$ for all $g \in G$.
This shows that $e$ also satisfies (C3).
Thus, $e \in \Phi$ is an upper bound for $\Psi$.
By Zorn's lemma, $\Phi$ admits a maximal element $m$.
We have that $\tau(m) = \tau(c)$ since $m \in \Phi$ satisfies (C3).
We also have  that $m \in \Gamma$. 
Indeed, otherwise, there would be some $t \in T$ such that $m\vert_{t E} \notin H_t$. But then using~\eqref{e:dim-hyperplan-g}, we could modify $m$ on $t E$
and get an element $m' \geq  m$ in $\Phi$  with $T_{m'} = T_m \cup \{t\}$, contradicting the maximality of $m$.
This completes the proof that $\tau(A^G) = \tau(\Gamma)$.   
\par
We then get
\begin{align*}
\mdim_\FF(\tau(A^G)) &= \mdim_\FF(\tau(\Gamma)) \\
&\leq \mdim_\FF(\Gamma) && \text{(by Proposition~\ref{p:aut-decroit-mdim})} \\ 
&< \dim(X) && \text{(by Lemma~\ref{l:inv-mdim-non-max})} ,
\end{align*}
which contradicts~\eqref{e;mdim-leq-dim-V}.
Observe that the hypotheses of Proposition~\ref{p:aut-decroit-mdim} are satisfied since   
$\Gamma_\Omega$ is a closed and hence constructible subset of $A^\Omega$ for every finite subset $\Omega \subset G$. 
Note also that the hypotheses of Lemma~\ref{l:inv-mdim-non-max} are satisfied 
since $X$ is assumed to be irreducible and  $\Gamma_{tE}=H_t$   is a proper closed subset of $A^{tE}$ for all $t\in T$. 
\end{proof}

\section{Main results}
\label{sec:main-results}
 
 \begin{proof}[Proof of Theorem~\ref{t:main-theorem-3}] 
The result  follows from Proposition~\ref{p:prop-faciles-mdim}.(i) 
and Proposition~\ref{p:mdim-implies-*-pre}. 
\end{proof}

The following statement contains Theorem~\ref{t:main-theorem-1}
as well as Theorem~\ref{t:main-theorem-2}.

\begin{theorem}
\label{t:goe-alg}
Let $G$ be an amenable group. 
Let $X$ be an irreducible complete algebraic variety over an algebraically  closed field $K$ 
and let $A \coloneqq X(K)$. 
Suppose that  $\tau \colon A^G \to A^G$ is  an algebraic cellular automaton over $(G,X,K)$.
Then the following conditions are equivalent: 
\begin{enumerate}[\rm(a)]
\item
$\tau$ is surjective;
\item
$\tau$ is $(*)$-pre-injective;
\item
$\tau$ is $(**)$-pre-injective;
\item
for some (or equivalently any)  F\o lner net $\FF$  of the group $G$, one has $\mdim_\FF(\tau(A^G)) =\dim(X)$.
\end{enumerate}
Moreover, if $\tau$ is pre-injective then it is surjective. 
\end{theorem}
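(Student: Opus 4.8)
The plan is to prove Theorem~\ref{t:goe-alg} purely by assembling the implications established in Sections~\ref{sec:surjectivity} and~\ref{sec:weeak-pre-inj} into a single cycle through the four conditions; no new geometric input is required. Since $G$ is amenable, I would fix at the outset a F\o lner net $\FF$ for $G$, interpret condition~(d) relative to this net, and defer the phrase ``for some (or equivalently any)'' to the very end.

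First I would run the cycle $(a) \Rightarrow (d) \Rightarrow (b) \Rightarrow (c) \Rightarrow (d) \Rightarrow (a)$, which meets every condition. For $(a) \Rightarrow (d)$: if $\tau$ is surjective then $\tau(A^G) = A^G$, whence $\mdim_\FF(\tau(A^G)) = \mdim_\FF(A^G) = \dim(X)$ by Proposition~\ref{p:prop-faciles-mdim}.(i). For $(d) \Rightarrow (b)$: as $X$ is irreducible, Proposition~\ref{p:mdim-implies-*-pre} turns maximal algebraic mean dimension into $(*)$-pre-injectivity. For $(b) \Rightarrow (c)$: as $X$ is irreducible and complete, Proposition~\ref{p:pre-injectivity-*}.(iii) identifies $(*)$-pre-injectivity with $(**)$-pre-injectivity. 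For $(c) \Rightarrow (d)$: Proposition~\ref{p:**-implies-mdim} gives $\mdim_\FF(\tau(A^G)) = \dim(X)$ from $(**)$-pre-injectivity. For $(d) \Rightarrow (a)$: as $X$ is irreducible and complete, Theorem~\ref{t:sur-dim} deduces surjectivity from maximal algebraic mean dimension of the image. Reading off the cycle, $(a) \Leftrightarrow (d)$ and $(b) \Leftrightarrow (c) \Leftrightarrow (d)$, so the four conditions are equivalent.

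Two small points remain, and I expect neither to pose a genuine obstacle---the real difficulty lives upstream, in Theorem~\ref{t:sur-dim} and in Proposition~\ref{p:pre-injectivity-*}.(iii) (which ultimately rests on Chow's lemma through Lemma~\ref{l:chow-**}). For the phrase ``for some (or equivalently any)'' in~(d): conditions~(a), (b), (c) make no reference to a F\o lner net, so the equivalence just proved holds verbatim for each choice of $\FF$; hence $\mdim_\FF(\tau(A^G)) = \dim(X)$ for one net if and only if it holds for all nets, both being equivalent to surjectivity. For the concluding assertion, Proposition~\ref{p:pre-injectivity-*}.(i) gives that pre-injectivity implies $(**)$-pre-injectivity, that is condition~(c), which by the equivalence yields condition~(a). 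The only thing to watch throughout is that each arrow is invoked under exactly the hypotheses under which it was proved: irreducibility for $(d) \Rightarrow (b)$, and irreducibility together with completeness for $(b) \Rightarrow (c)$ and $(d) \Rightarrow (a)$, all guaranteed by the standing assumptions on $X$.
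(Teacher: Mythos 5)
Your proposal is correct and follows essentially the same route as the paper: the paper likewise establishes $(a)\Leftrightarrow(d)$ via Proposition~\ref{p:prop-faciles-mdim}.(i) and Theorem~\ref{t:sur-dim}, closes the cycle $(d)\Rightarrow(b)\Rightarrow(c)\Rightarrow(d)$ using Propositions~\ref{p:mdim-implies-*-pre}, \ref{p:pre-injectivity-*}, and \ref{p:**-implies-mdim}, and derives the final assertion from Proposition~\ref{p:pre-injectivity-*}.(i). The only cosmetic differences are that you invoke part~(iii) of Proposition~\ref{p:pre-injectivity-*} for $(b)\Rightarrow(c)$ where the paper cites part~(ii) (your citation is in fact the more accurate one, since part~(ii) gives the reverse direction), and that you spell out the ``for some/any F\o lner net'' point, which the paper leaves implicit.
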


\begin{proof}
The fact that (a) implies (d) follows from Proposition~\ref{p:prop-faciles-mdim}.(i) and the converse implication from Theorem~\ref{t:sur-dim}. 
Thus, (a) and (d) are equivalent.
We have that (d) implies (b) by Proposition~\ref{p:mdim-implies-*-pre},
and (b) implies (c) by Proposition~\ref{p:pre-injectivity-*}.(ii).
On the other hand, we have that (c) implies (d) by Proposition~\ref{p:**-implies-mdim}.
This shows that conditions (b), (c), and (d) are equivalent.
\par
Finally, the last assertion follows from the fact that pre-injectivity implies $(*)$-pre-injectivity by Proposition~\ref{p:pre-injectivity-*}.(i) and 
the implication (b) $\implies$ (a). 
\end{proof}

Let $G$ be a group et $M \subset G$ be finite subset. 
Let $X$ be an algebraic variety over an algebraically closed  field $K$ and 
let $f \colon X^M \to X$ be a $K$-scheme morphism. 
For each field extension $L/K$, let $X_L\coloneqq X \times_K \Spec(L)$ denote the $L$-algebraic variety obtained by the base change $\Spec(L) \to \Spec(K)$. 
Then we have $X_L(L)=X(L)$. 
We denote by $\tau^{(L)}\colon X(L)^G \to X(L)^G$ 
the algebraic cellular automaton over $(G,X_L, L)$ with memory set $M$ and associated local defining map $f^{(L)}$. 

\begin{theorem}
\label{t:base-change-surj}
With the above notation, suppose in addition that $G$ is amenable, $X$ is irreducible and complete,  and  
$L $ is algebraically closed. 
Then 
$\tau^{(K)}$ is surjective if and only if $\tau^{(L)}$ is surjective.  
\end{theorem}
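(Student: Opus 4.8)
The plan is to reduce everything to the mean-dimension criterion of Theorem~\ref{t:goe-alg} and then to show that the algebraic mean dimension of the image is insensitive to the extension $K \to L$ of the ground field. First I would check that the hypotheses transfer to $X_L$. Since $K$ is algebraically closed, $X$ irreducible is the same as geometrically irreducible, so $X_L = X \times_K \Spec(L)$ is again irreducible; properness is stable under base change, so $X_L$ is complete; and $\dim(X_L) = \dim(X)$, because the dimension of a finite-type scheme over a field is preserved under extension of the base field. Thus $\tau^{(L)}$ over $(G, X_L, L)$ satisfies the hypotheses of Theorem~\ref{t:goe-alg}, exactly as $\tau^{(K)}$ over $(G,X,K)$ does. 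Fixing a F\o lner net $\FF = (F_i)_{i \in I}$ for the amenable group $G$, that theorem tells us that surjectivity of $\tau^{(K)}$ (resp.\ $\tau^{(L)}$) is equivalent to $\mdim_\FF(\tau^{(K)}(X(K)^G)) = \dim(X)$ (resp.\ $\mdim_\FF(\tau^{(L)}(X(L)^G)) = \dim(X_L)$).

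Since $\dim(X) = \dim(X_L)$, it then suffices to prove that the two mean dimensions coincide, and for this I would establish the term-by-term equality $\dim((\tau^{(K)}(X(K)^G))_{F_i}) = \dim((\tau^{(L)}(X(L)^G))_{F_i})$ for every $i \in I$. Writing $g_\Omega \coloneqq f^+_\Omega \colon X^{\Omega^+} \to X^\Omega$ for the morphism of Lemma~\ref{l:local-map}, one has $(\tau^{(K)}(X(K)^G))_\Omega = g_\Omega(X^{\Omega^+}(K)) = g_\Omega((X^{\Omega^+})_0)$, which is a constructible subset of $(X^\Omega)_0$ whose dimension, by Proposition~\ref{p:dim-image}.(ii) and Proposition~\ref{p:dim-alg-var}.(vi), equals $\dim(\overline{g_\Omega(X^{\Omega^+})})$, the dimension of the scheme-theoretic image of $g_\Omega$. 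The same computation over $L$ gives $\dim((\tau^{(L)}(X(L)^G))_\Omega) = \dim(\overline{(g_\Omega)_L((X_L)^{\Omega^+})})$, where $(g_\Omega)_L$ is the base change of $g_\Omega$ to $L$. Here one uses that $(g_\Omega)_L = (f^{(L)})^+_\Omega$, since the construction of $f^+_\Omega$ from $f$ in Lemma~\ref{l:local-map} uses only fibered products, coordinate projections, and the permutation isomorphisms induced by left translations in $G$, all of which commute with base change, and $f^{(L)}$ is defined through $f_L = f \times_K \Spec(L)$.

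The technical heart, which I expect to be the main obstacle, is the identity $\dim(\overline{(g_\Omega)_L((X_L)^{\Omega^+})}) = \dim(\overline{g_\Omega(X^{\Omega^+})})$. I would derive it from two standard facts: the map $\Spec(L) \to \Spec(K)$ is faithfully flat, and $g_\Omega$ is quasi-compact (being a morphism of Noetherian schemes), so the formation of the scheme-theoretic image commutes with the flat base change $\Spec(L) \to \Spec(K)$; hence $\overline{(g_\Omega)_L((X_L)^{\Omega^+})}$ is the base change to $L$ of the closed subscheme $\overline{g_\Omega(X^{\Omega^+})}$ of $X^\Omega$. Applying once more the invariance of dimension under extension of the base field yields the desired equality.

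Putting these equalities together gives $\mdim_\FF(\tau^{(K)}(X(K)^G)) = \mdim_\FF(\tau^{(L)}(X(L)^G))$, and comparing with $\dim(X) = \dim(X_L)$ through Theorem~\ref{t:goe-alg} shows that $\tau^{(K)}$ is surjective if and only if $\tau^{(L)}$ is. The only delicate points to verify carefully are the commutation of $f^+_\Omega$ with base change and the flat-base-change behaviour of the scheme-theoretic image; everything else is bookkeeping with the dimension lemmas already established.
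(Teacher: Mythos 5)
Your proposal is correct and follows essentially the same route as the paper: both reduce surjectivity to the maximality of the algebraic mean dimension of the image (Theorem~\ref{t:sur-dim}/Theorem~\ref{t:goe-alg}) and then prove the term-by-term equality $\dim(\Gamma^{(K)}_{F_i}) = \dim(\Gamma^{(L)}_{F_i})$ via the compatibility of $f^+_\Omega$ with base change and the invariance of the dimension of the image under extension of the ground field. Your justification of that last step through the flat-base-change behaviour of the scheme-theoretic image is a correct and more detailed account of what the paper asserts in one line.
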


\begin{proof}
This follows from Theorem~\ref{t:sur-dim} and the invariance of dimension of algebraic varieties under 
base change of the ground field. 
Indeed, let $\Gamma^{(K)}\coloneqq \tau^{(K)}(X(K)^G)$ and $\Gamma^{(L)} \coloneqq \tau^{(L)}(X(L)^G)$.   
Let $\Omega \subset G$ be a finite subset. 
Then we have the identifications 
\begin{align*}
&\Gamma^{(K)}_\Omega=f^{+(K)}_\Omega(X(K)^{\Omega^+})= f^+_\Omega(X^{\Omega^+})_0\\
&\Gamma^{(L)}_\Omega=f^{+(L)}_\Omega(X(L)^{\Omega^+})= (f^+_\Omega\times \Id_L)(X_L^{\Omega^+})_0.
\end{align*}
Thus for all finite subset $\Omega \subset G$, we find that  
\[
\dim(\Gamma^{(K)}_\Omega)=\dim(f^+_\Omega(X^{\Omega^+}))=\dim((f^+_\Omega\times \Id_L)(X_L^{\Omega^+}))=\dim(\Gamma^{(L)}_\Omega). 
\]
Let $\FF=(F_i)_{i\in I}$ be a F\o lner net of $G$. 
Then by the definition of mean dimension, we have that
\[  
\mdim_\FF(\Gamma^{(K)}) \coloneqq  \limsup_{i \in I} \frac{\dim(\Gamma^{(K)}_{F_i})}{| F_i |}= \limsup_{i \in I} \frac{\dim(\Gamma^{(L)}_{F_i})}{| F_i |}\eqqcolon \mdim_\FF(\Gamma^{(L)}). 
\]
We can therefore conclude from Theorem~\ref{t:sur-dim} that $\tau^{(K)}$ is surjective if and only if $\tau^{(L)}$ is surjective. 
\end{proof}

\section{Counterexamples}
\label{sec:counterexamples}

In the following example, we shall see  that   
Theorem~\ref{t:main-theorem-2}, Theorem~\ref{t:sur-dim}, and  Proposition~\ref{p:pre-injectivity-*} become false 
if  we remove the hypothesis that $X$ is  irreducible, even if $X$ is assumed to be 
$0$-dimensional.   

\begin{example}
\label{ex:reducible} 
Let $G$ be a group and $K$  an algebraically closed field.
\par
Suppose that  $X$ is a $K$-algebraic variety with $\dim(X) = 0$.
Then $A \coloneqq  X(K)$ is a finite non-empty set.
Moreover, every map $A \to A$ is induced by some $K$-scheme morphism $X \to X$.
 Conversely, given a finite non-empty set $A$,
 there exists a $0$-dimensional $K$-algebraic variety $X$ such that $X(K) = A$.
 We can take for example the reduced $K$-algebraic variety $X$ obtained by taking the discrete union of a family indexed by $A$ of copies of $\Spec(K)$.
\par
Let $A$ be a finite non-empty set and $X$ a $0$-dimensional $K$-algebraic variety such that 
$X(K) = A$.  
Clearly  the cellular automata over the group $G$ and the alphabet $A$ are precisely the    algebraic cellular automata over $(G,X,K)$.
\par
Now let $\tau \colon A^G \to A^G$ be a 
cellular automaton over the group $G$ and the alphabet $A$. 
By the classical Garden of Eden theorem in~\cite{ceccherini}, 
the surjectivity of $\tau$ is equivalent to its pre-injectivity and is also equivalent to the fact that 
$\tau(A^ G)$ has maximal topological entropy.
Note that it immediately follows from the characterization of pre-injectivity by the absence of a  pair of distinct mutually erasable patterns
(see e.g. \cite[Proposition~5.5.2]{ca-and-groups-springer}) 
that $\tau$  is pre-injective if and only if it is $(*)$-pre-injective.
Observe  also that $\tau$ is always $(**)$-pre-injective. 
In the case when $G$ is amenable with a F\o lner net $\FF$ then $\tau$ satisfies $\mdim_\FF(\tau(A^G)) = \dim(X) = 0$
since
\[
\dim(A^\Omega)=0
\]
for every finite subset $\Omega \subset G$. 
The variety  $X$ is irreducible if and only if $A$ is a singleton.  
Otherwise, there exist cellular automata $\tau \colon A^G \to A^G$ that are not surjective (e.g., the map $\tau \colon A^G \to A^G$ defined by $\tau(c) = c_0$ for all $c \in A^G$, where $c_0 \in A^G$ is some constant configuration).
Such a cellular automaton is $(**)$-pre-injective but not $(*)$-pre-injective.  
\end{example}

The next example shows that we cannot  replace the hypothesis that $X$ is irreducible by the weaker hypothesis that it is connected in
 Theorem~\ref{t:main-theorem-2},    Theorem~\ref{t:sur-dim}, and Proposition~\ref{p:pre-injectivity-*}.  

\begin{example}
Let $G$ be an amenable group and let $\FF$ be a   F\o lner net for $G$.
Let $K$ be an algebraically closed field. 
Consider the projective curve $X$ in $\Proj^2_K$ defined by 
\[
X\coloneqq \text{Proj}(K[u,v,w]/(uv)) \subset \Proj^2_K.
\]
Then $X=L_u \cup L_v \subset \Proj^2_K$  
is the union of the two projective coordinate lines 
\[
L_u \coloneqq  \{u=0\}, \quad L_v \coloneqq \{v=0\} \, \subset \Proj^2_K.
\]
Since $X$ has two irreducible components $L_u$ and $L_v$, it is not irreducible.
However, $X$ is clearly connected. 
In the principal affine chart $\A^2_K=D_+(w)=\{w\neq 0\} \subset \Proj^2_K$, we see that $X$ is given by   
\[
Y=\Spec(K[x,y]/(xy))=I_x \cup I_y\subset \A^2_K, 
\]
where $x=u/w$, $y=v/w$ and $I_x=\{x=0\}$, $I_y=\{y=0\}$. 
Let $h \colon Y \to Y$ be the contraction morphism induced by the morphism of $K$-algebras:
\begin{align*}
K[x,y]/(xy) & \to K[x,y]/(xy) \\
(x,y) & \mapsto (x,0).
\end{align*}
It is clear that $h(Y)=h(I_y)=I_y=\Spec(K[x,y]/(y)) \simeq \A^1_K$. 
By, for example, the valuative criteria of properness (cf. \cite[Theorem II.4.7]{hartshorne}), there is a $K$-scheme morphism 
$f \colon X \to X$ extending $h$ and that  
\[
f((0 \colon 1 \colon 0))=f((1 \colon 0 \colon 0))=(1 \colon 0 \colon 0) \in L_v.
\]
Hence, $f(X)=f(L_v)=L_v$ and thus $\dim(f(X))=\dim(L_v)=1$. 
Clearly   $f$ is not surjective and thus $f^{(K)}$ is not surjective either.
\par 
Now let $A \coloneqq X(K)$ and let $\tau \colon A^G \to A^G$ be the   cellular automaton over 
$(G,X,K)$ with memory set $M=\{1_G\}$ and associated local defining map $f^{(K)}\colon A \to A$.
Observe that $\tau$ is not pre-injective since $f$ is not injective and $M = \{1_G\}$.
Also  $\tau$ is not $(*)$-pre-injective since $f(X)=f(L_v)=L_v$.  
On the other hand,    $\mdim_\FF (\tau(A^G))=1=\dim(X)$ and 
$\tau$ is $(**)$-pre-injective but not surjective.  
\end{example} 

The following example shows that 
 Theorem~\ref{t:main-theorem-2} and  Theorem~\ref{t:sur-dim} do not hold in general for irreducible non-complete algebraic varieties.
 
\begin{example}
Let $G$ be an amenable group and let $\FF$ be a   F\o lner net for $G$.
Let $K$ be an algebraically closed field. 
Let $X$ be an irreducible algebraic variety over $K$ and let $A \coloneqq X(K)$. 
Suppose that $f \colon X \to X$ is a  non-surjective dominant $K$-scheme morphism.
Observe that $f$ is not injective by the Ax-Grothendieck theorem. 
Let $\tau \colon A^G \to A^G$ be the algebraic  cellular automaton over $(G,X,K)$ with memory set $M=\{1_G\}$ and associated local defining map  $f^{(K)}\colon A \to A$. 
\par
Since $f$ is dominant, Chevalley's theorem and Proposition~\ref{p:dim-alg-var}.(vi) imply that  
$\dim(f(X))=\dim(X)$. 
As $K$ is algebraically closed, we have that 
\[
\dim(f(A))=\dim(f(X))=\dim(X). 
\] 
We deduce that  $\mdim_\FF (\tau(A^G))=\dim(X)$. 
It is  clear that $\tau$ is both $(*)$- and $(**)$-pre-injective.  
However, since $f$ is not surjective, $f^{(K)}$ is not surjective either 
(see e.g.~\cite[Lemma~2.22.(iv)]{cscp-alg-ca}). 
Hence, $\tau$ is not surjective as well.
Note also that $f^{(K)}$ is not injective since $f$ is not
(see e.g.~\cite[Lemma~2.22.(iii)]{cscp-alg-ca}).
It follows that  $\tau$ is not pre-injective. 
\par
Here is a class of such couples $(X,f)$. 
Let $X=\A^2=\Spec(K[x,y])$ be the affine plane over $K$ and consider the morphism 
$f\colon \A^2 \to \A^2$  given by the morphism of $K$-algebras
\begin{align*}
K[x,y] & \to K[x,y] \\
(x,y) & \mapsto (x^r,x^sP(y)),
\end{align*}
where $r,s \geq 1$ and $P\in K[y]$ is a non-constant polynomial in $y$. 
It is clear that $f(\A^2)=\A^2 \setminus (\{x=0\} \setminus \{(0,0)\})$. 
Hence $f$ is indeed a non-surjective dominant $K$-scheme morphism. 
This construction can be easily generalized to higher dimensional affine spaces $\A^n$ for $n\geq 2$ by using 
for example the morphisms of $K$-algebras given by
\begin{align*}
K[x_1,\dots,x_n] & \to K[x_1,\dots, x_n] \\
(x_1, \dots,x_n) & \mapsto (x_1^{r_1},x_1^{r_2}P_2(x_2), \dots, x_1^{r_n}P_n(x_n)),
\end{align*}
where $r_1,\dots, r_n \in \N^*$ and $P_2,\dots, P_n$ are nonconstant polynomials in $x_2, \dots ,x_n$ respectively. 
\end{example}

We give now an example with non-trivial minimal memory set showing that 
we cannot omit the hypothesis that $X$ is complete in Theorem~\ref{t:sur-dim}.

\begin{example}
In this example, we take  $G \coloneqq \Z$. Thus  $G$ is amenable. 
Let $X \coloneqq \A^1_K=\Spec(K[t])$ be the affine line over an algebraically closed field $K$ and let 
$A \coloneqq X(K)=K$. 
Let $\tau \colon A^G \to A^G$ be the cellular automaton over $(G,X,K)$ with memory set 
$M=\{0,1\}\subset \Z$ and associated local defining map given by $f^{(K)}\colon X(K)^K \to X(K)$,
 where $f \colon X^M \to X$ is the $K$-scheme morphism induced by the  morphism of 
 $K$-algebras
\begin{align*}
K[t] & \to K[x,y] \\
t & \mapsto xy
\end{align*}
Clearly   $\tau \colon K^\Z \to K^\Z$ is given by the formula: 
\[
\tau(c)(n)=c(n)c(n+1)  \quad \text{for all } c\in K^\Z \text{ and } n \in \Z.
\]
Consider the configuration $d \in K^\Z$ such that  $d(-1)=d(1)=1$ and $d(n)=0$ if $n \in \Z \setminus 
\{-1,1 \}$. 
If there were some configuration $c \in \K^\Z$ such that $\tau(c)=d$,
 then we would have $c(0)c(1)=d(0)=0$. 
This would imply $c(0)=0$ or $c(1)=0$ and hence $d(-1)=0$ or $d(1)=0$, which is a contradiction.   
We deduce that $d$ has no pre-image under $\tau$.
Thus $\tau$ is not surjective. 
\par
We claim that $\mdim(\tau(K^\Z))=1=\dim(X)$. 
Indeed, let $\Gamma \coloneqq \tau(K^\Z)$. 
For each $m \in \N$, let $F_m  \coloneqq [-m, m] \cap \Z \subset \Z$. 
Then $\FF \coloneqq (F_m)_m$ is a  F\o lner sequence for $\Z$. 
Note that $F_m^{+}= [-m, m+1] \cap \Z$.
Consider the $K$-scheme morphism (cf. Lemma~\ref{l:local-map})
\[
f^+_{F_m} \colon X^{F_m^{+}} =\A^{2m+2} \to X^{F_m}=\A^{2m+1}.
\] 
Then $\tau^+_{F_m}=f_{F_m}^{+(K)}$. 
It is immediate that 
\[
\tau^+_{F_m}(c_{-m}, \dots, c_{m+1})=(c_{-m}c_{-m+1}, \dots, c_m c_{m+1}).
\]  
We deduce that the image of $f^+_{F_m}$ contains $\A^{2m+1} \setminus L$ where 
\[
L=V((x_{-m}\dots x_m)) \subset \A^{2m+1}=\Spec(K[x_{-m}, \dots, x_m])
\]
 is the union of the $2m+1$ coordinate hyperplanes given by the equation $x_{-m}\dots x_m=0$. 
Therefore, we have $\dim(\im(f^+_{F_m}))=\dim(\A^{2m+1})$ and thus $\dim(\Gamma_{F_m})=\vert F_m \vert =2m+1$ for all $m \in \N$. 
Hence, we conclude that 
\[
\mdim_\FF(\Gamma)=\limsup_m \frac{\dim(\Gamma_{F_m})}{\vert F_m \vert}=1=\dim(A)
\]
 as claimed.  
Since $d$ is almost equal to the configuration $0 \in K^\Z$ and $\tau(d)=\tau(0)=0$, we see that $\tau$ is not pre-injective. 
It follows from Proposition \ref{p:mdim-implies-*-pre} that $\tau$ is $(*)$-pre-injective. 
\end{example}

The following example shows that
the hypothesis that $G$ is amenable cannot be omitted in 
Theorem~\ref{t:main-theorem-1}. 

\begin{example}
\label{ex:free-not-myhill}
Let $G=F_2$ be the free group of rank $2$ based on the generators $a,b$.
We recall that  $G$ is residually finite but not amenable. 
Let $M \coloneqq \{a,b,a^{-1}, b^{-1}\} \subset F_2$.  
Consider an abelian variety $Y=(Y,+)$ over an algebraically closed field $K$ 
with indentity element $e\in Y(K)$.  
We suppose that $Y$ is non-trivial, so that  $\dim(Y)\geq 1$. 
The $K$-fibered product $X \coloneqq  Y \times_K Y$ is also  a non-trivial abelian variety over $K$.
The set  $A \coloneqq X(K)=Y(K)\times Y(K)$ of $K$-points of $X$ 
is a non-trivial abelian group.  
For $i=1,2$ let $q^i \colon Y^M \times_K Y^M \to Y^M$ be the $i$-th projection 
and for $g \in M$, let $q_g\colon Y^M \to Y$ be the projection on the $g$-factor. 
For $i=1,2$ and $g\in M$, let $p^i_g \colon X^M=Y^M \times_K Y^M \to Y$ 
be the projection defined by $p^i_g \coloneqq q_g \circ q^i$. 
Let $h \colon X^M \to Y$ be the morphism defined by 
\[
h\coloneqq p^1_a +p^1_{a^{-1}}+p^2_b+p^2_{b^{-1}}.
\]
Let $\iota\coloneqq (\Id_Y, e) \colon Y= Y \times_K \Spec(K) \to X=Y \times_K Y$. 
Finally, we define $f \coloneqq \iota \circ h \colon X^M \to X$. 
\par
Let $\tau \colon A^G \to A^G$ be the algebraic cellular automaton over $(G,X,K)$ with memory set $M$ and associated local defining map $\mu= f^{(K)} \colon A^M \to A$. 
Observe that for all $c\in A^G$:
\[
\tau(c)(g)=\left( \pi_1(c(ga))+ \pi_1(c(ga^{-1}))+ \pi_2(c(gb)) + \pi_2(c(gb^{-1})), e\right), 
\]
where $\pi_i \colon A \to A$ 
is given by $\pi_i(u_1,u_2)=(u_i,e)$ for $i=1,2$ and $(u_1, u_2) \in Y(K)\times Y(K)=A$. 
By \cite[Proposition~5.11]{ca-and-groups-springer}, we see that $\tau$ is pre-injective but not surjective.  
\end{example} 

The following example shows in particular that Theorem \ref{t:main-theorem-3} yields another characterization of group amenability. 

\begin{example}
Let $G$ be a group and let $K$ be an algebraically closed field.    
Let $X \coloneqq \A^n_K$ and $A \coloneqq X(K)=K^n$.   
Suppose that $\tau \colon A^G \to A^G$ is a $K$-linear cellular automaton (cf.\ \cite[Section~8.1]{ca-and-groups-springer}).  
Clearly $\tau$ is an algebraic cellular automaton over $(G,X,K)$. 
\par
We claim that $\tau$ is pre-injective if and only if
it is $(*)$-pre-injective.
Suppose that $\tau$ is not pre-injective.  
Hence there exists a configuration $c\in A^G$ with a nonempty finite support $\Omega \subset G$ such that $\tau(c)=0$
(cf.\ \cite[Proposition 8.2.5]{ca-and-groups-springer}).  
Let $M \subset G$ be a memory set for $\tau$ such that $1_G \in M$ and $M = M^{-1}$. 
Set $\Omega^{++} \coloneqq (\Omega^+)^+ = \Omega M^2$ (cf.\ Subsection \ref{ss:interiors}).
Recall that $\Omega \subset \Omega^{+} \subset \Omega^{++}$ since $1_G \in M$.
Let $H$ be a linear hyperplane in $A^{\Omega^{++}}$ not containing $c\vert_{\Omega^{++}}$, so that $A^{\Omega^{++}} = 
H \oplus Kc\vert_{\Omega^{++}}$. 
Let us show that 
\begin{equation}
\label{e:tau-omega-p-egale-tau-H-p}
\tau((A^{\Omega^{++}})_p)=\tau(H_p) \ \ \ \mbox{ for all } p \in A^{G\setminus \Omega^{++}}.
\end{equation}
Let $p\in A^{G\setminus \Omega^{++}}$. 
Since $H_p \subset (A^{\Omega^{++}})_p$, we only need to show the inclusion $\tau((A^{\Omega^{++}})_p) \subset \tau(H_p)$. 
Let $d \in (A^{\Omega^{++}})_p$. Then we can find $h \in H$ and $k \in K$ such that $d\vert_{A^{\Omega^{++}}} = h + kc\vert_{\Omega^{++}}$. 
Let $h_p \in H_p$ be the unique configuration such that $(h_p)\vert_{A^{\Omega^{++}}} = h$ and $(h_p)\vert_{G \setminus A^{\Omega^{++}}} = p$,
and let us show that $\tau(d) = \tau(h_p)$.
Let $g \in G$ and suppose first that $g \in \Omega^+$ so that $gM \subset \Omega^{++}$. 
Then $d \vert_{gM} = (h_p + kc)\vert_{gM}$ and therefore
\[
\tau(d)(g) = \tau(h_p + kc)(g) = \tau(h_p)(g) + k\tau(c)(g) = \tau(h_p)(g),
\]
where the last equality follows from the fact that $\tau(c) = 0$.
Suppose now that $g \in G \setminus \Omega^+$. Then $gM \cap \Omega = \varnothing$ (here we use $M = M^{-1}$) so that 
$d\vert_{gM} = (h_p)\vert_{gM}$ and therefore
\[
\tau(d)(g) =  \tau(h_p)(g).
\]
Thus $\tau(d) = \tau(h_p)$, and the inclusion $\tau((A^{\Omega^{++}})_p) \subset \tau(H_p)$ follows. 
This shows \eqref{e:tau-omega-p-egale-tau-H-p}.
Now, since $H$ is a proper closed subset of $A^{\Omega^{++}}$, we deduce that $\tau$ is not $(*)$-pre-injective.
Since, conversely, every pre-injective algebraic cellular automaton is $(*)$-pre-injective by Proposition \ref{p:pre-injectivity-*}, 
this proves our claim.
\par 
Suppose now that $G$ is non-amenable. By a result of Bartholdi \cite[Corollary 1.5]{bartholdi} there exists an integer $n \geq 1$ and
a surjective $K$-linear cellular automaton $\tau \colon A^G \to A^G$ that is not pre-injective (and hence not $(*)$-pre-injective).
As $X = \A^n_K$ is irreducible, this shows that Theorem \ref{t:main-theorem-3} becomes false if the group $G$ is non-amenable.
\end{example}

\section{Questions} 

\begin{question}
Can we remove the hypothesis that $X$ is complete (respectively, irreducible) in Theorem~\ref{t:main-theorem-1}?
\end{question}

\begin{question}
Does Theorem~\ref{t:main-theorem-3} 
still hold without the assumption that $X$ is irreducible?
\end{question}

\begin{question}
Does Theorem \ref{t:base-change-surj} remain valid without the amenability hypothesis on the group $G$?
\end{question}

\begin{question}
Does Theorem~\ref{t:main-theorem-1} characterize amenable groups?
\end{question}

\bibliographystyle{siam}

\begin{thebibliography}{10}

\bibitem{an-rigid}
{\sc J.~An}, {\em Rigid geometric structures, isometric actions, and algebraic
  quotients}, Geom. Dedicata, 157 (2012), pp.~153--185.

\bibitem{bartholdi}
{\sc L.~Bartholdi}, {\em Cellular automata, duality and sofic groups}, 
New York J. Math. 23 (2017), pp.~1417--1425.

\bibitem{csc-garden-linear}
{\sc T.~Ceccherini-Silberstein and M.~Coornaert}, {\em The {G}arden of {E}den
  theorem for linear cellular automata}, Ergodic Theory Dynam. Systems, 26
  (2006), pp.~53--68.

\bibitem{ca-and-groups-springer}
\leavevmode\vrule height 2pt depth -1.6pt width 23pt, {\em Cellular automata
  and groups}, Springer Monographs in Mathematics, Springer-Verlag, Berlin,
  2010.

\bibitem{cscp-alg-ca}
{\sc T.~Ceccherini-Silberstein, M.~Coornaert, and X.~K. Phung}, {\em On
  injective cellular automata over schemes}, arXiv:1712.05716.

\bibitem{ceccherini}
{\sc T.~Ceccherini-Silberstein, A.~Mach{\`{\i}}, and F.~Scarabotti}, {\em
  Amenable groups and cellular automata}, Ann. Inst. Fourier (Grenoble), 49
  (1999), pp.~673--685.

\bibitem{gorz}
{\sc U.~G\"ortz and T.~Wedhorn}, {\em Algebraic geometry {I}}, Advanced
  Lectures in Mathematics, Vieweg + Teubner, Wiesbaden, 2010.
\newblock Schemes with examples and exercises.

\bibitem{gromov-esav}
{\sc M.~Gromov}, {\em Endomorphisms of symbolic algebraic varieties}, J. Eur.
  Math. Soc. (JEMS), 1 (1999), pp.~109--197.

\bibitem{grothendieck-ega-2}
{\sc A.~Grothendieck}, {\em \'{E}l\'ements de g\'eom\'etrie alg\'ebrique. {II}.
  \'{E}tude globale \'el\'ementaire de quelques classes de morphismes}, Inst.
  Hautes \'Etudes Sci. Publ. Math.,  (1961), p.~222.

\bibitem{grothendieck-20-1964}
\leavevmode\vrule height 2pt depth -1.6pt width 23pt, {\em \'{E}l\'ements de
  g\'eom\'etrie alg\'ebrique. {IV}. \'{E}tude locale des sch\'emas et des
  morphismes de sch\'emas. {I}}, Inst. Hautes \'Etudes Sci. Publ. Math.,
  (1964), p.~259.

\bibitem{ega-4-3}
\leavevmode\vrule height 2pt depth -1.6pt width 23pt, {\em \'{E}l\'ements de
  g\'eom\'etrie alg\'ebrique. {IV}. \'{E}tude locale des sch\'emas et des
  morphismes de sch\'emas. {III}}, Inst. Hautes \'Etudes Sci. Publ. Math.,
  (1966), p.~255.

\bibitem{grothendieck-ega-1}
\leavevmode\vrule height 2pt depth -1.6pt width 23pt, {\em \'{E}l\'ements de
  g\'eom\'etrie alg\'ebrique. {I}}, vol.~166 of Grundlehren der Mathematischen
  Wissenschaften [Fundamental Principles of Mathematical Sciences],
  Springer-Verlag, Berlin, 1971.

\bibitem{harris}
{\sc J.~Harris}, {\em Algebraic geometry}, vol.~133 of Graduate Texts in
  Mathematics, Springer-Verlag, New York, 1995.
\newblock A first course, Corrected reprint of the 1992 original.

\bibitem{hartshorne}
{\sc R.~Hartshorne}, {\em Algebraic geometry}, Springer-Verlag, New York, 1977.
\newblock Graduate Texts in Mathematics, No. 52.

\bibitem{liu-alg-geom}
{\sc Q.~Liu}, {\em Algebraic geometry and arithmetic curves}, vol.~6 of Oxford
  Graduate Texts in Mathematics, Oxford University Press, Oxford, 2002.
\newblock Translated from the French by Reinie Ern\'e, Oxford Science
  Publications.

\bibitem{moore}
{\sc E.~F. Moore}, {\em Machine models of self-reproduction}, vol.~14 of Proc.
  Symp. Appl. Math., American Mathematical Society, Providence, 1963,
  pp.~17--34.

\bibitem{myhill}
{\sc J.~Myhill}, {\em The converse of {M}oore's {G}arden-of-{E}den theorem},
  Proc. Amer. Math. Soc., 14 (1963), pp.~685--686.

\bibitem{vakil}
{\sc R.~Vakil}, {\em {\itshape MATH 216: Foundations of Algebraic Geometry}}.

\end{thebibliography}

\end{document}